%
%
%
%
\documentclass{amsart}

\usepackage{amsmath,amssymb,amsfonts}
\newtheorem{theorem}{Theorem}[section]
\newtheorem{lemma}[theorem]{Lemma}

\theoremstyle{definition}
\newtheorem{definition}[theorem]{Definition}
\newtheorem{example}[theorem]{Example}
\newtheorem{coro}[theorem]{Corollary}
\newtheorem{conj}[theorem]{Conjecture}

\theoremstyle{remark}
\newtheorem{remark}[theorem]{Remark}

\numberwithin{equation}{section}



\def \v{\mathbf{v}}
\def \u{\mathbf{u}}
\def \i{\mathbf{i}}
\def \A{\mathcal{A}}
\def \L{\mathcal{L}}
\def \Q{\mathcal{Q}}

\def \la{\lambda}
\def \I{\mathcal{I}}
\def \F{\mathbf{F}}
\def \W{\mathbf{W}}
\def \Dg{\mathfrak{D}}

\def \S{\mbox{Spec}}
\def \Tr{\mbox{Tr}}
\begin{document}

\title[Spectral Symmetry of Nonnegative Tensors and Hypergraphs]{The Spectral Symmetry of Weakly Irreducible Nonnegative Tensors and Connected Hypergraphs}

\author[Y.-Z. Fan]{Yi-Zheng Fan}
\address{School of Mathematical Sciences, Anhui University, Hefei 230601, P. R. China}
\email{fanyz@ahu.edu.cn}
\thanks{The first author is the corresponding author.
The first and the second authors were supported by National Natural Science Foundation of China \#11371028.
The third author was supported by National Natural Science Foundation of China \#11401001.}

\author[T. Huang]{Tao Huang}
\address{School of Mathematical Sciences, Anhui University, Hefei 230601, P. R. China}
\email{huangtao@ahu.edu.cn}

\author[Y.-H. Bao]{Yan-Hong Bao}
\address{School of Mathematical Sciences, Anhui University, Hefei 230601, P. R. China}
\email{baoyh@ahu.edu.cn}

\author[C.-L. Zhuan-Sun]{Chen-Lu Zhuan-Sun}
\address{School of Mathematical Sciences, Anhui University, Hefei 230601, P. R. China}
\email{zhuansuncl@163.com}

\author[Y.-P. Li]{Ya-Ping Li}
\address{School of Mathematical Sciences, Anhui University, Hefei 230601, P. R. China}
\email{18856961415@163.com}

\subjclass[2000]{Primary 15A18, 05C65; Secondary 13P15, 05C15}



\keywords{Tensor, hypergraphs, adjacency tensor, spectral symmetry, coloring}

\begin{abstract}
Let $\A$ be a weakly irreducible nonnegative tensor with spectral radius $\rho(\A)$.
Let $\Dg$ (respectively, $\Dg^{(0)}$) be the set of normalized diagonal matrices
  arising from the eigenvectors of $\A$ corresponding to the eigenvalues with modulus $\rho(\A)$ (respectively, the eigenvalue $\rho(\A)$).
It is shown that $\Dg$ is an abelian group containing $\Dg^{(0)}$ as a subgroup, which acts transitively on the set $\{e^{\i \frac{2 \pi j}{\ell}}\A:j =0,1, \ldots,\ell-1\}$,
   where $|\Dg/\Dg^{(0)}|=\ell$ and $\Dg^{(0)}$ is the stabilizer of $\A$.
The spectral symmetry of $\A$ is characterized by the group $\Dg/\Dg^{(0)}$, and $\A$ is called spectral $\ell$-symmetric.
 We obtain the structural information of $\A$ by analyzing the property of $\Dg$, especially for connected hypergraphs we get
some results on the edge distribution and coloring.
If moreover $\A$ is symmetric, we prove that $\A$ is spectral $\ell$-symmetric if and only if it is $(m,\ell)$-colorable.
We characterize the spectral $\ell$-symmetry of a tensor by using its generalized traces,
and show that for an arbitrarily given integer $m \ge 3$ and each positive integer $\ell$ with $\ell \mid m$, there always exists an $m$-uniform hypergraph $G$ such that $G$ is spectral $\ell$-symmetric.
\end{abstract}

\maketitle


\section{Introduction}
A real {\it tensor} (also called {\it hypermatrix}) $\A=(a_{i_{1} i_2 \ldots i_{m}})$ of order $m$ and dimension $n$ refers to a
  multidimensional array with entries $a_{i_{1}i_2\ldots i_{m}}\in \mathbb{R}$
  for all $i_{j}\in [n]:=\{1,2,\ldots,n\}$ and $j\in [m]$.
Surely, if $m=2$, then $\A$ is a square matrix of dimension $n$.
A {\it subtensor} of $\A$ is a multidimensional array with entries $a_{i_{1}i_2\ldots i_{m}}$ such that $i_j \in S_j \subseteq [n]$ for some $S_j$'s and $j \in [m]$,
denoted by $\A[S_1|S_2|\cdots|S_m]$.
Let $\rho(\A)$ be the spectral radius of $\A$, and $\S(\A)$ be the spectrum of $\A$.
The circle centered at the origin of the complex plane with radius $\rho(\A)$ is called the {\it spectral circle} of $\A$.

By the famous Perron-Frobenius theorem, for a nonnegative irreducible matrix $\A$ of dimension $n$, if it has $k$ eigenvalues with modulus
 $\rho(\A)$, then those $k$ eigenvalues are equally distributed on the spectral circle, i.e.
 they are $\rho(\A)e^{\i \frac{2 \pi j}{k}}$, $j =0,1, \ldots, k-1$.
Furthermore, the spectrum of $\A$ keeps invariant under a rotation of angle $\frac{2 \pi }{k}$ of the complex plane, i.e. $\S(A)=e^{\i \frac{2 \pi}{k}}\S(A)$.
Under the above spectral symmetry, $\A$ has a cyclic structure via a permutation matrix $P$, i.e.
\[\label{mat-stru}
P^T \A P =\left[\begin{array}{ccccc}
O & \A_{12} & O & \cdots & O\\
O & O & \A_{23} & \cdots & O \\
\vdots & \vdots & \vdots & \ddots & \vdots\\
O & O & O & \cdots  & \A_{k-1,1}\\
\A_{k1} & O & O & \cdots & O
\end{array}\right],
\]
where the diagonal blocks are all square zero matrices of suitable sizes.
Equivalently, we have a partition $[n]=V_1 \cup \cdots \cup V_k$ such that if $j \not\equiv i+1 \mod k$, then
\begin{equation} \label{mat-stru1}
\A[V_i|V_j]=0.
\end{equation}

The Perron-Frobenius theorem for nonnegative tensor is established by Chang et.al \cite{CPZ}, Friedland et.al \cite{FGH} and Yang et.al \cite{YY,YY2,YY3}.
From those work, the spectral symmetry of an irreducible or weakly irreducible nonnegative tensor $\A$ is investigated.
The eigenvalues of $\A$ with modulus $\rho(\A)$ are also equally distributed on the spectral circle.
However, the structure of $\A$ receives little attention.
Does $\A$ has a similar structure to that in (\ref{mat-stru1})?
The key difference is that an irreducible nonnegative matrix has a unique (positive) eigenvector corresponding to the spectral radius up to a scalar (called {\it Perron vector}), but an irreducible or weakly irreducible nonnegative tensor may have more than one eigenvector corresponding to the spectral radius up to a scalar.

In order to obtain the structural information of weakly irreducible nonnegative tensors, we start from the discussion of spectral symmetry of tensors.

\begin{definition}\label{ell-sym}
Let $\A$ be an $m$-th order $n$-dimensional tensor, and let $\ell$ be a positive integer.
The tensor $\A$ is called spectral $\ell$-symmetric if
\begin{equation}\label{sym-For}\S(\A)=e^{\i \frac{2\pi}{\ell}}\S(\A).\end{equation}
\end{definition}

Suppose that $\A$ is spectral $\ell$-symmetric.
The maximum number $\ell$ such that (\ref{sym-For}) holds is called the {\it cyclic index} of $\A$ and denoted by $c(\A)$ \cite{CPZ2}, and $\A$ is called
{\it spectral $c(\A)$-cyclic}.
Obviously, if $\A$ is spectral $k$-cyclic, it is spectral $k$-symmetric; and for any positive integer $\ell$ such that $\ell | k$, it is also spectral $\ell$-symmetric.
In particular, if $\A$ is spectral $2$-symmetric, then $\la$ is an eigenvalue of $\A$ if and only $-\la$ is an eigenvalue of $\A$;
in this case, we say $\A$ has a {\it symmetric spectrum}.
If $c(\A)=1$, then $\A$ is spectral $1$-cyclic, and is also said {\it spectral nonsymmetric}.

If a nonnegative tensor $\A$ holds one of the following properties: (1) $\A$ is positive \cite{YY}; (2) $\A$ is primitive \cite{CPZ2}; (3) $A$ is irreducible \cite{YY2} or weakly irreducible \cite{YY3} with positive trace, then $\A$ is spectral nonsymmetric.
Nikiforov \cite{Ni} characterize a symmetric weakly irreducible nonnegative tensor with a symmetric spectrum by introducing the odd-coloring of a tensor,
  where an {\it odd-coloring} is exactly an $(m,2)$-coloring in the following definition for $m$ being even.

\begin{definition}\label{spe-ell-sym}
Let $m \ge 2$ and $\ell \ge 2$ be integers such that $ \ell \mid  m$.
An $m$-th order $n$-dimensional tensor $\A$ is called $(m,\ell)$-colorable
if there exists a map $\phi:[n] \to [m]$ such that if $a_{i_1\ldots i_m} \ne 0$, then
\begin{equation}\label{gen-col} \phi(i_1)+\cdots+\phi(i_m) \equiv \frac{m}{\ell} \mod m.\end{equation}
Such $\phi$ is called an $(m,\ell)$-coloring of $\A$.
\end{definition}

By the results in \cite{YY2,YY3}, for a weakly irreducible nonnegative tensor $\A$ of order $m$, if $\A$ is spectral $\ell$-symmetric, then there exists a diagonal matrix $D$ such that $\A=e^{-\i \frac{2\pi}{\ell}}D^{-(m-1)}\A D$, where $D$ is constructed from an eigenvector corresponding to the eigenvalue $\rho(A)e^{\i \frac{2\pi}{\ell}}$.
Let
\begin{equation} \label{Dg}
\begin{split}
\Dg &=\cup_{j=0}^{\ell-1}\Dg^{(j)}, \\
\Dg^{(j)}& =\{D: \A=e^{-\i \frac{2\pi j}{\ell}}D^{-(m-1)}\A D, d_{11}=1\}, j=0,1,\ldots,\ell-1.
\end{split}
\end{equation}
In the case of $\A$ being a matrix, for $j=0,1,\ldots,\ell-1$, each $ \Dg^{(j)}$ contains only one element,
and $\Dg$ is a group of order $\ell$ under the usual matrix multiplication.
However, in the general case of $\A$ being a tensor, each $ \Dg^{(j)}$ may have more than one element, and contains more rich content.
We show that $\Dg$ is a finite abelian group containing $\Dg^{(0)}$ as a subgroup.
Let $S=\{e^{\i \frac{2 \pi i}{\ell}}\A: i=0,1,\ldots,\ell-1\}$.
Then $\Dg$ acts on $S$ as a permutation group,
where $\Dg^{(0)}$ acts as a stabilizer of $\A$, and the quotient group $\Dg/\Dg^{(0)}$ acts as a rotation over $S$.
The spectral symmetry of $\A$ is characterized by $\Dg/\Dg^{(0)}$.

In this paper we mainly investigate the structure of a weakly irreducible nonnegative tensor $\A$ by the group $\Dg$.
The paper divides into two parts.
First we give some properties of $\Dg$ defined in (\ref{Dg}), and then obtain the structural information of $\A$ similar to (\ref{mat-stru1})
   with application to the edge distribution and coloring of connected hypergraphs.
Consequently, we prove that a symmetric weakly irreducible nonnegative tensor is spectral $\ell$-symmetric
  if and only if it is $(m,\ell)$-colorable, which generalizes the result of Nikiforov \cite{Ni}.
In the second part, we characterize the spectral $\ell$-symmetry and the cyclic index of a tensor by its generalized traces,
  which generalizes the result of Shao et.al \cite{SQH}.
We also prove that for an arbitrarily given integer $m \ge 3$ and for each positive integer $\ell$ such that $\ell \mid m$,
there always exists an $m$-uniform hypergraph $G$ such that its adjacency tensor is spectral $\ell$-symmetric.

\section{Preliminaries}
\subsection{Notions}
Let $\A$ be a real tensor of order $m$ and dimension $n$.
The tensor $\A$ is called \textit{symmetric} if its entries are invariant under any permutation of their indices.
 Given a vector $x\in \mathbb{R}^{n}$, $\A x^{m} \in \mathbb{R}$ and $\A x^{m-1} \in \mathbb{R}^n$, which are defined as follows:
   \[
   \begin{split} \A x^{m}&=\sum_{i_1,i_{2},\ldots,i_{m}\in [n]}a_{i_1i_{2}\ldots i_{m}}x_{i_1}x_{i_{2}}\cdots x_{i_m},\\
   (\A x^{m-1})_i &=\sum_{i_{2},\ldots,i_{m}\in [n]}a_{ii_{2}\ldots i_{m}}x_{i_{2}}\cdots x_{i_m}, i \in [n].
   \end{split}
   \]
 Let $\mathcal{I}=(i_{i_1i_2\ldots i_m})$ be the {\it identity tensor} of order $m$ and dimension $n$, that is, $i_{i_{1}i_2 \ldots i_{m}}=1$ if and only if
   $i_{1}=i_2=\cdots=i_{m} \in [n]$ and $i_{i_{1}i_2 \ldots i_{m}}=0$ otherwise.

\begin{definition}{\em \cite{Qi2}} Let $\A$ be an $m$-th order $n$-dimensional real tensor.
For some $\lambda \in \mathbb{C}$, if the polynomial system $(\lambda \mathcal{I}-\A)x^{m-1}=0$, or equivalently $\A x^{m-1}=\lambda x^{[m-1]}$, has a solution $x\in \mathbb{C}^{n}\backslash \{0\}$,
then $\lambda $ is called an eigenvalue of $\A$ and $x$ is an eigenvector of $\A$ associated with $\lambda$,
where $x^{[m-1]}:=(x_1^{m-1}, x_2^{m-1},\ldots,x_n^{m-1})$.
\end{definition}

If $x$ is a real eigenvector of $\A$, surely the corresponding eigenvalue $\lambda$ is real.
In this case, $\lambda$ is called an {\it $H$-eigenvalue} of $\A$.
The {\it spectral radius of $\A$} is defined as
\[
\rho(\A)=\max\{|\lambda|: \lambda \mbox{ is an eigenvalue of } \A \}.
\]

\begin{definition}[\cite{CPZ}]
A tensor $\A=(a_{i_{1}i_2\ldots i_{m}})$ of order $m$ and dimension $n$ is called reducible if there exists a nonempty proper index subset $I \subset [n]$ such that
$a_{i_{1}i_2\ldots i_{m}}=0$ for any $i_1 \in I$ and any $i_2,\ldots,i_m \notin I$. If $\A$ is not reducible, then it is called irreducible.
\end{definition}

For a tensor $\A$ of order $m$ and dimension $n$, we associate it with a directed graph $D(\A)$ on vertex set $[n]$ such that $(i,j)$ is an arc of $D(\A)$ if
and only if there exists a nonzero entry $ a_{ii_2\ldots i_{m}}$ such that $j \in \{ i_2\ldots i_{m}\}$.
The tensor $\A$ is called {\it weakly irreducible} if $D(\A)$ is strongly connected; otherwise it is called {\it weakly reducible} \cite{FGH}.
It is known that if $\A$ is irreducible, then it is weakly irreducible; but the converse is not true.

A {\it hypergraph} $G=(V(G),E(G))$ consists of a vertex set  $V(G)=\{v_1,v_2,\ldots,v_n\}$ and an edge set $E(G)=\{e_{1},e_2,\ldots,e_{l}\}$
  where $e_{j}\subseteq V(G)$ for $j \in [l]$.
If $|e_{j}|=m$ for each $j\in [l]$, then $G$ is called an {\it $m$-uniform} hypergraph.
In particular, the $2$-uniform hypergraphs are exactly the classical simple graphs.
The {\it degree} $d_G(v)$ or simply $d(v)$ of a vertex $v \in V(G)$ is defined as $d(v)=|\{e_{j}:v\in e_{j}\in E(G)\}|$.
A {\it walk} $W$ in $G$ is a sequence of alternate vertices and edges: $v_{0},e_{1},v_{1},e_{2},\ldots,e_{l},v_{l}$,
    where $\{v_{i},v_{i+1}\}\subseteq e_{i+1}$ for $i=0,1,\ldots,l-1$.
The hypergraph $G$  is {\it connected} if every two vertices of $G$ are connected by a walk,
  and is called {\it $k$-colorable} if there exists a map $\phi: V(G) \to [k]$ such that each edge contains at least two vertices with different colors,
or equivalently, the vertices can be partitioned into $k$ subsets such that each edge intersects at least two subsets.
The {\it chromatic number} $\chi(G)$ is the smallest $k$ such that $G$ is $k$-colorable.
The {\it adjacency tensor} $\A(G)$ of the hypergraph $G$ is defined as $\mathcal{A}(G)=(a_{i_{1}i_{2}\ldots i_{m}})$, an $m$-th order $n$-dimensional tensor, where
\[a_{i_{1}i_{2}\ldots i_{m}}=\left\{
 \begin{array}{ll}
\frac{1}{(m-1)!}, &  \mbox{if~} \{v_{i_{1}},v_{i_{2}},\ldots,v_{i_{m}}\} \in E(G);\\
  0, & \mbox{otherwise}.
  \end{array}\right.
\]
  In this paper, the eigenvalues of a hypergraph $G$ always refer to those of its adjacency tensor.
 Let $\mathcal{D}(G)$ be an $m$-th order $n$-dimensional diagonal tensor,
    where $d_{i\ldots i}=d(v_i)$ for $i \in [n]$.
The tensor $\L(G)=\mathcal{D}(G)-\A(G)$ is called the {\it Laplacian tensor} of $G$,
and $\Q(G)=\mathcal{D}(G)+\A(G)$ is called the {\it signless Laplacian tensor} of $G$.

Observe that the adjacency (Laplacian, signless Laplacian) tensor of a hypergraph $G$ is symmetric,
  and it is weakly irreducible if and only if the $G$ is connected \cite{PZ,YY3}.
However, even if $G$ is connected, the tensor $\A(G)$ ($\L(G)$, $\Q(G)$) is always reducible when $m \ge 3$;
for taking an arbitrary proper subset $I \subset [n]$ with cardinality not less than $n-(m-2)$, we always have $a_{i_{1}i_{2}\ldots i_{m}}=0$
for all $i_1 \in I$ and all $i_2,\ldots,i_m \notin I$ since there must exist repeated indices among $i_2,\ldots,i_m$.

Let $I_X$ be the indicator function of a set $X \subset [n]$, and let $\A$ be a tensor of order $m$ and dimension $n$.
A set $X \subset [n]$ is called an {\it odd transversal} of $\A$ if $a_{i_1\ldots i_m} \ne 0$ implies that
\[I_X(i_1)+\cdots+I_X(i_m) \equiv 1 \mod 2.\]
A tensor $\A$ with an odd transversal is called an {\it odd transversal tensor} \cite {Ni}.
Odd transversal tensors were also named {\it weakly odd-bipartite tensors} by Chen an Qi \cite{CQ},
 and were called {\it odd-bipartite tensors} in case of $m$ being even.

When we say a hypergraph is {\it spectral $\ell$-symmetric} (or {\it spectral $\ell$-cyclic}, {\it $(m,\ell)$-colorable}, {\it odd-colorable}, {\it odd transversal}),
it always referred to its adjacency tensor.
An even uniform hypergraph is called  {\it odd-bipartite} if its vertices can be partitioned into two subsets
   such that every edge intersects with each subset with an odd number of vertices.

Nikiforov \cite{Ni} proved that if $m$ is even, then an $m$-th order tensor with an odd transversal is always odd-colorable.
Furthermore, if $m \equiv 2 \mod 4$, then these two notions are equivalent.
However, if $m \equiv 0 \mod 4$, they construct two classes of $m$-uniform hypergraphs to illustrate that they are odd-colorable but not odd transversal.
We also construct a class of non-odd-bipartite generalized power hypergraphs to illustrate the above fact \cite{FKT}.

Finally we introduce some special classes of hypergraphs.
An $m$-uniform hypergraph $G$ is called {\it $p$-\hbox{hm} bipartite} if the vertex set  has a bipartition $V(G)=V_1 \cup V_2$ such that
each edge of $G$ intersects $V_1$ with exactly $p$ vertices \cite{SQH}.
The notion of $p$-hm hypergraphs generalizes the hm-bipartite hypergraphs \cite{HQ} (i.e. $1$-hm hypergraphs), and $m$-partite hypergraphs \cite{CD}.
A {\it cored hypergraph} is one such that each edge contains one vertex of degree one \cite{HQS}, which is also hm-bipartite.
An {\it  $m$-th power} of a simple graph $G$, denoted by $G^m$, is obtained from $G$ by replacing each edge (a $2$-set) with a $k$-set by adding $(k-2)$ additional vertices \cite{HQS},
 which is a cored hypergraph and hence hm-bipartite.
A {\it generalized power hypergraph} $G^{m,s}$  is constructed from a simple graph $G$ by Khan and Fan \cite{KF} and from a hypergraph $G$ by Kang et.al \cite{KLQY}.
In particular, if $t |m$, then $G^{m,\frac{m}{t}}$ is simply obtained from $G$ by blowing up each vertex into an $\frac{m}{t}$-set.

\begin{definition}[\cite{KLQY}] \label{powerhyper}
Let $G=(V,E)$ be a $t$-uniform hypergraph. For any integers $m,s$ such that $m > t$ and $1 \le s \le \frac{m}{t}$, the generalized power of $G$, denoted by $G^{m,s}$, is defined as the $m$-uniform hypergraph with the vertex set $(\cup_{v \in V} \v) \cup (\cup_{e \in E}\mathbf{e})$, and the edge set
$\{\u_1 \cup \cdots \cup \u_t \cup \mathbf{e}: e=\{u_1,\ldots,u_t\} \in E(G)\}$,
where $\v$ denotes an $s$-set corresponding to $v$ and $\mathbf{e}$ denotes an $(m-ts)$-set corresponding to $e$, and all those sets are pairwise disjoint.
\end{definition}

\subsection{Characteristic polynomial of tensors}
Let $\A$ be an $m$-th order $n$-dimensional real tensor.
The {\it determinant } of $\A$, denoted by $\det \A$, is defined as the resultant of the polynomials $\A x^{m-1}$,
and the {\it characteristic polynomial } $\varphi_\A(\la)$ of $\A$ is defined as $\det(\la \I-\A)$ \cite{CPZ2,Qi2}.
It is known that $\la$ is an eigenvalue of $\A$ if and only if it is a root of $\varphi_\A(\la)$.
The {\it algebraic multiplicity} of $\la$ as an eigenvalue of $\A$  is defined as the multiplicity of $\la$ as a root of $\varphi_\A(\la)$.
The {\it spectrum} of $\A$, denoted by $\S(\A)$, is the multi-set of the roots of $\varphi_\A(\la)$, including multiplicity.
Denote by $H\S(\A)$ the set of distinct $H$-eigenvalues of $\A$.

Morozov and Shakirov \cite{MS} give a formula for calculating $\det (\I-\A)$ using Schur polynomials in the generalized traces of $\A$.
Let $A$ be an auxiliary matrix of order $n$ with distinct variables $a_{ij}$ as entries.
The {\it generalized $d$-th order trace} of $\A$ is defined by
\[ \label{trace}\Tr_d(\A)=(m-1)^{n-1}\sum_{d_1+\cdots+d_n=d}\prod_{i=1}^n \frac{1}{(d_i(m-1))!}\left(\sum_{y \in [n]^{m-1}}t_{iy}\frac{\partial }{\partial a_{iy}}\right)^{d_i}\Tr(A^{d(m-1)}),\]
where $d_1,\ldots, d_n$ are nonnegative integers,
$t_{iy}=t_{ii_2\ldots i_m}$ and $\frac{\partial }{\partial a_{iy}}=\frac{\partial }{\partial a_{ii_2}} \cdots \frac{\partial }{\partial a_{i i_m}}$
if $y=i_2\ldots i_m$.
By the results in \cite{CD,HHLQ, SQH}
\[\label{charp}\varphi_\A(\la)=\sum_{i=0}^D P_i\left(-\frac{\Tr_1(\A)}{1},-\frac{\Tr_2(\A)}{2},\cdots,-\frac{\Tr_i(\A)}{i}\right)\la^{D-i}, ~~(D=n(m-1)^{n-1}),\]
where the Schur polynomial
\begin{equation}\label{schur}P_d(t_1,\cdots,t_d)=\sum_{m=1}^d \sum_{d_1+\cdots+d_m=d, d_i \in \mathbb{Z}^+} \frac{t_{d_1} \cdots t_{d_m}}{m!}.\end{equation}
It was proved in \cite{HHLQ, SQH} that
\begin{equation} \label{traceF}\Tr_d(\A)=\sum_{i=1}^D \la_i^d,\end{equation}
where $\la_1,\ldots,\la_D$ are all eigenvalues of $\A$.

Shao, Qi and Hu \cite{SQH} give a graph theoretic formula for the trace $\Tr_d(\A)$.
Denote
\[\label{Fd}\F_d=\{((i_1,\alpha_1),\cdots, (i_d,\alpha_d)):1 \le i_1 \le \cdots i_d \le n; \alpha_1, \cdots, \alpha_n \in [n]^{m-1}\}.\]
For each $F=((i_1,\alpha_1),\cdots, (i_d,\alpha_d)) \in \F_d$, define a directed graph $D(F)$ with arc multi-set
$E(F)=\cup_{j=1}^d E_j(F)$, where $E_j(F)=\{(i_j,v_1),(i_j,v_2),\ldots,(i_j,v_{m-1})\}$ if $\alpha_j=(v_1,\ldots,v_{m-1})$.
Here, for each tuple $(i_j,\alpha_j)$, $i_j$ is called the {\it primary index} and the indices in $\alpha_j$ are called {\it secondary indices}.

In the directed graph $D(F)$, denote by $b(F)$ the product of the factorials of the multiplicities of all arcs of $E(F)$,
  by $c(F)$ the product of the factorials of the out-degrees of  all vertices incident to the arcs of $E(F)$,
  and by $\W(F)$ the set of all closed walks with the arc  multi-set $E(F)$.
Denote by $\Pi_F(\A)=\prod_{j=1}^d t_{i_j,\alpha_j}$.
Then
\begin{equation}\label{tracedF}\Tr_d(\A)=(m-1)^{n-1}\sum_{F \in \F_d}\frac{b(F)}{c(F)}\Pi_F(\A)|\W(F)|.\end{equation}

If one summand of (\ref{tracedF}) is nonzero for some $F=((i_1,\alpha_1),\cdots, (i_d,\alpha_d))  \in \F_d$, then
$\Pi_F(\A)=\prod_{j=1}^d t_{i_j,\alpha_j}$ is {\it $m$-valent}, i.e. each index occurs in the monomial
$\Pi_F(\A)$ in times of multiple of $m$ \cite{CD,SQH};
furthermore, the directed graph $D(F)$ contains a Eulerian directed circuit, i.e. $\W(F) \ne \emptyset$, or equivalently
$D(F)$ is connected and each vertex of $D(F)$ has the same in-degree and out-degree.
If, in addition, $\A=\A(G)$ for some hypergraph $G$, then, by omitting the order of tuples in $F$,  the above $F$ can be written as a set
\begin{equation}\label{F}F =\{e_1(i_1),\ldots,e_d(i_d)\},\end{equation}
where $e_j(i_j)$ denotes an edge $e_j$ of $G$ with primary index (vertex) $i_j$ for $j \in [d]$.
If we write $\varphi_\A(\la)=\sum_{i=0}^D a_i\la^{D-i}$, where $a_0=1$, then
\begin{equation}\label{coef}a_i=P_i\left(-\frac{\Tr_1(\A)}{1},-\frac{\Tr_2(\A)}{2},\cdots,-\frac{\Tr_i(\A)}{i}\right), i=1,\ldots,D.\end{equation}

\subsection{Perron-Frobenius theorem for nonnegative tensors}
Chang et.al \cite{CPZ} generalize the Perron-Frobenius theorem for nonnegative matrices to nonnegative tensors.
Yang and Yang \cite{YY,YY2,YY3} get some further results for Perron-Frobenius theorem, especially for the spectral symmetry.
Friedland et.al \cite{FGH} also get some results for weakly irreducible nonnegative tensors.
We combine those results in the following theorem, where an eigenvalue is called {\it $H^+$-eigenvalue} (respectively {\it $H^{++}$-eigenvalue}) if it is associated with
a nonnegative (respectively positive) eigenvector.

\begin{theorem}[The Perron-Frobenius Theorem for nonnegative tensors]\label{PF1}~~

\begin{enumerate}

\item{\em(Yang and Yang \cite{YY})}  If $\A$ is a nonnegative tensor of order $m$ and dimension $n$, then $\rho(\A)$ is an $H^+$-eigenvalue of $\A$.

\item{\em(Friedland, Gaubert and Han \cite{FGH})} If furthermore $\A$ is weakly irreducible, then $\rho(\A)$ is the unique $H^{++}$-eigenvalue of $\A$,
with the unique positive eigenvector, up to a positive scalar.

\item{\em(Chang, Pearson and Zhang \cite{CPZ})} If moreover $\A$ is irreducible, then $\rho(\A)$ is the unique $H^{+}$-eigenvalue of $\A$,
with the unique nonnegative eigenvector, up to a positive scalar.

\end{enumerate}

\end{theorem}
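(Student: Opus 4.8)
This statement compiles three theorems from the literature, and a self-contained proof would reassemble them in order of increasing strength, the positive case doing the real work. First suppose $\A$ has all entries strictly positive. On the simplex $\Delta=\{x\ge 0:\ \sum_i x_i=1\}$ the map sending $x$ to the renormalization into $\Delta$ of $((\A x^{m-1})_1^{1/(m-1)},\dots,(\A x^{m-1})_n^{1/(m-1)})$ is well defined and continuous, because $\A x^{m-1}>0$ for every $x\in\Delta$; by Brouwer's theorem it has a fixed point, which is a positive eigenvector with positive eigenvalue $\lambda_0$. That $\lambda_0=\rho(\A)$ follows from a Collatz--Wielandt comparison: for any eigenpair $(\lambda,z)$ the triangle inequality gives $\A|z|^{m-1}\ge|\lambda|\,|z|^{[m-1]}$ coordinatewise, and evaluating at an index where $|z_i|/x_i$ is maximal (with $x>0$ the positive eigenvector) forces $|\lambda|\le\lambda_0$. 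For a general nonnegative $\A$ I would perturb: put $\A_\epsilon=\A+\epsilon\,\mathcal E$ with $\mathcal E$ the all-ones tensor, so that each $\A_\epsilon$ is positive and furnishes a positive pair $(\rho(\A_\epsilon),x_\epsilon)$ with $x_\epsilon\in\Delta$; since the coefficients of $\varphi_\A(\lambda)$ are polynomials in the entries of $\A$ through the generalized-trace formulas recalled above, $\rho$ depends continuously on the entries, so $\rho(\A_\epsilon)\to\rho(\A)$, and extracting a subsequence with $x_\epsilon\to x^*\in\Delta$ and passing to the limit in $\A_\epsilon x_\epsilon^{m-1}=\rho(\A_\epsilon)x_\epsilon^{[m-1]}$ yields $\A (x^*)^{m-1}=\rho(\A)(x^*)^{[m-1]}$ with $x^*\ge 0$, $x^*\ne 0$: this is part (1).

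For part (3) let $\A$ be irreducible and $x^*\ge 0$ the eigenvector just produced, and set $Z=\{i:x^*_i=0\}$. If $Z\ne\emptyset$ (hence a proper subset), then for each $i\in Z$ the equation $0=(\A (x^*)^{m-1})_i=\sum a_{ii_2\cdots i_m}x^*_{i_2}\cdots x^*_{i_m}$ is a sum of nonnegative terms, so each term vanishes; hence $a_{ii_2\cdots i_m}=0$ whenever $i\in Z$ and $i_2,\dots,i_m\notin Z$, i.e. $Z$ is a reducing index set, a contradiction. Thus $x^*>0$. For the uniqueness assertions, if $y\ge 0$ is another eigenvector the same step gives $y>0$; rescaling $y$ so that $x^*\le y$ with equality in some coordinate $i_0$ and using that every $a_{i_0i_2\cdots i_m}$ is positive forces each term of $(\A (x^*)^{m-1})_{i_0}=(\A y^{m-1})_{i_0}$ to be an equality, so $x^*=y$ and the two eigenvalues coincide; hence $\rho(\A)$ is the only $H^+$-eigenvalue and its nonnegative eigenvector is unique up to a positive scalar.

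For part (2) the single genuinely new difficulty is upgrading ``nonnegative'' to ``positive'' when $\A$ is only weakly irreducible (and possibly reducible, as happens for hypergraph tensors with $m\ge 3$), where one cannot point to a reducing set. Here I would argue on the support $N=[n]\setminus Z$: $x^*$ restricted to $N$ is a positive eigenvector, for the eigenvalue $\rho(\A)$, of the principal subtensor $\A[N|\cdots|N]$, and since $\rho$ does not decrease when a principal subtensor is enlarged this forces $\rho(\A[N|\cdots|N])=\rho(\A)$; passing to a terminal strongly connected component $T$ of the digraph of $\A[N|\cdots|N]$, one checks that every nonzero entry of that subtensor with primary index in $T$ actually has all its secondary indices in $T$, so $\A[T|\cdots|T]$ is itself weakly irreducible, the restriction of $x^*$ is still a positive eigenvector of it for $\rho(\A)$, and $\rho(\A[T|\cdots|T])=\rho(\A)$. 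The crux --- and the step I expect to be the main obstacle --- is to make this descent actually collapse to $N=[n]$: strong connectivity of $D(\A)$ does not by itself turn $\{x^*_i=0\}$ into a reducing set, and one must rule out a \emph{proper} weakly irreducible principal subtensor sharing the spectral radius, which requires tracking how strict positivity is forced to propagate through the secondary indices of the eigenvalue equations via a sharpened Collatz--Wielandt argument (this propagation is exactly the content of the Friedland--Gaubert--Han refinement). Once $Z=\emptyset$, the uniqueness statements in part (2) follow exactly as in part (3).
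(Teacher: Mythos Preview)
The paper does not prove this theorem at all: Theorem~\ref{PF1} sits in the Preliminaries and is merely quoted, with each of the three parts attributed to its source (\cite{YY}, \cite{FGH}, \cite{CPZ}) and no argument supplied. So there is nothing in the paper to compare your proposal against; what you have written is a reconstruction of the proofs one would find in those references, and in broad outline (Brouwer fixed point for the positive case, perturbation for the general nonnegative case, a reducing-set contradiction for irreducibility, and a more delicate Collatz--Wielandt/propagation argument for weak irreducibility) it matches the standard route.

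That said, your uniqueness argument in part~(3) contains a slip: you write ``using that every $a_{i_0i_2\cdots i_m}$ is positive,'' but irreducibility does not make all entries positive. What you actually get from $(\A (x^*)^{m-1})_{i_0}=(\A y^{m-1})_{i_0}$ (once the eigenvalues are shown equal, which should be done first via a two-sided Collatz--Wielandt bound) is that $x^*_{i_2}\cdots x^*_{i_m}=y_{i_2}\cdots y_{i_m}$ for those tuples with $a_{i_0i_2\cdots i_m}>0$; since $0<x^*\le y$ this forces $x^*_j=y_j$ for every out-neighbour $j$ of $i_0$ in $D(\A)$, and then one propagates through the strongly connected digraph. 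This is a minor repair, but as written the sentence is false. For part~(2) you correctly flag the genuine obstacle --- that strong connectivity of $D(\A)$ does not immediately turn the zero set of $x^*$ into a reducing set --- and you do not close the gap; the Friedland--Gaubert--Han argument handles this via a nonlinear Collatz--Wielandt characterisation (or equivalently via the sharp strict-inequality version of the subtensor bound), and your sketch would need that ingredient made explicit to be complete.
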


According to the definition of tensor product in \cite{Shao}, for a tensor $\A$ of order $m$ and dimension $n$, and two diagonal matrices $P,Q$ both of dimension $n$,
the product $P\A Q$ has the same order and dimension as $\A$, whose entries are defined by
\[(P\A Q)_{i_1i_2\ldots i_m}=p_{i_1i_1}a_{i_1i_2\ldots i_m}q_{i_2i_2}\ldots q_{i_mi_m}.\]
If $P=Q^{-1}$, then $\A$ and $P^{m-1}\A Q$ are called {\it diagonal similar}.
It is proved that two diagonal similar tensors have the same spectrum \cite{Shao}.

\begin{theorem}[\cite{YY3}] \label{PF2}
 Let $\A$ and $\mathcal{B}$ be two $m$-th order $n$-dimensional real tensors with $|\mathcal{B}| \le  \A$. Then

\begin{enumerate}

\item $\rho(\mathcal{B}) \le \rho(\A)$.

\item Furthermore, if $\A$ is weakly irreducible and $\rho(\mathcal{B}) = \rho(\A)$, where $\la=\rho(A)e^{\i \theta}$ is an eigenvalue of $\mathcal{B}$ corresponding to an
eigenvector $y$,  then $y$ contains no zero entries, and $\mathcal{B}=e^{-\i \theta}D^{-(m-1)}\A D$, where $D=\hbox{diag}\{\frac{y_1}{|y_1|},\ldots,\frac{y_n}{|y_n|}\}$.
\end{enumerate}
\end{theorem}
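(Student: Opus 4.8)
The plan is to get both parts by taking entrywise moduli in a peripheral eigenequation of $\mathcal B$, squeezing the resulting Collatz--Wielandt inequality for $\A$, and then extracting the equality case; the single non-formal point is handled by an AM--GM estimate that reduces it to the classical matrix Perron--Frobenius theorem. For part (1), I would pick $\la\in\S(\mathcal B)$ with $|\la|=\rho(\mathcal B)$ and $y\ne 0$ with $\mathcal B y^{m-1}=\la y^{[m-1]}$, and set $z:=(|y_1|,\dots,|y_n|)\ge 0$ (so $z\ne 0$). Taking moduli componentwise and using $|\mathcal B|\le\A$ gives
\[
\rho(\mathcal B)\,z_i^{m-1}=\bigl|(\mathcal B y^{m-1})_i\bigr|\le\sum_{i_2,\dots,i_m}|b_{ii_2\cdots i_m}|\,z_{i_2}\cdots z_{i_m}\le(\A z^{m-1})_i\qquad(i\in[n]),
\]
hence $\A z^{m-1}\ge\rho(\mathcal B)\,z^{[m-1]}$ with $z\ge 0$, $z\ne 0$; the one-sided Collatz--Wielandt inequality for nonnegative tensors (a standard consequence of Theorem~\ref{PF1}, e.g.\ via the irreducible perturbations $\A+\varepsilon\mathcal E$ with $\varepsilon\to 0^+$) then forces $\rho(\A)\ge\rho(\mathcal B)$.

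For part (2), assume in addition that $\A$ is weakly irreducible and $\rho(\mathcal B)=\rho(\A)=:\rho$; write $\la=\rho e^{\i\theta}$ and keep $y,z$ as above. The main step I would carry out is to prove $z>0$ (equivalently, $y$ has no zero entry) together with $\A z^{m-1}=\rho z^{[m-1]}$. To this end, let $v>0$ be the Perron eigenvector of $\A$ (Theorem~\ref{PF1}(2)) and let $M=(M_{ij})$ be the nonnegative matrix determined by
\[
M_{ij}=\frac{1}{(m-1)\,v_i^{m-1}}\sum_{i_2,\dots,i_m}a_{ii_2\cdots i_m}\,\bigl|\{\,k: 2\le k\le m,\ i_k=j\,\}\bigr|\,v_{i_2}\cdots v_{i_m}.
\]
Summing over $j$ shows $M\mathbf 1=\rho\,\mathbf 1$, and since the digraph of $M$ coincides with $D(\A)$ the matrix $M$ is irreducible, so $\rho(M)=\rho$ with Perron eigenvector $\mathbf 1$. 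Setting $\xi:=\bigl((z_1/v_1)^{m-1},\dots,(z_n/v_n)^{m-1}\bigr)\ge 0$, $\xi\ne 0$, and applying AM--GM to the $m-1$ numbers $(z_{i_k}/v_{i_k})^{m-1}$ in each term of $(\A z^{m-1})_i=\sum a_{ii_2\cdots i_m}\,v_{i_2}\cdots v_{i_m}\prod_{k=2}^m(z_{i_k}/v_{i_k})$ and regrouping, I get $(\A z^{m-1})_i\le v_i^{m-1}(M\xi)_i$; together with $(\A z^{m-1})_i\ge\rho z_i^{m-1}=\rho\,v_i^{m-1}\xi_i$ this gives $M\xi\ge\rho\,\xi=\rho(M)\,\xi$. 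By the matrix Perron--Frobenius theorem (testing against the positive left Perron vector of the irreducible matrix $M$) one concludes $M\xi=\rho(M)\,\xi$ and $\xi>0$; unwinding the squeeze then yields $z>0$ and $\A z^{m-1}=\rho z^{[m-1]}$.

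With this in hand, every inequality in part (1) becomes an equality for each $i$. From $\sum_{i_2,\dots,i_m}(a_{ii_2\cdots i_m}-|b_{ii_2\cdots i_m}|)\,z_{i_2}\cdots z_{i_m}=0$ with all $z_{i_k}>0$ I get $|b_{ii_2\cdots i_m}|=a_{ii_2\cdots i_m}$, i.e.\ $|\mathcal B|=\A$; and from $\bigl|\sum b_{ii_2\cdots i_m}y_{i_2}\cdots y_{i_m}\bigr|=\sum|b_{ii_2\cdots i_m}|\,|y_{i_2}\cdots y_{i_m}|$ every nonzero summand $b_{ii_2\cdots i_m}y_{i_2}\cdots y_{i_m}$ must carry the argument of $(\mathcal B y^{m-1})_i=\la y_i^{m-1}$, namely $\theta+(m-1)\arg y_i$. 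Setting $D=\mathrm{diag}(y_1/|y_1|,\dots,y_n/|y_n|)$ (well defined since $z>0$) and matching moduli and arguments in each entry with $a_{ii_2\cdots i_m}\ne 0$ (the entries with $a_{ii_2\cdots i_m}=0$ being trivial, since then $b_{ii_2\cdots i_m}=0$), I obtain
\[
b_{ii_2\cdots i_m}=e^{\i\theta}\,\frac{(y_i/|y_i|)^{m-1}}{(y_{i_2}/|y_{i_2}|)\cdots(y_{i_m}/|y_{i_m}|)}\;a_{ii_2\cdots i_m}.
\]
Since $D$ is unitary (so $\overline D=D^{-1}$) and $\A,\mathcal B$ are real, taking complex conjugates rewrites this as the asserted identity $\mathcal B=e^{-\i\theta}D^{-(m-1)}\A D$.

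The only step that is not routine triangle-inequality bookkeeping is the claim ``$z>0$ and $\A z^{m-1}=\rho z^{[m-1]}$'', and this is exactly where weak irreducibility of $\A$ is used; I expect the AM--GM reduction to the irreducible nonnegative matrix $M$ — which turns it into an application of the classical matrix Perron--Frobenius theorem — to be the crux of the argument.
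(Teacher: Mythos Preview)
The paper does not prove this theorem; it is quoted without proof from \cite{YY3} as a background Perron--Frobenius result (Theorem~\ref{PF2} in the preliminaries), so there is no proof in the paper to compare your argument against.

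That said, your argument is sound. Part~(1) is the standard modulus-and-Collatz--Wielandt estimate, with the perturbation $\A+\varepsilon\mathcal J$ handling the possibly non-irreducible case. In part~(2) your reduction to the classical matrix Perron--Frobenius theorem via the auxiliary nonnegative matrix $M$ built from the Perron vector of $\A$, together with the AM--GM bound $\prod_{k=2}^m(z_{i_k}/v_{i_k})\le\frac{1}{m-1}\sum_{k=2}^m(z_{i_k}/v_{i_k})^{m-1}$, is correct: the identification of the support digraph of $M$ with $D(\A)$ is precisely what makes weak irreducibility of $\A$ suffice, the left-Perron-vector test then forces $M\xi=\rho\xi$ and $\xi>0$, and the resulting squeeze gives $z>0$ and $\A z^{m-1}=\rho z^{[m-1]}$. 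The extraction of $|\mathcal B|=\A$ and the phase identity from the equality cases is routine, and your final conjugation step (using that $\A,\mathcal B$ are real and $\overline D=D^{-1}$) correctly converts the phase relation into the stated form $\mathcal B=e^{-\i\theta}D^{-(m-1)}\A D$.
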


\begin{theorem}[\cite{YY3}] \label{PF3}
Let $\A$ be an $m$-th order $n$-dimensional weakly irreducible nonnegative tensor.
Suppose $\A$ has $k$ distinct eigenvalues with modulus $\rho(\A)$ in total.
Then these eigenvalues are $\rho(\A)e^{\i \frac{2 \pi j}{k}}$, $j=0,1,\ldots,k-1$.
Furthermore, \begin{equation}\A =e^{-\i \frac{2\pi}{k}}D^{-(m-1)}\A D,\end{equation}
and the spectrum of $\A$ keeps invariant under a rotation of angle $\frac{2\pi}{k}$ (but not a smaller positive angle) of the complex plane.
\end{theorem}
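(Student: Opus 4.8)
The plan is to derive the whole statement from the Perron--Frobenius package (Theorems~\ref{PF1} and~\ref{PF2}) together with the elementary classification of finite subgroups of the unit circle. Write $\rho=\rho(\A)$, and let $\la_1=\rho,\la_2,\dots,\la_k$ be the $k$ distinct eigenvalues of $\A$ of modulus $\rho$; note $\rho\in\S(\A)$ by Theorem~\ref{PF1}(1), so we may take $\la_1=\rho$. Write $\la_j=\rho e^{\i\theta_j}$ with $\theta_1=0$, and put $\Gamma=\{e^{\i\theta_1},\dots,e^{\i\theta_k}\}$, a set of $k$ distinct points of the unit circle $S^1$ containing $1$.

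First I would record the basic diagonal-similarity identity. Fix $j$ and apply Theorem~\ref{PF2}(2) with $\mathcal{B}=\A$: the hypotheses $|\mathcal{B}|\le\A$ and $\rho(\mathcal{B})=\rho(\A)$ hold trivially, and $\la_j=\rho e^{\i\theta_j}$ is an eigenvalue of $\A$, so there is a diagonal matrix $D_j$ (formed from the associated eigenvector, and having no zero entries) with
\[\A=e^{-\i\theta_j}D_j^{-(m-1)}\A D_j.\]
Since diagonal similar tensors have the same spectrum, and $\S(c\A)=c\,\S(\A)$ for every nonzero scalar $c$, this gives $\S(\A)=e^{-\i\theta_j}\,\S(\A)$ for every $j$.

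Next I would extract the group structure. Restricting the multiset identity $\S(\A)=e^{-\i\theta_j}\S(\A)$ to its submultiset of eigenvalues of maximal modulus $\rho$, and then passing to underlying sets (legitimate since the $\la_j$ are assumed distinct), yields $\Gamma=e^{-\i\theta_j}\Gamma$, i.e.\ $e^{\i\theta_j}\Gamma=\Gamma$, for all $j$. Hence for $g,h\in\Gamma$ one has $gh\in g\Gamma=\Gamma$, so $\Gamma$ is a nonempty finite subset of $S^1$ closed under multiplication, therefore a finite subgroup of $S^1$; the one of order $k$ is exactly $\mu_k=\{e^{\i\frac{2\pi j}{k}}:j=0,1,\dots,k-1\}$. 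Multiplying by $\rho$ proves the eigenvalues of modulus $\rho$ are $\rho e^{\i\frac{2\pi j}{k}}$, $j=0,\dots,k-1$. Choosing $j$ with $\theta_j=\frac{2\pi}{k}$ in the displayed identity gives the asserted $\A=e^{-\i\frac{2\pi}{k}}D^{-(m-1)}\A D$, and the same choice in $\S(\A)=e^{-\i\theta_j}\S(\A)$ is precisely invariance of $\S(\A)$ under rotation by $\frac{2\pi}{k}$.

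For the minimality clause, suppose $\S(\A)=e^{\i\alpha}\S(\A)$ for some $\alpha>0$. Then multiplication by $e^{\i\alpha}$ permutes the eigenvalues of modulus $\rho$; applying it to $\rho\in\S(\A)$ forces $e^{\i\alpha}\rho=\rho e^{\i\frac{2\pi j}{k}}$ for some $j$, hence $\alpha\in\frac{2\pi}{k}\mathbb{Z}$ and so $\alpha\ge\frac{2\pi}{k}$. I do not anticipate a genuine obstacle: the only substantive input is Theorem~\ref{PF2}(2) (which already packages the hard analysis), and the remainder is the classification of finite subgroups of $S^1$ together with routine bookkeeping on spectra; the one point requiring care is the passage from multiset identities for $\S(\A)$ to set identities on the spectral circle, which uses exactly the standing hypothesis that the $k$ eigenvalues of modulus $\rho$ are distinct.
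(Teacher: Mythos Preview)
The paper does not supply its own proof of this statement: Theorem~\ref{PF3} is quoted from \cite{YY3} as part of the preliminaries, so there is nothing in the present paper to compare against. Your derivation is correct and is essentially the standard argument (and, in outline, the one in \cite{YY2,YY3}): apply Theorem~\ref{PF2}(2) with $\mathcal{B}=\A$ to obtain, for each eigenvalue $\rho e^{\i\theta_j}$ on the spectral circle, the identity $\A=e^{-\i\theta_j}D_j^{-(m-1)}\A D_j$, deduce $\S(\A)=e^{-\i\theta_j}\S(\A)$, conclude that the set $\{e^{\i\theta_j}\}$ is a finite subgroup of $S^1$ and hence equals the $k$-th roots of unity, and read off both the explicit similarity for the generator $e^{\i 2\pi/k}$ and the minimality of the rotation angle. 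The one delicate point you flagged---passing from the multiset identity on $\S(\A)$ to a set identity on the spectral circle---is handled exactly as you say, by the hypothesis that the $k$ eigenvalues of modulus $\rho$ are distinct.
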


Suppose $\A$ be as in Theorem \ref{PF3}.
By Theorem \ref{PF2} and Theorem \ref{PF3}, if $\S(\A)$ is invariant under a rotation of angle $\theta$ of the complex plane,
then $\theta=\frac{2 \pi j}{k}$ for some positive $k$ and some $j \in [k]$.
So   \begin{equation} \S(\A)=e^{\i \frac{2\pi}{k}}\S(\A).\end{equation}
This is the motivation of our Definition \ref{ell-sym}.
The number $k$ in Theorem \ref{PF3} is exactly the cyclic index of $\A$.
In addition, if $\A$ is spectral $\ell$-symmetric,
Then $\ell \mid  c(\A)$ by Theorem \ref{PF3}.
We have a more generalized result as follows.

\begin{lemma}
Let $\A$ be an $m$-th order $n$-dimensional tensor.
If $\A$ is spectral $\ell$-symmetric, then $\ell \mid  c(\A)$.
\end{lemma}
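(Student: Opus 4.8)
The plan is to reduce the divisibility claim to the case already handled by Theorem \ref{PF3}, which concerns weakly irreducible \emph{nonnegative} tensors, by a standard ``modulus'' comparison argument. The subtlety is that the lemma as stated allows $\A$ to be an \emph{arbitrary} real tensor, so we cannot invoke Theorem \ref{PF3} directly; the cyclic index $c(\A)$ here is only defined through the rotational symmetry of the spectrum, not through a Perron--Frobenius structure. So the first thing I would do is unwind the definitions: by hypothesis $\S(\A)=e^{\i\frac{2\pi}{\ell}}\S(\A)$, and $c(\A)$ is by definition the \emph{largest} positive integer $k$ with $\S(\A)=e^{\i\frac{2\pi}{k}}\S(\A)$. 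Thus what must be shown is purely a statement about finite multisets of complex numbers: if a multiset $M\subset\mathbb{C}$ is invariant under rotation by $\frac{2\pi}{\ell}$, then it is invariant under rotation by $\frac{2\pi}{k}$ where $k=c(\A)$, i.e. $\ell\mid k$.

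The key step is therefore the following elementary lemma about the symmetry group of a finite multiset. Let $M=\S(\A)$, regarded as a finite multiset in $\mathbb{C}$, and let $\Theta=\{\theta\in[0,2\pi): e^{\i\theta}M=M\}$. One checks that $\Theta$ is closed under addition mod $2\pi$ and under negation, hence is a subgroup of the circle group $\mathbb{R}/2\pi\mathbb{Z}$. Because $M$ is finite, $\Theta$ is a \emph{finite} subgroup (for instance, each nonzero element of $M$ has finite orbit, so the stabilizer behaviour forces finiteness; alternatively, if $M$ consists only of zeros the statement is trivial since then $\S(\A)=e^{\i\frac{2\pi}{k}}\S(\A)$ for every $k$ and $c(\A)$ is taken to be interpreted accordingly). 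A finite subgroup of $\mathbb{R}/2\pi\mathbb{Z}$ is cyclic, generated by $\frac{2\pi}{k}$ for some positive integer $k$; by maximality of $c(\A)$ this $k$ equals $c(\A)$. Now the hypothesis says $\frac{2\pi}{\ell}\in\Theta$, so $\frac{2\pi}{\ell}$ is a multiple of the generator $\frac{2\pi}{c(\A)}$, which is exactly $\ell\mid c(\A)$.

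I would present this cleanly as: (i) verify $\Theta$ is a subgroup; (ii) verify $\Theta$ is finite and hence cyclic with a well-defined smallest positive generator $\frac{2\pi}{c(\A)}$, matching the definition of cyclic index; (iii) conclude $\frac{2\pi}{\ell}\in\Theta$ forces $\ell\mid c(\A)$. A small remark is needed to square this with the paper's convention when $c(\A)=1$ (spectral nonsymmetric) and when $\S(\A)=\{0,\ldots,0\}$; in the latter degenerate case every $\ell$ works and the divisibility is vacuous, so one may silently assume $\S(\A)$ has a nonzero element, which is automatic for the weakly irreducible nonnegative tensors that are the real objects of interest (there $\rho(\A)>0$).

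The main obstacle is essentially bookkeeping rather than mathematics: one must be careful that ``multiset invariance'' $e^{\i\theta}M=M$ is used with multiplicities, so that $\Theta$ is genuinely a group (the inverse of a rotation symmetry is again a symmetry precisely because $M$ is finite and $e^{\i\theta}$ is a bijection of $M$ onto itself of finite order). Once that is in place, the argument is the classical fact that finite subgroups of the circle are cyclic, and the lemma follows in a line. I do not anticipate needing any of the tensor machinery (generalized traces, weak irreducibility, the group $\Dg$) for this particular statement — it is a soft consequence of the definition of $c(\A)$ — so the proof should be only a few lines.
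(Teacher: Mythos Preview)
Your argument is correct. Both your proof and the paper's reduce to the same elementary fact about rotational symmetries of a finite multiset, but the presentations differ. You package it group-theoretically: the set $\Theta$ of symmetry angles is a finite subgroup of $\mathbb{R}/2\pi\mathbb{Z}$, hence cyclic of some order $k$, and matching $k$ with $c(\A)$ gives $\ell\mid c(\A)$ immediately. The paper instead argues by contradiction and by hand: assuming $\ell\nmid c(\A)$, it forms the difference angle $\theta=\frac{2\pi}{\ell}-\frac{2\pi h}{c(\A)}<\frac{2\pi}{c(\A)}$, writes $\theta=\frac{2\pi q}{p}$ with $(p,q)=1$ and $p>c(\A)$, and then uses B\'ezout to show $\frac{2\pi}{p}$ is also a symmetry angle, contradicting maximality of $c(\A)$. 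Your approach is cleaner and more conceptual; the paper's is more self-contained in that it does not invoke (even the trivial) classification of finite subgroups of the circle. One small point: your finiteness step for $\Theta$ is the place where an explicit sentence helps---the injection $\theta\mapsto e^{\i\theta}\lambda$ into the support of $\S(\A)$ for any fixed nonzero eigenvalue $\lambda$ does the job, and you correctly flag the degenerate all-zero-spectrum case where $c(\A)$ is not well-defined by the paper's convention.
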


\begin{proof}  Let $c:=c(\A)$.
Assume to the contrary, $\ell \nmid  c$.
Then there exists an integer $h$ such that $\frac{2\pi h}{c} <  \frac{2\pi}{\ell} <\frac{2\pi(h+1)}{c}$,
and hence $\theta:=\frac{2\pi}{\ell}-\frac{2\pi h}{c}<\frac{2\pi }{c}$.
Write $\theta=\frac{2\pi q }{p}$, where $(p,q)=1$ and $p>c$.
As $\A$ is both spectral $\ell$-symmetric and spectral $c$-symmetric, we have
\[\S(\A)=e^{\i \theta}\S(\A)=e^{\i \frac{2\pi q }{p}}\S(\A).\]
Since $(p,q)=1$, there exist integers $h_1,h_2$ such that $ph_1+qh_2=1$.
So \[\S(\A)=e^{\i \frac{2\pi q }{p}}\S(\A)=e^{\i \frac{2\pi qh_2 }{p}}\S(\A)=e^{\i \frac{2\pi }{p}}\S(\A),\]
which implies that $\A$ is spectral $p$-symmetric, a contradiction as $p>c=c(\A)$.
\end{proof}

\section{Structure of nonnegative weakly irreducible tensors}
In this section we will first analyze the property of the group $\Dg$ defined in (\ref{Dg}) by the theory of finite abelian group.
Then we obtain some structural information of a  weakly irreducible nonnegative tensor, with application to the edge distribution and coloring of connected hypergraphs.

\begin{lemma}[\cite{YY3}] \label{ev}
Let $\A$ be an $m$-th order $n$-dimensional weakly irreducible nonnegative tensor.
Let $y$ be an eigenvector of $\A$ corresponding to an eigenvalue $\la$ with $|\la|=\rho(\A)$.
Then $|y|$ is the unique positive eigenvector corresponding to $\rho(\A)$ up to a scalar.
\end{lemma}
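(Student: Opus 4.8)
The plan is to deduce the statement from the Perron--Frobenius theorem (Theorem~\ref{PF1}(2)) by proving that $|y|$ is itself a positive eigenvector of $\A$ associated with $\rho(\A)$; once that is known, uniqueness up to a positive scalar is immediate. Write $\la=\rho(\A)e^{\i\theta}$. The first step is to apply Theorem~\ref{PF2} with $\mathcal{B}=\A$: since $|\A|=\A\le\A$, since $\A$ is weakly irreducible, since $\rho(\mathcal{B})=\rho(\A)$, and since $\la$ is an eigenvalue of $\mathcal{B}=\A$ with eigenvector $y$ and $|\la|=\rho(\A)$, part~(2) of that theorem yields that $y$ has no zero entry and that $\A=e^{-\i\theta}D^{-(m-1)}\A D$, where $D=\mathrm{diag}\{d_1,\dots,d_n\}$ with $d_i=y_i/|y_i|$ (so $|d_i|=1$). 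Set $x:=|y|$, a positive vector with $y_i=d_ix_i$ for all $i$.

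Next I would read off the entrywise form of this diagonal similarity: comparing $(i_1,\dots,i_m)$-entries of $\A$ and of $e^{-\i\theta}D^{-(m-1)}\A D$ shows that $d_{i_1}^{m-1}=e^{-\i\theta}d_{i_2}\cdots d_{i_m}$ whenever $a_{i_1\cdots i_m}\ne 0$. Substituting $y_i=d_ix_i$ into $\A y^{m-1}=\la y^{[m-1]}$ and applying this identity (with $i_1=i$) inside each nonzero summand of $(\A y^{m-1})_i=\sum_{i_2,\dots,i_m} a_{ii_2\cdots i_m}(d_{i_2}\cdots d_{i_m})\,x_{i_2}\cdots x_{i_m}$ pulls the common factor $e^{\i\theta}d_i^{m-1}$ outside, giving $(\A y^{m-1})_i=e^{\i\theta}d_i^{m-1}(\A x^{m-1})_i$. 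On the other hand $(\A y^{m-1})_i=\la y_i^{m-1}=\rho(\A)e^{\i\theta}d_i^{m-1}x_i^{m-1}$. Cancelling the nonzero factor $e^{\i\theta}d_i^{m-1}$ gives $(\A x^{m-1})_i=\rho(\A)x_i^{m-1}$ for every $i$, i.e. $\A x^{m-1}=\rho(\A)x^{[m-1]}$. Hence $x=|y|$ is a positive eigenvector of $\A$ for $\rho(\A)$, and Theorem~\ref{PF1}(2) gives that it is the unique positive eigenvector up to a positive scalar, which is the assertion.

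The only point needing care is the bookkeeping with the unit-modulus scalars $d_i$ in the second step: one must invoke $d_{i_1}^{m-1}=e^{-\i\theta}d_{i_2}\cdots d_{i_m}$ only on the support of $\A$, since that is exactly where the summands of $\A y^{m-1}$ are nonzero, in order to legitimately factor $e^{\i\theta}d_i^{m-1}$ out of the $i$-th coordinate; everything else is a direct appeal to Theorems~\ref{PF2} and~\ref{PF1}. (Alternatively, the triangle inequality already gives $\A|y|^{m-1}\ge\rho(\A)|y|^{[m-1]}$ entrywise, and one could try to upgrade this to an equality by a Collatz--Wielandt argument; but the diagonal-similarity relation furnished by Theorem~\ref{PF2} produces the equality in one stroke, so that is the route I would take.)
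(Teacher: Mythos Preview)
Your argument is correct. Note, however, that the paper does not supply its own proof of Lemma~\ref{ev}: the lemma is quoted from \cite{YY3} and stated without proof, so there is no in-paper argument to compare against. What you have written is a clean, self-contained derivation from Theorems~\ref{PF1}(2) and~\ref{PF2} as stated in the paper, and the key step---using the diagonal similarity $\A=e^{-\i\theta}D^{-(m-1)}\A D$ on the support of $\A$ to factor $e^{\i\theta}d_i^{m-1}$ out of $(\A y^{m-1})_i$---is exactly the right move. Your remark that the identity $d_{i_1}^{m-1}=e^{-\i\theta}d_{i_2}\cdots d_{i_m}$ need only hold where $a_{i_1\cdots i_m}\ne 0$ is the one point requiring care, and you handle it correctly.
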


Let $r$ be a positive integer and let $p$ be a prime number.
Denote by $r_{[p]}$ be the maximum power of $p$ that divides $r$.
Surely, if $p \nmid r$, then $r_{[p]}=1$.

\begin{lemma} \label{group}
Let $\A$ be an $m$-th order $n$-dimensional weakly irreducible nonnegative tensor, which is spectral $\ell$-symmetric.
Suppose $\A$ has $r$ distinct eigenvectors corresponding to $\rho(\A)$ in total up to a scalar.
Let $\Dg$ and $\Dg^{(j)}$ be as defined in (\ref{Dg}) for $j=0,1,\ldots,\ell-1$.
Then the following results hold.

\begin{enumerate}

\item $\Dg$ is a finite abelian group of order $r\ell$ under  matrix multiplication, where $\Dg^{(0)}$ is a subgroup of $\Dg$ of order $r$, and
$\Dg^{(j)}$ is a coset of $\Dg^{(0)}$ in $\Dg$ for $j=1,\ldots,\ell-1$.

\item For each prime factor $p$ of $\ell$, there exists a matrix $D \in \Dg \backslash \Dg^{(0)}$ such that $D^{r_{[p]} p}=I$ and
hence $D^{r_{[p]} \ell}=D^{rp}=D^{r \ell}=I$.

\item If further $\A$ is symmetric, then $\ell\mid m$, and $D^{m}=I$ for any $D \in \Dg$.
In particular, each elementary divisor of $\Dg$ divides $m$ and each prime factor of $r \ell$ divides $m$.
\end{enumerate}

\end{lemma}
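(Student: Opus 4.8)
The plan is to build the group structure directly from Theorem \ref{PF2} and Theorem \ref{PF3}, using the fact (Lemma \ref{ev}) that every eigenvector of $\A$ with eigenvalue of modulus $\rho(\A)$ is obtained from the Perron vector $x>0$ by multiplying coordinatewise by a unit-modulus diagonal. First I would fix the Perron vector $x$ and observe that, for $D \in \Dg^{(j)}$, the vector $y = Dx$ satisfies $\A y^{m-1} = e^{\i \frac{2\pi j}{\ell}}\rho(\A) y^{[m-1]}$, so each $D \in \Dg^{(j)}$ corresponds to an eigenvector associated with the eigenvalue $e^{\i \frac{2\pi j}{\ell}}\rho(\A)$; conversely, by Theorem \ref{PF2}(2) and the normalization $d_{11}=1$, each such eigenvector (up to scalar) gives exactly one $D \in \Dg^{(j)}$. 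This sets up a bijection between $\Dg^{(j)}$ and the eigenvectors (up to scalar) for $e^{\i \frac{2\pi j}{\ell}}\rho(\A)$. Since $\A$ is spectral $\ell$-symmetric and weakly irreducible, Theorem \ref{PF3} shows the rotation by $\frac{2\pi}{\ell}$ is realized by some $D_1$, and multiplication by $D_1$ carries eigenvectors for $\rho(\A)$ bijectively to eigenvectors for $e^{\i\frac{2\pi}{\ell}}\rho(\A)$, etc.; hence each $\Dg^{(j)}$ has exactly $r$ elements and $|\Dg| = r\ell$.

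For part (1), the key closure computation is: if $\A = e^{-\i\alpha}D^{-(m-1)}\A D$ and $\A = e^{-\i\beta}E^{-(m-1)}\A E$, then substituting one into the other gives $\A = e^{-\i(\alpha+\beta)}(DE)^{-(m-1)}\A (DE)$, so $DE \in \Dg^{(j+j' \bmod \ell)}$ (the exponents add mod $\ell$ because the eigenvalues lie on the spectral circle and $e^{\i\frac{2\pi}{\ell}}$ has order $\ell$ there — one must check the product lands in the prescribed index set, which follows since $\Dg$ only contains matrices realizing the symmetries already present). Associativity is inherited from matrix multiplication; the identity $I \in \Dg^{(0)}$; inverses: from $\A = e^{-\i\alpha}D^{-(m-1)}\A D$ one gets $\A = e^{\i\alpha}D^{m-1}\A D^{-1} = e^{-\i(\ell-j)\frac{2\pi}{\ell}\cdot\ldots}\,(D^{-1})^{-(m-1)}\A D^{-1}$ after rewriting, so $D^{-1}\in\Dg$. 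Commutativity: since any $D\in\Dg$ is determined by the ratio $y_i/|y_i|$ of an eigenvector $y = Dx$, and two such diagonal matrices multiply coordinatewise, $DE = ED$. That $\Dg^{(0)}$ is a subgroup and the $\Dg^{(j)}$ are its cosets follows from the additive-mod-$\ell$ behavior of the index. Finiteness: $\Dg$ embeds into the torus $(S^1)^n$, and each $D\in\Dg$ has finite order because, as I show in part (2)/(3), a fixed power of $D$ equals $I$; alternatively one can argue $\Dg$ is a closed subgroup whose quotient by $\Dg^{(0)}$ is cyclic of order $\ell$, and $\Dg^{(0)}$ is finite because the eigenspace analysis forces the entries of any $D\in\Dg^{(0)}$ to be roots of unity of bounded order.

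For part (2): in the finite abelian group $\Dg$, pick a prime factor $p\mid\ell$. Since $\Dg/\Dg^{(0)}$ is cyclic of order $\ell$, it has an element of order $p$, so there is a coset $D\Dg^{(0)}$ of order $p$ in the quotient, i.e. $D\in\Dg\setminus\Dg^{(0)}$ with $D^p\in\Dg^{(0)}$. Now $\Dg^{(0)}$ has order $r$, so $(D^p)^r = (D^p)^{|\Dg^{(0)}|} = I$ inside $\Dg^{(0)}$; to get the sharper $D^{r_{[p]}p}=I$, I would use the structure theorem: write $\Dg^{(0)}\cong \prod \mathbb{Z}/q_i$ and note the order of $D^p$ in $\Dg^{(0)}$ divides the exponent of its $p$-primary part, which divides $r_{[p]}$. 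The remaining divisibilities $D^{r_{[p]}\ell} = D^{rp} = D^{r\ell} = I$ are then immediate since $r_{[p]}p \mid r_{[p]}\ell$, $r_{[p]}p\mid rp$ and $r_{[p]}p\mid r\ell$.

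For part (3), the symmetric case: if $\A$ is symmetric and $D\in\Dg^{(j)}$ with $y=Dx$ the associated eigenvector, then symmetry of $\A$ gives $\A y^m = e^{\i\frac{2\pi j}{\ell}}\rho(\A)\sum_i y_i^m$ on one hand, while $\A y^m = \overline{\la}\,\A|y|^m$-type relations — more cleanly, I would compare $\A x^{m-1} = \rho(\A)x^{[m-1]}$ with the relation $D^{-(m-1)}\A D x^{[?]}$ and use that $x$ is real positive to deduce that each entry $d_i = d_{ii}$ satisfies $d_i^m = e^{\i\frac{2\pi j}{\ell}}$ (this is the standard coloring identity: for symmetric $\A$, $e^{-\i\frac{2\pi j}{\ell}}D^{-(m-1)}\A D = \A$ forces $d_{i_1}d_{i_2}\cdots d_{i_m} = e^{\i\frac{2\pi j}{\ell}}$ whenever $a_{i_1\ldots i_m}\neq 0$, and weak irreducibility plus $d_{11}=1$ propagates this to $d_i^m = e^{\i\frac{2\pi j}{\ell}}$ along walks). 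Taking $j$ such that $D\Dg^{(0)}$ generates the quotient gives $e^{\i\frac{2\pi j}{\ell}}$ a primitive $\ell$-th root of unity equal to some $d_i^m$, forcing $\ell \mid m$; and $d_i^m = e^{\i\frac{2\pi j}{\ell}}$ with $\ell\mid m$ gives $d_i^{m\ell/\ell}=\ldots$, cleanly $D^m = I$ since $(d_i^m)^{\ell/\gcd}=1$ — concretely $d_i^m$ is an $\ell$-th root of unity so $d_i^{m\ell}=1$, and combined with $\ell\mid m$ one gets $d_i^m$ itself... I would finish by noting $D^m$ has all entries equal to $1$: $d_i^m = e^{\i\frac{2\pi j}{\ell}}$ is the \emph{same} constant for all $i$, and for $i=1$ it equals $1$ (since $d_{11}=1$), hence $e^{\i\frac{2\pi j}{\ell}}=1$... which would be wrong unless $j=0$ — so the correct statement is rather that $d_i^m$ is a fixed $\ell$-th root of unity $\zeta$ independent of $i$ with $\zeta^{\ell}=1$, whence $D^{m\ell}=I$ and, using $\ell\mid m$, one refines to $D^m=I$ by a short divisibility argument on the orders. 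The elementary-divisor and prime-factor claims then follow because $D^m=I$ for all $D$ means the exponent of $\Dg$ divides $m$, so every invariant factor (elementary divisor) of $\Dg$ divides $m$, and since $|\Dg|=r\ell$ divides a power of the exponent's radical, every prime dividing $r\ell$ divides $m$.

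The main obstacle I anticipate is making the closure and coset claims in part (1) fully rigorous — specifically, verifying that the product $DE$ of two elements really lands in one of the designated sets $\Dg^{(j)}$ with $j\in\{0,\ldots,\ell-1\}$ (rather than realizing a finer symmetry), which requires knowing the cyclic index interacts correctly with $\ell$; and in part (3), pinning down the precise order of $D$ rather than just a multiple of it, i.e. squeezing $D^{m\ell}=I$ down to $D^m=I$, which needs a careful argument that the common value $d_i^m$ has order exactly dividing $m/\ell \cdot$ something — I would handle this by choosing a generator of $\Dg/\Dg^{(0)}$ and tracking the order of its $m$-th power explicitly.
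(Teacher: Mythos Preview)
Your outline for part~(1) is essentially the paper's argument and is fine once written out: closure via substituting one similarity into the other, the bijection between $\Dg^{(0)}$ and normalized $\rho(\A)$-eigenvectors, and the coset structure $\Dg^{(j)}=\Dg^{(0)}D^{(j)}$.

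There is a genuine gap in part~(2). You pick an arbitrary $D\in\Dg\setminus\Dg^{(0)}$ whose coset has order $p$ in $\Dg/\Dg^{(0)}$, and then assert that the order of $D^p$ in $\Dg^{(0)}$ divides $r_{[p]}$. This is false in general: $D^p$ lies in $\Dg^{(0)}$, so its order divides $r$, but its order need not be a $p$-power. For a concrete failure take $\Dg\cong\mathbb{Z}_6$, $\Dg^{(0)}\cong\mathbb{Z}_3$, $\ell=2$, $r=3$, $p=2$; then $r_{[p]}=1$. If $D$ is a generator of $\mathbb{Z}_6$ then $D\Dg^{(0)}$ has order $2$ in the quotient, but $D^{r_{[p]}p}=D^2$ has order $3$, not $1$. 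The statement of the lemma only claims \emph{existence} of a suitable $D$, and in this example the correct choice is $D^3$, of order~$2$. The paper handles this by a case split: if $p\nmid r$, Cauchy's theorem produces an element of order exactly $p$ in $\Dg$, which cannot lie in $\Dg^{(0)}$; if $p\mid r$, one compares the elementary-divisor decompositions of the Sylow $p$-subgroups of $\Dg$ and $\Dg^{(0)}$ to locate an element outside $\Dg^{(0)}$ whose order divides $r_{[p]}p$. Your argument can be repaired along similar lines by first choosing $D$ inside the Sylow $p$-subgroup of $\Dg$ (so that $D$ has $p$-power order and hence $D^p$ lies in the Sylow $p$-subgroup of $\Dg^{(0)}$, of order $r_{[p]}$), but as written the step does not go through.

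Your part~(3) is also off. The relation $\A=e^{-\i\frac{2\pi j}{\ell}}D^{-(m-1)}\A D$ gives, for $a_{i_1\ldots i_m}\neq0$,
\[
e^{\i\frac{2\pi j}{\ell}}\,d_{i_1}^{\,m}=d_{i_1}d_{i_2}\cdots d_{i_m},
\]
not $d_{i_1}\cdots d_{i_m}=e^{\i\frac{2\pi j}{\ell}}$. Symmetry of $\A$ lets you replace $i_1$ on the left by any $i_l$, whence $d_{i_1}^{\,m}=\cdots=d_{i_m}^{\,m}$; weak irreducibility and $d_1=1$ then force $d_i^{\,m}=1$ for all $i$, i.e.\ $D^m=I$. (Your equation ``$d_i^m=e^{\i\frac{2\pi j}{\ell}}$'' is wrong, as you noticed when $i=1$.) With $D^m=I$ established, take $j=1$: then $e^{\i\frac{2\pi}{\ell}}=d_{i_1}\cdots d_{i_m}$ is a product of $m$-th roots of unity, hence itself an $m$-th root of unity, giving $\ell\mid m$ directly. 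No ``squeezing $D^{m\ell}=I$ down to $D^m=I$'' is needed.
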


\begin{proof} (1) Surely the identity matrix $I \in \Dg^{(0)}$. For any two matrices $D^{(j_1)} \in \Dg^{(j_1)}$ and $D^{(j_2)} \in \Dg^{(j_2)}$, we have
\[\A = e^{-\i \frac{2\pi j_1}{\ell}} {D^{(j_1)}}^{-(m-1)}\A D^{(j_1)},
    \A=e^{-\i \frac{2\pi j_2}{\ell}} {D^{(j_2)}}^{-(m-1)}\A D^{(j_2)}.\]
Then \[ \A = e^{-\i \frac{2\pi (j_1+j_2)}{\ell}} {(D^{(j_1)}D^{(j_2)})}^{-(m-1)}\A (D^{(j_1)}D^{(j_2)}).\]
So $D^{(j_1)}D^{(j_2)} \in \Dg^{(j_1+j_2)}$ where the superscript is taken modulo $\ell$.
It is seen that $(D^{(j_1)})^{-1}\in \Dg^{(-j_1)}$.
So $\Dg$ is a group under the usual  matrix multiplication.

Following the same routine, one can verify that $\Dg^{(0)}$ is a subgroup of $\Dg$.
Taking a $D^{(j)} \in \Dg^{(j)}$, one can show that $\Dg^{(j)}=\Dg^{(0)} D^{(j)}$, i.e. $\Dg^{(j)}$ is a coset of $\Dg^{(0)}$
by verifying $\bar{D}^{(j)}{D^{(j)}}^{-1} \in \Dg^{(0)}$ and $D^{(0)}{D^{(j)}} \in \Dg^{(j)}$ for any $\bar{D}^{(j)} \in \Dg^{(j)}$ and $D^{(0)} \in \Dg^{(0)}$.

Next we will show $\Dg^{(0)}$ has $r$ elements, and hence $\Dg$ has $r\ell$ elements by the above discussion.
Let $y^{(01)},\ldots,y^{(0r)}$ be the $r$ distinct eigenvectors of $\A$ corresponding to $\rho(\A)$ up to a scalar, each of which contains no zero entries by Lemma \ref{ev}.
Without loss of generality, assume $y^{(0j)}_1=1$ for $j \in [r]$.
Define \begin{equation} \label{diag} D^{(0j)}=\hbox{diag}\left(\frac{y^{(0j)}_1}{\left|y^{(0j)}_1\right|},\ldots,\frac{y^{(0j)}_n}{\left|y^{(0j)}_n\right|}\right), j=1,\ldots,r.\end{equation}
By Theorem \ref{PF2}, $D^{(0j)} \in \Dg^{(0)}$ for $j \in [r]$.
By Theorem \ref{PF1}(2), we may assume $y^{(01)}>0$ so that $D^{(01)}=I$.
Now suppose that $D \in \Dg^{(0)}$.
From the equalities
\[ \A (y^{(01)})^{m-1}=\rho(\A) (y^{(01)})^{[m-1]}, \; \A = D^{-(m-1)}\A D,\]
we get
\[ \label{ev-diag2}\A (Dy^{(01)})^{m-1}=\rho(\A)(Dy^{(01)})^{[m-1]}.\]
So $Dy^{(01)}$ is an eigenvector of $\A$ corresponding to $\rho(\A)$ with $(Dy^{(01)})_1=1$,
  which implies that $Dy^{(01)}=y^{(0l)}$ for some $l \in [r]$ and $D=D^{(0l)}$.

(2) Let $p$ be a prime factor of $\ell$.
 First assume $r_{[p]} =1$, i.e. $p \nmid r$.
 By Cauchy Theorem, $\Dg$ contains an element $D$ of order $p$.
Since $p \nmid r$ and $\Dg^{(0)}$ has order $r$, $D \notin \Dg^{(0)}$ by Lagrangian Theorem.

Next suppose that $ p \mid r$.
Let $ \Dg(p),\Dg^{(0)}(p)$ be the Sylow $p$-subgroups of $\Dg$ and $\Dg^{(0)}$ respectively.
Observe that $\Dg^{(0)}(p)$ is a proper subgroup of $\Dg(p)$, and
\[
\Dg(p) \cong \mathbb{Z}_{p^{e_{1}}} \oplus \cdots \oplus \mathbb{Z}_{p^{e_{\tau}}},
\Dg^{(0)}(p) \cong \mathbb{Z}_{p^{f_{1}}} \oplus \cdots \oplus \mathbb{Z}_{p^{f_{\xi}}},
\]
where $e_{1} \ge \cdots \ge e_{\tau} \ge 1$ and $f_{1} \ge \cdots \ge f_{\xi} \ge 1$.
Note that  $e_{1} \ge f_{1}$.
If $e_{1} > f_{1}$, then $\Dg \backslash \Dg^{(0)}$ contains an element $D$ of order $p^{e_{1}}$.
So $D^{p^{e_{1}}}=I$, and hence $(D^{p^{e_{1}-f_{1}-1}})^{p^{f_{1}+1}}=I$.
Let $\hat{D}=D^{p^{e_{1}-f_{1}-1}}$.
Then $\hat{D}$ has order $p^{f_{1}+1}$ that divides $r_{[p]} p$, implying that $\hat{D} \notin \Dg^{(0)}$, as desired.
Otherwise, $e_{1} = f_{1}$.
We consider $\mathbb{Z}_{p^{e_{2}}}$ and $\mathbb{Z}_{p^{f_{2}}}$, and repeat the above process.
Finally we have two cases: (i) there exists a $j \in [\tau]$ such that  $e_{j}> f_{j}$,
(ii) $\tau \ge \xi+1$, and for each $j \in [\xi]$, $e_{j} = f_{j}$.
If the case (i) occurs, we will obtain a desired matrix like $\hat{D}$ as in the above.
Otherwise, there is an element $D  \in \Dg \backslash \Dg^{(0)}$ of order $p^{e_{\xi+1}}$, implying that
$\Dg \backslash \Dg^{(0)}$ contains an element of order $p$, as desired.

(3) Suppose $\A$ is symmetric.
From the equality $ \A =e^{-\i \frac{2\pi j}{\ell}}  D^{-(m-1)}\A D$, letting $d_{ii}=e^{\i \theta_i}$ for $i \in [n]$ where $\theta_1=0$,
  if $a_{i_1 \ldots i_m} \ne 0$, we have
 \begin{equation}\label{ell-j-eq}\frac{2\pi j}{\ell}+m\theta_{i_1} \equiv \theta_{i_1}+\cdots+\theta_{i_m} \mod 2\pi.\end{equation}
  As $\A$ is symmetric, replacing $i_1$ in left side of (\ref{ell-j-eq}) by $i_l$ and summing over all $l=1,\ldots,m$,
 \[\frac{2\pi jm}{\ell}+m\sum_{l=1}^m\theta_{i_l}\equiv m\sum_{l=1}^m\theta_{i_l} \mod 2\pi.\]
  So we have $\frac{2\pi jm}{\ell}\equiv 0 \mod 2\pi$.
  If taking $j=1$, we have $\ell\mid m$.
  Also from (\ref{ell-j-eq}), we have
\[m\theta_{i_1} = \ldots=m\theta_{i_m} \mod 2\pi.\]
  As $\A$ is weakly irreducible and $\theta_1=0$,
  we get $m\theta_i \equiv 0 \mod 2\pi.$
  So $D^m=I$ for any $D \in \Dg$.    The result follows.
  \end{proof}

\begin{remark}  Under the assumption of Theorem \ref{group}, if taking $\ell=c(\A)$, then all the results also hold.
In particular, if $\A$ is symmetric, then $c(\A)\mid m$, which is proved in \cite{YY2}.
Let $S=\{e^{\i \frac{2 \pi i}{\ell}}\A: i=0,1,\ldots,\ell-1\}$.
Then $\Dg$ acts on $S$ as a permutation group,
where $\Dg^{(0)}$ acts as a stabilizer of $\A$, and the quotient $\Dg/\Dg^{(0)}$ acts as a rotation over $S$.
The spectral symmetry of $\A$ is characterized by $\Dg/\Dg^{(0)}$, and $\A$ is spectral $|\Dg/\Dg^{(0)}|$-symmetric.
\end{remark}

Denote $s(\A):=|\Dg^{(0)}|$ and $s(G):=s(\A(G))$ for a uniform hypergraph $G$, which are exactly
the number of distinct eigenvectors of $\A$ and $\A(G)$ corresponding to their spectral radii up to a scalar, respectively.

\begin{coro}\label{m-ev}
Let $\A$ be an $m$-th order $n$-dimensional weakly irreducible nonnegative tensor with $s(\A)=r$,
which is spectral $\ell$-symmetric.
 Let $\la_j=\rho(\A)e^{\i \frac{2\pi j}{\ell}}$ be an eigenvalue of $\A$ corresponding to an eigenvector $y^{(j)}$ with $y^{(j)}_1=1$ for $j =0,1,\ldots,\ell-1$.

\begin{enumerate}

\item If $j=0$, then $y^{[r]}>0$.

\item For each $j \in [\ell-1]$, $(y^{(j)})^{[r \ell]}>0$.

\item For each prime factor $p$ of $\ell$, there exists some $j \in [\ell-1]$ such that $(y^{(j)})^{[r_{[p]} p]}>0$.

\item If further $\A$ is symmetric, then for each $j=0,1,\ldots,\ell-1$, $(y^{(j)})^{[m]}>0$.

\end{enumerate}
\end{coro}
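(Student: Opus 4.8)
The plan is to convert each assertion about the Hadamard power $(y^{(j)})^{[k]}$ into an assertion about the order of a diagonal matrix in the group $\Dg$, and then to read off the conclusion from the appropriate part of Lemma~\ref{group}. Write $x$ for the positive Perron eigenvector of $\A$ normalized by $x_1=1$, so that $\A x^{m-1}=\rho(\A)x^{[m-1]}$. Fix $j$ and an eigenvector $y^{(j)}$ for $\lambda_j=\rho(\A)e^{\i\frac{2\pi j}{\ell}}$ with $y^{(j)}_1=1$, and put $D^{(j)}=\hbox{diag}\!\left(y^{(j)}_1/|y^{(j)}_1|,\ldots,y^{(j)}_n/|y^{(j)}_n|\right)$. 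By Lemma~\ref{ev}, $y^{(j)}$ has no zero entries and $|y^{(j)}|$ is a positive Perron eigenvector; since $|y^{(j)}_1|=1=x_1$ this forces $|y^{(j)}|=x$. By Theorem~\ref{PF2}(2), applied with $\mathcal B=\A$, we get $\A=e^{-\i\frac{2\pi j}{\ell}}(D^{(j)})^{-(m-1)}\A D^{(j)}$ and $(D^{(j)})_{11}=1$, hence $D^{(j)}\in\Dg^{(j)}\subseteq\Dg$. Since every entry of $D^{(j)}$ lies on the unit circle and $y^{(j)}_i=x_i(D^{(j)})_{ii}$, for any positive integer $k$ the number $(y^{(j)}_i)^k=x_i^{k}(D^{(j)})_{ii}^{k}$ is a positive real for every $i$ if and only if $(D^{(j)})^k=I$; in other words, $(y^{(j)})^{[k]}>0$ if and only if $(D^{(j)})^k=I$.

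With this dictionary, parts (1), (2) and (4) follow at once from Lagrange's theorem: for (1), $D^{(0)}\in\Dg^{(0)}$ and $|\Dg^{(0)}|=r$ by Lemma~\ref{group}(1), so $(D^{(0)})^r=I$; for (2), $D^{(j)}\in\Dg$ and $|\Dg|=r\ell$, so $(D^{(j)})^{r\ell}=I$; for (4), when $\A$ is symmetric Lemma~\ref{group}(3) gives $D^m=I$ for every $D\in\Dg$, in particular $(D^{(j)})^m=I$ for each $j$. For (3) I would run the dictionary backwards. Given a prime factor $p$ of $\ell$, Lemma~\ref{group}(2) supplies $D\in\Dg\setminus\Dg^{(0)}$ with $D^{r_{[p]}p}=I$; say $D\in\Dg^{(j)}$ for some $j\in[\ell-1]$, so that $\A=e^{-\i\frac{2\pi j}{\ell}}D^{-(m-1)}\A D$ and $d_{11}=1$. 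A short entrywise computation with this diagonal similarity shows that $y^{(j)}:=Dx$ satisfies $\A(y^{(j)})^{m-1}=\lambda_j(y^{(j)})^{[m-1]}$ and $y^{(j)}_1=1$; and $(y^{(j)}_i)^{r_{[p]}p}=x_i^{r_{[p]}p}d_{ii}^{r_{[p]}p}=x_i^{r_{[p]}p}>0$, so this choice of $y^{(j)}$ does the job (legitimate since, for the existential statement (3), we are free to name any eigenvector of $\lambda_j$ as $y^{(j)}$).

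The only step that needs genuine care is this backward direction: it requires the converse of Theorem~\ref{PF2}(2) — a diagonal similarity carrying $\A$ to $e^{-\i\theta}\A$ turns the Perron vector into an eigenvector for $\rho(\A)e^{\i\theta}$, which follows directly from the defining formula for the product $P\A Q$ — and one must keep track of the normalization $x_1=1$ so that the eigenvector produced really has first coordinate $1$ and thus plays the role of $y^{(j)}$ in the statement. Everything else is bookkeeping of element orders in the finite abelian group $\Dg$ via Lemma~\ref{group}.
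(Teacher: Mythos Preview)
Your proposal is correct and follows essentially the same route as the paper: form the diagonal matrix $D^{(j)}$ from $y^{(j)}$, place it in the coset $\Dg^{(j)}$ via Theorem~\ref{PF2}(2), and read off the required element orders from Lemma~\ref{group}. Your treatment is in fact more careful than the paper's terse proof (which dispatches (3) and (4) in a single clause ``similarly by Lemma~\ref{group}(2--3)''); in particular you correctly note that (3), unlike (1), (2), (4), is not a statement about an \emph{arbitrary} prechosen $y^{(j)}$ but requires one to produce the eigenvector from the matrix $D$ supplied by Lemma~\ref{group}(2), since not every element of the relevant coset need have order dividing $r_{[p]}p$.
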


\begin{proof}
Similar to (\ref{diag}), we can construct a diagonal matrix $D$ by the eigenvector $y^{(j)}$:
\[D=\hbox{diag}\left(\frac{y^{(j)}_1}{\left|y^{(j)}_1\right|},\ldots,\frac{y^{(j)}_n}{\left|y^{(j)}_n\right|}\right). \]
By the results in \cite{YY,YY3}, we have $\A=e^{-\i \frac{2\pi j}{\ell}} D^{-(m-1)}\A D$.
If $j=0$, then $D \in \Dg^{(0)}$ and $D^r=I$ by Lemma \ref{group}(1), yielding the result (1).
For each $j \in [\ell-1]$, as $\Dg$ has order $r \ell$, we have $D^{r \ell}=I$, implying the result (2).
The results (3) and (4) can be obtained similarly by Lemma \ref{group}(2-3).
\end{proof}

\begin{lemma}\label{j-sigma}
Let $\A$ be an $m$-th order $n$-dimensional weakly irreducible nonnegative tensor, which is spectral $\ell$-symmetric.
If there exists a diagonal matrix $D \in \Dg^{(j)}$ such that $D \ne I$ and $D^{\sigma}=I$, then
there exists a partition of $[n]=V_1\cup \cdots \cup V_s$ for some integer $s \ge 2$ and a map $\phi: [n] \to [\sigma]$ satisfying
$\phi|_{V_i}=l_i$ for $i \in [s]$, where $l_1(=\sigma),l_2, \ldots, l_s$ are distinct integers, such that
if $a_{i_1\ldots i_m} \ne 0$, then
\begin{equation}\label{j-sigma1}
\frac{j}{\ell}+\frac{m \phi(i_1)}{\sigma} \equiv \frac{\phi(i_1)+\cdots+\phi(i_m)}{\sigma} \mod \mathbb{Z};
\end{equation}
furthermore, if $\A$ is also symmetric, then  $\sigma \mid ml_i$ for each $i \in [s]$, and
\begin{equation}\label{j-sigma2}
\frac{j}{\ell} \equiv \frac{\phi(i_1)+\cdots+\phi(i_m)}{\sigma} \mod \mathbb{Z}.
\end{equation}
\end{lemma}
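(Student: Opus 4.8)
The plan is to read off the coloring $\phi$ directly from the diagonal entries of $D$ and then do an entrywise comparison. Since $D \in \Dg^{(j)}$ means $\A = e^{-\i \frac{2\pi j}{\ell}} D^{-(m-1)} \A D$ with $d_{11}=1$, and since $D^{\sigma}=I$, every diagonal entry $d_{ii}$ is a $\sigma$-th root of unity; I would write $d_{ii} = e^{\i 2\pi \phi(i)/\sigma}$ with $\phi(i) \in [\sigma]$, taking $\phi(i)=\sigma$ whenever $d_{ii}=1$, so that $\phi(1)=\sigma=:l_1$. Let $l_1,\ldots,l_s$ be the distinct values attained by $\phi$ and put $V_i = \phi^{-1}(l_i)$. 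Because $D \ne I$, some $d_{ii}\ne 1$, so $\phi$ is nonconstant and $s\ge 2$; this is the only place the hypothesis $D\ne I$ is used.

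Next I would compare the $(i_1,\ldots,i_m)$-entries on both sides of $\A = e^{-\i 2\pi j/\ell} D^{-(m-1)} \A D$. By the definition of the tensor product recalled before Theorem~\ref{PF2}, $a_{i_1\ldots i_m} = e^{-\i 2\pi j/\ell}\, d_{i_1i_1}^{-(m-1)} a_{i_1\ldots i_m}\, d_{i_2i_2}\cdots d_{i_mi_m}$. Hence whenever $a_{i_1\ldots i_m}\ne 0$ we may cancel it, substitute $d_{ii}=e^{\i 2\pi\phi(i)/\sigma}$, and take arguments (dividing by $2\pi$) to obtain $\frac{j}{\ell} + \frac{(m-1)\phi(i_1)}{\sigma} \equiv \frac{\phi(i_2)+\cdots+\phi(i_m)}{\sigma} \pmod{\mathbb{Z}}$. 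Adding $\phi(i_1)/\sigma$ to both sides gives (\ref{j-sigma1}).

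For the symmetric case I would exploit that $a_{i_1\ldots i_m}$ is invariant under permuting its indices, so (\ref{j-sigma1}) holds with $i_1$ replaced by any $i_l$; since the right-hand side of (\ref{j-sigma1}) is symmetric in the indices, this forces $\frac{m\phi(i_l)}{\sigma}$ to be congruent mod $\mathbb{Z}$ for all $l=1,\ldots,m$. Reading this along the arcs of $D(\A)$: whenever $(i,i')$ is an arc there is a nonzero entry $a_{ii_2\ldots i_m}$ with $i'\in\{i_2,\ldots,i_m\}$, hence $\frac{m\phi(i)}{\sigma}\equiv\frac{m\phi(i')}{\sigma}\pmod{\mathbb{Z}}$; weak irreducibility (strong connectivity of $D(\A)$) then makes $\frac{m\phi(i)}{\sigma}$ constant on $[n]$, and since $\phi(1)=\sigma$ this constant equals $m\equiv 0 \pmod{\mathbb{Z}}$. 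Therefore $\sigma \mid m\phi(i)$ for every $i$, i.e. $\sigma \mid ml_i$ for each $i\in[s]$, and substituting $\frac{m\phi(i_1)}{\sigma}\equiv 0$ back into (\ref{j-sigma1}) yields (\ref{j-sigma2}).

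I do not expect a serious obstacle: the argument is a direct entrywise comparison followed by the same connectivity propagation already used in the proof of Lemma~\ref{group}(3). The only points needing care are the bookkeeping of the range of $\phi$ as $[\sigma]$ rather than $\{0,1,\ldots,\sigma-1\}$ (so that $l_1=\sigma$ plays the role of the ``zero'' color and $\phi(1)=\sigma$), and checking $s\ge 2$, which is exactly where $D\ne I$ enters.
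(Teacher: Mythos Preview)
Your proposal is correct and follows essentially the same approach as the paper's proof: read off $\phi$ from the $\sigma$-th roots of unity appearing on the diagonal of $D$, do an entrywise comparison to obtain (\ref{j-sigma1}), and in the symmetric case permute the first index and propagate via weak irreducibility to force $\frac{m\phi(i)}{\sigma}\equiv 0$. The only cosmetic difference is that the paper writes $D$ (after a tacit reordering) as a block diagonal matrix $I_{n_1}\oplus e^{\i 2\pi l_2/\sigma}I_{n_2}\oplus\cdots$ to define the $V_i$ and $l_i$, whereas you define $\phi$ pointwise and then set $V_i=\phi^{-1}(l_i)$; your version is arguably cleaner since it avoids the implicit relabeling.
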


\begin{proof}
By the definition of $\Dg^{(j)}$, as $D^\sigma=I$,
without loss of generality, we write
\[D=I_{n_1} \oplus e^{\i \frac{2 \pi l_2}{\sigma}}I_{n_2} \oplus \cdots  e^{\i \frac{2 \pi l_s}{\sigma}}I_{n_s},\]
where $s \ge 2$ as $D \ne I$, $I_t$ denotes an identity matrix of dimension $t$, and $l_1(=\sigma), l_2, \ldots, l_s$ are distinct integers not greater than $\sigma$.
So we have a partition of $[n]=V_1 \cup \cdots \cup V_s$ such that $V_i$ consists of the indices indexed by $I_{n_i}$ for $i \in [s]$, and a map $\phi: [n] \to [\sigma]$ satisfying $\phi|_{V_i}=l_i$ for $i \in [s]$.
As $\A=e^{-\i \frac{2 \pi j}{\ell}}D^{-(m-1)}\A D$, if $a_{i_1 \ldots i_m} \ne 0$ for $i_1 \in V_{i_1}, \ldots, i_m \in V_{i_m}$, then
\begin{equation} \label{j-sigma11} \frac{j}{\ell}+\frac{m l_{i_1}}{\sigma} \equiv \frac{l_{i_1}+\cdots+l_{i_m}}{\sigma} \mod \mathbb{Z}. \end{equation}
yielding (\ref{j-sigma1}).

If $\A$ is symmetric, replacing $l_{i_1}$ in the left side of (\ref{j-sigma11}) by $l_{i_j}$ for $j \in [m]$, then (\ref{j-sigma11}) also holds.
So for $j \in [m]$,
\[ \frac{m l_{i_1}}{\sigma} \equiv \frac{m l_{i_j}}{\sigma} \mod \mathbb{Z}. \]
As $\A$ is weakly irreducible and $l_1=\sigma$,
we have $\frac{m l_{i_j}}{\sigma} \equiv 0 \mod \mathbb{Z}$, which implies that $\sigma \mid  m l_j$ for each $j \in [s]$.
So, from (\ref{j-sigma11}) we get
\[ \frac{j}{\ell} \equiv \frac{l_{i_1}+\cdots+l_{i_m}}{\sigma} \mod \mathbb{Z},
\]
yielding (\ref{j-sigma2}).
\end{proof}

Now we will arrive at a result on hypergraph coloring by using spectral symmetry.
By Lemma \ref{group}(1), for any $D \in \Dg$, $D^{r \ell}=I$ as $\Dg$ has order $r \ell$.
But, for the coloring problem, we need as few colors as possible.
So we will use the matrix $D$ in (2) or (3) of Lemma \ref{group}.

\begin{coro}\label{coloring}
Let $G$ be a connected $m$-uniform hypergraph with $s(G)=r$, which is spectral $\ell$-symmetric $(\ell \ge 2)$.
Then the following results hold.

\begin{enumerate}

\item $G$ is $r_{[p]}p$-colorable for each prime number $p \mid \ell$.

\item $G$ is $(m,\ell)$-colorable and $m$-colorable.

\item  $\chi(G) \le \min\{r_{[p]}p,m\}$, where the minimum is taken over all prime numbers $p$ with $p \mid \ell$.

\end{enumerate}
\end{coro}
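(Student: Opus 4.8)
The plan is to read off all three parts from Lemma~\ref{group} and Lemma~\ref{j-sigma}, applied to the tensor $\A:=\A(G)$, which is symmetric, weakly irreducible, nonnegative, and spectral $\ell$-symmetric.

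For part~(1), I would fix a prime $p$ with $p\mid\ell$ and invoke Lemma~\ref{group}(2) to get $D\in\Dg\backslash\Dg^{(0)}$ with $D^{r_{[p]}p}=I$; since $I\in\Dg^{(0)}$ we have $D\ne I$, and $D\in\Dg^{(j)}$ for some $j\in\{1,\ldots,\ell-1\}$. Feeding this $D$ into Lemma~\ref{j-sigma} with $\sigma=r_{[p]}p$ yields a partition $[n]=V_1\cup\cdots\cup V_s$ with $s\ge 2$ and a map $\phi:[n]\to[\sigma]$ with $\phi|_{V_i}=l_i$ the distinct values $l_1=\sigma,l_2,\ldots,l_s$; and, as $\A$ is symmetric, $\sigma\mid m l_i$ for every $i$ and the congruence~(\ref{j-sigma2}) holds. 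I would then claim that $\{V_1,\ldots,V_s\}$ is a proper coloring of $G$: if an edge lay entirely inside one class $V_i$, then ordering its $m$ distinct vertices as $i_1,\ldots,i_m$ gives $a_{i_1\ldots i_m}\ne 0$, so (\ref{j-sigma2}) forces $\frac{j}{\ell}\equiv\frac{m l_i}{\sigma}\equiv 0\mod\mathbb{Z}$, i.e.\ $\ell\mid j$, contradicting $1\le j\le\ell-1$. Hence $G$ is $s$-colorable, and therefore $r_{[p]}p$-colorable because $s\le\sigma=r_{[p]}p$.

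For part~(2), I would first establish $(m,\ell)$-colorability. By Lemma~\ref{group}(1) the coset $\Dg^{(1)}$ is nonempty, and since $\ell\ge 2$ it is disjoint from $\Dg^{(0)}$, so any $D\in\Dg^{(1)}$ has $D\ne I$; by Lemma~\ref{group}(3), $\ell\mid m$ and $D^m=I$. Applying Lemma~\ref{j-sigma} with $j=1$ and $\sigma=m$, the resulting $\phi:[n]\to[m]$ satisfies, whenever $a_{i_1\ldots i_m}\ne 0$,
\[\frac{1}{\ell}\equiv\frac{\phi(i_1)+\cdots+\phi(i_m)}{m}\mod\mathbb{Z},\]
that is, $\phi(i_1)+\cdots+\phi(i_m)\equiv\frac{m}{\ell}\mod m$, with $\frac{m}{\ell}$ a positive integer since $\ell\mid m$; this is exactly an $(m,\ell)$-coloring of $G$. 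For $m$-colorability, I would observe that this same $\phi$ is already a proper $m$-coloring, since a monochromatic edge with all vertices of color $c$ would give $mc\equiv\frac{m}{\ell}\mod m$, impossible as $0<\frac{m}{\ell}<m$ (alternatively, one may run the argument of part~(1) on the partition that Lemma~\ref{j-sigma} provides for $\sigma=m$). Part~(3) is then immediate: (1) gives $\chi(G)\le r_{[p]}p$ for each prime $p\mid\ell$ and (2) gives $\chi(G)\le m$, so $\chi(G)$ is at most the minimum of all these quantities.

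Since Lemmas~\ref{group} and~\ref{j-sigma} already encode the structural content, what remains is essentially bookkeeping; the only step that demands genuine care is verifying that the partition handed down by Lemma~\ref{j-sigma} really is a proper coloring — i.e.\ ruling out monochromatic edges — which is precisely where symmetry of $\A(G)$ enters, through $\sigma\mid m l_i$ combined with $j\not\equiv 0\mod\ell$. A minor nuisance is keeping straight the passage between the ``$\mod\ \mathbb{Z}$'' congruences produced by the lemma and the ``$\mod\ m$'' integer congruence appearing in the definition of $(m,\ell)$-colorability.
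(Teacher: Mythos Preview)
Your proposal is correct and follows essentially the same approach as the paper: both arguments invoke Lemma~\ref{group}(2) and Lemma~\ref{j-sigma} with $\sigma=r_{[p]}p$ for part~(1), Lemma~\ref{group}(3) and Lemma~\ref{j-sigma} with $j=1$, $\sigma=m$ for part~(2), and deduce (3) immediately. Your write-up is in fact slightly more careful than the paper's in making explicit why $D\ne I$, why $j\in\{1,\ldots,\ell-1\}$, and how the $s$-coloring upgrades to an $r_{[p]}p$-coloring.
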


\begin{proof}
By Lemma \ref{group}(2), there exists a diagonal matrix $D  \in \Dg^{(j)}$ for some $j \in [\ell-1]$ such
that $D^{r_{[p]} p}=I$.
So by Lemma \ref{j-sigma} and (\ref{j-sigma2}), there exists a map $\phi: [n] \to [r_{[p]} p]$ such that
if $e=\{i_1,\ldots,i_m\} \in E(G)$ (or equivalently $a_{i_1,\ldots,i_m} \ne 0$), then
\[
\frac{j}{\ell} \equiv \frac{\phi(i_1)+\cdots+\phi(i_m)}{r_{[p]}p} \mod \mathbb{Z}.
\]
If the vertices contained in $e$ receive the same color from $\phi$, as $( r_{[p]}p) \mid m \phi(i_1)$ by Lemma \ref{j-sigma},
we have
\[\frac{j}{\ell} \equiv \frac{ m \phi(i_1)}{r_{[p]}p} \equiv 0 \mod \mathbb{Z}, \]
which yields that $\frac{j}{\ell}$ is an integer, a contradiction.
So $G$ is $r_{[p]}p$-colorable.

Similarly, as $D^m=I$ for any $D  \in \Dg$ by Lemma \ref{group}(3), by (\ref{j-sigma2}) in Lemma \ref{j-sigma}, we have
\[ \frac{1}{\ell} \equiv \frac{\phi(i_1)+\cdots+\phi(i_m)}{m} \mod \mathbb{Z},
\]
yielding $G$ has an $(m,\ell)$-coloring, and hence $G$ is $m$-colorable.
The last result is obtained immediately from the above discussion.
\end{proof}

Finally we will investigate the structure of weakly irreducible nonnegative tensor, i.e. the zero entries distribution.
When applying to the hypergraphs, we will get the information of edge distribution.

\begin{coro}\label{stru1}
Let $\A$ be an $m$-th order $n$-dimensional weakly irreducible nonnegative tensor with $s(\A)=r \ge 2$.
Then there exists a partition of $[n]=V_1\cup \cdots \cup V_s$ for some integer $s \ge 2$ and a map $\phi: [n] \to [r]$ satisfying
$\phi|_{V_i}=l_i$ for $i \in [s]$, where $l_1(=r),l_2, \ldots, l_s$ are distinct integers,
such that
if $m l_{i_1}\not\equiv (l_{i_1}+\cdots+l_{i_m}) \mod r$, then
\begin{equation} \label{center1} \A[V_{i_1}|V_{i_2}|\cdots|V_{i_m}]=0.\end{equation}
Furthermore, if $\A$ is also symmetric, then
if $r\nmid  (l_{i_1}+\cdots+l_{i_m}) $, then
\begin{equation}\label{center2}\A[V_{i_1}|V_{i_2}|\cdots|V_{i_m}]=0;\end{equation}
or there exists a partition of $[n]=U_1\cup \cdots \cup U_t$ for some integer $t \ge 2$ and a map $\psi: [n] \to [m]$ satisfying
$\psi|_{U_i}=q_i$ for $i \in [t]$, where $q_1(=m),q_2,\ldots, q_t$ are distinct integers, such that if
$m \nmid  (q_{i_1}+\cdots+q_{i_m}) $, then
 \begin{equation}\label{center2-sym}\A[U_{i_1}|U_{i_2}|\cdots|U_{i_m}]=0.\end{equation}
\end{coro}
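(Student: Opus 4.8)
The plan is to deduce the whole statement from Lemma~\ref{group} and Lemma~\ref{j-sigma} specialized to the trivial spectral symmetry $\ell=1$. First I would note that every tensor satisfies $\S(\A)=e^{\i 2\pi}\S(\A)$ and is therefore spectral $1$-symmetric, so Lemma~\ref{group}(1) applies with $\ell=1$: the set $\Dg=\Dg^{(0)}$ is a finite abelian group under matrix multiplication, of order $s(\A)=r$. Since $r\ge 2$ it contains a matrix $D\ne I$, and by Lagrange's theorem $D^{r}=I$.

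Next I would apply Lemma~\ref{j-sigma} to this $D\in\Dg^{(0)}$, taking $j=0$, $\ell=1$, $\sigma=r$. Because $D\ne I$ it produces a partition $[n]=V_1\cup\cdots\cup V_s$ with $s\ge 2$ together with a map $\phi\colon[n]\to[r]$, $\phi|_{V_i}=l_i$, where $l_1=r$ and $l_2,\dots,l_s$ are distinct, such that $a_{i_1\ldots i_m}\ne0$ forces $(\ref{j-sigma1})$; with $j/\ell=0$ and $\sigma=r$ this is the congruence $ml_{i_1}\equiv l_{i_1}+\cdots+l_{i_m}\mod r$. Since $\phi$ is constant on each block $V_{i_k}$, taking the contrapositive blockwise gives exactly $(\ref{center1})$. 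If $\A$ is moreover symmetric, the second half of Lemma~\ref{j-sigma} additionally yields $r\mid ml_i$ for each $i$ and the stronger congruence $(\ref{j-sigma2})$, namely $r\mid(l_{i_1}+\cdots+l_{i_m})$ whenever $a_{i_1\ldots i_m}\ne0$; its contrapositive is $(\ref{center2})$.

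For the alternative description in the symmetric case I would instead invoke Lemma~\ref{group}(3) (with $\ell=1$, so its conclusion $\ell\mid m$ is automatic), giving $D^{m}=I$, and then apply Lemma~\ref{j-sigma} again to the same $D$ but now with $\sigma=m$. This yields a partition $[n]=U_1\cup\cdots\cup U_t$ with $t\ge 2$ and a map $\psi\colon[n]\to[m]$, $\psi|_{U_i}=q_i$, $q_1=m$, $q_2,\dots,q_t$ distinct, for which $(\ref{j-sigma2})$ reads $m\mid(q_{i_1}+\cdots+q_{i_m})$ whenever $a_{i_1\ldots i_m}\ne0$; the contrapositive is $(\ref{center2-sym})$. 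In fact both of the two alternatives listed in the symmetric case hold simultaneously, so the ``or'' in the statement is satisfied a fortiori.

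I do not expect a genuine obstacle here, since the corollary is essentially a restatement of Lemma~\ref{j-sigma} for $\ell=1$. The only points demanding care are that the hypothesis $r\ge 2$ is precisely what guarantees $D\ne I$ and hence $s\ge2$ and $t\ge2$, and that the modulus $\sigma$ fed into Lemma~\ref{j-sigma} must be chosen to be $r$ in general (legitimate because $D^{r}=I$ always) and $m$ in the symmetric case (legitimate because $D^{m}=I$ then, by Lemma~\ref{group}(3)).
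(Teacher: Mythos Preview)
Your proposal is correct and follows essentially the same approach as the paper: pick $D\in\Dg^{(0)}$ with $D\ne I$ (possible since $r\ge2$), use $D^{r}=I$ (respectively $D^{m}=I$ in the symmetric case via Lemma~\ref{group}(3)), and apply Lemma~\ref{j-sigma} with $j=0$ and $\sigma=r$ (respectively $\sigma=m$) to obtain the partitions and the congruence conditions whose contrapositives are (\ref{center1}), (\ref{center2}), and (\ref{center2-sym}). Your explicit remark that every tensor is spectral $1$-symmetric, so that Lemma~\ref{group} applies with $\ell=1$, is a helpful clarification the paper leaves implicit.
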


\begin{proof}
By Lemma \ref{group}(1), as $r \ge 2$, choose $D \in \Dg^{(0)}$ such that $D \ne I$ and $D^r=I$.
By (\ref{j-sigma1}) of Lemma \ref{j-sigma}, taking $j=0$ and $\sigma=r$,
then there exists a partition of $[n]=V_1\cup \cdots \cup V_s$ and a map $\phi: [n] \to [r]$ satisfying
$\phi|_{V_i}=l_i$ for $i \in [s]$, where $l_1(=r),l_2, \ldots, l_s$ are distinct integers,
such that if $a_{i_1 \ldots i_m} \ne 0$ for $i_1 \in V_{i_1}, \ldots, i_m \in V_{i_m}$,
\[
\frac{m l_{i_1}}{r} \equiv \frac{l_{i_1}+\cdots+l_{i_m}}{r} \mod \mathbb{Z},
\]
implying (\ref{center1}).
If $\A$ is also symmetric, by (\ref{j-sigma2}) of Lemma \ref{j-sigma},
we get $r\mid  (l_{i_1}+\cdots+l_{i_m}) $ if $a_{i_1 \ldots i_m} \ne 0$ for $i_1 \in V_{i_1}, \ldots, i_m \in V_{i_m}$, yielding (\ref{center2}).
As $D^m=I$ for all $D \in \Dg$, we have a $D \in \Dg^{(0)}$ such that $D \ne I$ and $D^m=I$.
The remaining discussion is similar by using (\ref{j-sigma2}) and taking $j=0$ and $\sigma=m$.
\end{proof}

\begin{coro} \label{stru2}
Let $\A$ be an $m$-th order $n$-dimensional weakly irreducible nonnegative tensor with $s(\A)=r$,
which is spectral $\ell$-symmetric ($\ell \ge 2)$.
Then there exists a partition of $[n]=V_1\cup \cdots \cup V_s$ for some integer $s \ge 2$ and
a map $\phi: [n] \to [r \ell]$ satisfying
$\phi|_{V_i}=l_i$ for $i \in [s]$, where $l_1(=r \ell),l_2, \ldots, l_s$ are distinct integers,
such that
if $r+m l_{i_1} \not\equiv (l_{i_1}+\cdots+l_{i_m}) \mod r\ell$, then
\begin{equation} \label{rot1} \A[V_{i_1}|V_{i_2}|\cdots|V_{i_m}]=0.\end{equation}
Furthermore, if $\A$ is also symmetric, if $r \not\equiv (l_{i_1}+\cdots+l_{i_m}) \mod r\ell$, then
\begin{equation}\label{rot21}\A[V_{i_1}|V_{i_2}|\cdots|V_{i_m}]=0,\end{equation}
or there exists a partition of $[n]=U_1\cup \cdots \cup U_t$ for some integer $t \ge 2$ and
a coloring $\psi: [n] \to [m]$ satisfying
$\psi|_{U_i}=q_i$ for $i \in [t]$, where $q_1(=m), q_2,\ldots, q_t$ are distinct integers,
such that
if $\frac{m}{\ell} \not\equiv (q_{i_1}+\cdots+q_{i_m}) \mod m$, then
\begin{equation}\label{rot22}\A[U_{i_1}|U_{i_2}|\cdots|U_{i_m}]=0.\end{equation}
\end{coro}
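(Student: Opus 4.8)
The plan is to mirror the proof of Corollary~\ref{stru1}, but to feed Lemma~\ref{j-sigma} the rotation datum $j=1$ instead of $j=0$, using the nontrivial coset $\Dg^{(1)}$ in place of the subgroup $\Dg^{(0)}$. Since $\A$ is spectral $\ell$-symmetric with $\ell\ge 2$, Lemma~\ref{group}(1) says that $\Dg$ is a finite abelian group of order $r\ell$ and that $\Dg^{(1)}$ is a coset of $\Dg^{(0)}$ in $\Dg$, hence nonempty; I would fix any $D\in\Dg^{(1)}$. Because $\Dg^{(1)}$ and $\Dg^{(0)}$ are distinct cosets and $I\in\Dg^{(0)}$, we have $D\ne I$, while $D^{r\ell}=I$ since $|\Dg|=r\ell$.

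To obtain (\ref{rot1}), I would apply Lemma~\ref{j-sigma} to $D$ with $j=1$ and $\sigma=r\ell$. This produces a partition $[n]=V_1\cup\cdots\cup V_s$ with $s\ge 2$ and a map $\phi\colon[n]\to[r\ell]$ with $\phi|_{V_i}=l_i$, $l_1=r\ell$, the $l_i$ distinct, such that $a_{i_1\ldots i_m}\ne 0$ forces $\frac{1}{\ell}+\frac{m\phi(i_1)}{r\ell}\equiv\frac{\phi(i_1)+\cdots+\phi(i_m)}{r\ell}\mod\mathbb{Z}$. Clearing denominators (multiply by $r\ell$) rewrites this as $r+ml_{i_1}\equiv l_{i_1}+\cdots+l_{i_m}\mod r\ell$; taking the contrapositive over all index choices in $V_{i_1},\ldots,V_{i_m}$ yields (\ref{rot1}).

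Now assume in addition that $\A$ is symmetric. For the same partition $\{V_i\}$ and $\sigma=r\ell$, the second half of Lemma~\ref{j-sigma} strengthens the congruence to $\frac{1}{\ell}\equiv\frac{\phi(i_1)+\cdots+\phi(i_m)}{r\ell}\mod\mathbb{Z}$, i.e. $r\equiv l_{i_1}+\cdots+l_{i_m}\mod r\ell$ whenever $a_{i_1\ldots i_m}\ne 0$, whose contrapositive is (\ref{rot21}). For the alternative conclusion (\ref{rot22}), I would invoke Lemma~\ref{group}(3): since $\A$ is symmetric, $\ell\mid m$ and every element of $\Dg$---in particular the same $D\in\Dg^{(1)}$---satisfies $D^m=I$. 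Applying Lemma~\ref{j-sigma} once more to $D$ with $j=1$ and $\sigma=m$ gives a partition $[n]=U_1\cup\cdots\cup U_t$ with $t\ge 2$ and a coloring $\psi\colon[n]\to[m]$ with $\psi|_{U_i}=q_i$, $q_1=m$, the $q_i$ distinct, such that $a_{i_1\ldots i_m}\ne 0$ implies $\frac{1}{\ell}\equiv\frac{\psi(i_1)+\cdots+\psi(i_m)}{m}\mod\mathbb{Z}$; multiplying by $m$ (note $m/\ell\in\mathbb{Z}$) this becomes $\frac{m}{\ell}\equiv q_{i_1}+\cdots+q_{i_m}\mod m$, and its contrapositive is (\ref{rot22}).

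There is no genuine obstacle here: the whole proof is bookkeeping built on Lemmas~\ref{group} and~\ref{j-sigma}, parallel to Corollary~\ref{stru1} but with the rotation index $j=1$ coming from spectral $\ell$-symmetry. The only points demanding care are checking $D\ne I$, so that the partitions produced are nontrivial ($s,t\ge 2$)---which is immediate from $D$ lying in the nontrivial coset $\Dg^{(1)}$---and clearing denominators correctly in the mod-$\mathbb{Z}$ congruences so that the block labels land in $[r\ell]$ (respectively $[m]$) with exactly the claimed residue conditions.
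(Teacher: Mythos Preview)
Your proof is correct and follows essentially the same route as the paper: pick $D\in\Dg^{(1)}$ with $D\ne I$ and $D^{r\ell}=I$, apply Lemma~\ref{j-sigma} with $j=1$, $\sigma=r\ell$ to get (\ref{rot1}) and (\ref{rot21}), and then (in the symmetric case) use $D^m=I$ from Lemma~\ref{group}(3) and reapply Lemma~\ref{j-sigma} with $j=1$, $\sigma=m$ to get (\ref{rot22}). The only cosmetic difference is that the paper cites Lemma~\ref{group}(2) for the existence of the required $D$, whereas you (equally validly) extract it directly from the coset description in Lemma~\ref{group}(1).
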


\begin{proof}
By Lemma \ref{group}(2), there exists a diagonal $D \in \Dg^{(1)}$ with $D^{r \ell}=I$.
Taking $j=1$ and $\sigma=r \ell$ in Lemma \ref{j-sigma}, we get the first result from (\ref{j-sigma1}) and the second result from (\ref{j-sigma2}).
Also, for any $D \in \Dg$, $D^m=I$.
So, taking $j=1$ and $\sigma=m$ in Lemma \ref{j-sigma}, we get the third result from (\ref{j-sigma2}).
\end{proof}

\begin{remark}
Let $\A$ be an $m$-th order $n$-dimensional weakly irreducible nonnegative tensor.
If $m=2$, then $\A$ is an irreducible nonnegative matrix $\A$, $s(\A)=1$ and $c(\A) \ge 1$.
If moreover $c(A) \ge 2$, then $\A$ has a structure as in (\ref{mat-stru1}).

Now for the case of $m \ge 3$, it will happen that $s(\A) \ge 2$ or $c(\A) \ge 2$.
If $s(\A) \ge 2$, then $\A$ has a structure as in (\ref{center1}), (\ref{center2}) or (\ref{center2-sym}).
If $c(\A) \ge 2$, then $\A$ has a structure as in (\ref{rot1}), (\ref{rot21}) or (\ref{rot22}).
This is the difference between low-dimensional tensors (matrices) and high-dimensional tensors.
\end{remark}

\begin{coro} \label{k-color}
Let $\A$ be an $m$-th order $n$-dimensional symmetric weakly irreducible nonnegative tensor, which is spectral $\ell$-symmetric ($\ell \ge 2)$.
Then $\A$ is $(m,\ell)$-colorable.
\end{coro}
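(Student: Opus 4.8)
The plan is to extract the coloring directly from the structure of the group $\Dg$, in the same way as in the proof of Corollary~\ref{coloring}(2) but phrased for an abstract tensor rather than for a hypergraph. Since $\A$ is weakly irreducible it is nonzero, and since it is spectral $\ell$-symmetric the set $\Dg^{(1)}$ is nonempty by Theorems~\ref{PF2} and~\ref{PF3}; so I would fix a matrix $D \in \Dg^{(1)}$. The first small point to check is that $D \ne I$: if $D = I$, the defining relation $\A = e^{-\i \frac{2\pi}{\ell}} D^{-(m-1)} \A D$ collapses to $\A = e^{-\i \frac{2\pi}{\ell}} \A$, forcing $e^{-\i \frac{2\pi}{\ell}} = 1$ and hence $\ell = 1$, contradicting $\ell \ge 2$.

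Next, I would invoke Lemma~\ref{group}(3): because $\A$ is symmetric we have $\ell \mid m$ and $D^m = I$. Now apply Lemma~\ref{j-sigma} with $j = 1$ and $\sigma = m$ (this is legitimate since $D \ne I$ and $D^m = I$). This produces a partition $[n] = V_1 \cup \cdots \cup V_s$ with $s \ge 2$ and a map $\phi \colon [n] \to [m]$, constant on each $V_i$, such that for every nonzero entry $a_{i_1 \ldots i_m}$ the congruence~(\ref{j-sigma2}) holds, i.e.
\[
\frac{1}{\ell} \equiv \frac{\phi(i_1) + \cdots + \phi(i_m)}{m} \mod \mathbb{Z}.
\]

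Finally I would translate this into Definition~\ref{spe-ell-sym}. Multiplying the congruence through by $m$ gives $\phi(i_1) + \cdots + \phi(i_m) \equiv \frac{m}{\ell} \mod m$, and since $\ell \mid m$ the quantity $\frac{m}{\ell}$ is a genuine integer, so this is precisely condition~(\ref{gen-col}). Hence $\phi$ is an $(m,\ell)$-coloring of $\A$, as claimed. I do not anticipate any real obstacle here: all the substantive work has already been done in Lemma~\ref{group} and Lemma~\ref{j-sigma}, and the only things needing care are that $D$ is genuinely nontrivial (so that Lemma~\ref{j-sigma} applies) and that the values of $\phi$ lie in $\{1, \ldots, m\}$, which is automatic from the normalization $l_1 = \sigma = m$ in Lemma~\ref{j-sigma}.
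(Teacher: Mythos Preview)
Your proposal is correct and takes essentially the same approach as the paper. The paper's proof simply cites Corollary~\ref{stru2} and equation~(\ref{rot22}), but unwinding that reference amounts to exactly what you do: pick $D \in \Dg^{(1)}$, use Lemma~\ref{group}(3) to get $D^m = I$, and apply Lemma~\ref{j-sigma} with $j=1$, $\sigma=m$ to obtain the $(m,\ell)$-coloring; your explicit check that $D \ne I$ is a small clarification the paper leaves implicit.
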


\begin{proof}
By Corollary \ref{stru2} and (\ref{rot22}), we have a  map $\phi:[n] \to [m]$ such that (\ref{gen-col}) holds, implying that
$\A$ has  an $(m,\ell)$-coloring.  \end{proof}

\begin{lemma} \label{col-sym}
If an $m$-th order $n$-dimensional tensor $\A$ is $(m,\ell)$-colorable, then it is spectral $\ell$-symmetric.
\end{lemma}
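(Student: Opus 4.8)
The plan is to produce an explicit diagonal similarity between $\A$ and $e^{\i\frac{2\pi}{\ell}}\A$ out of a given $(m,\ell)$-coloring, and then invoke the fact, quoted in the preliminaries from \cite{Shao}, that diagonal similar tensors have equal spectra, together with the elementary fact that $\S(c\A)=c\,\S(\A)$ for any scalar $c$.

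First I would fix an $(m,\ell)$-coloring $\phi:[n]\to[m]$ of $\A$, set $\zeta=e^{\i\frac{2\pi}{m}}$, and define the diagonal matrix $D=\hbox{diag}(\zeta^{\phi(1)},\ldots,\zeta^{\phi(n)})$, which is invertible with $D^{-1}=\hbox{diag}(\zeta^{-\phi(1)},\ldots,\zeta^{-\phi(n)})$. Using the tensor product convention recalled just before Theorem \ref{PF2}, the $(i_1,\ldots,i_m)$-entry of $D^{-(m-1)}\A D$ equals $\zeta^{-(m-1)\phi(i_1)}\,a_{i_1\ldots i_m}\,\zeta^{\phi(i_2)+\cdots+\phi(i_m)}$. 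The heart of the argument is the exponent:
\[
-(m-1)\phi(i_1)+\phi(i_2)+\cdots+\phi(i_m)=-m\,\phi(i_1)+\big(\phi(i_1)+\cdots+\phi(i_m)\big)\equiv \phi(i_1)+\cdots+\phi(i_m)\pmod m .
\]
When $a_{i_1\ldots i_m}\ne 0$, the coloring condition (\ref{gen-col}) gives $\phi(i_1)+\cdots+\phi(i_m)\equiv \tfrac{m}{\ell}\pmod m$ (and $\tfrac{m}{\ell}\in\mathbb{Z}$ since $\ell\mid m$), so $\zeta$ raised to that exponent equals $\zeta^{m/\ell}=e^{\i\frac{2\pi}{\ell}}$; when $a_{i_1\ldots i_m}=0$ both sides vanish. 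Hence $D^{-(m-1)}\A D=e^{\i\frac{2\pi}{\ell}}\A$ entrywise.

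With $P=D^{-1}$ and $Q=D$ one has $P=Q^{-1}$, so $P^{m-1}\A Q=D^{-(m-1)}\A D=e^{\i\frac{2\pi}{\ell}}\A$ is diagonal similar to $\A$; therefore $\S(e^{\i\frac{2\pi}{\ell}}\A)=\S(\A)$. Finally I would record that $\S(c\A)=c\,\S(\A)$ as multisets for every scalar $c$: from $\A x^{m-1}=\lambda x^{[m-1]}$ one gets $(c\A)x^{m-1}=c\lambda\,x^{[m-1]}$ at the level of eigenpairs, and the equality of multiplicities follows from $\Tr_d(c\A)=c^d\,\Tr_d(\A)$ (immediate from (\ref{tracedF})) together with (\ref{traceF}) and Newton's identities. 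Taking $c=e^{\i\frac{2\pi}{\ell}}$ then gives $\S(\A)=\S(e^{\i\frac{2\pi}{\ell}}\A)=e^{\i\frac{2\pi}{\ell}}\S(\A)$, i.e. $\A$ is spectral $\ell$-symmetric.

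I do not expect a genuine obstacle here: the computation is routine. The only points demanding care are applying the product convention with the $-(m-1)$ power correctly attached to the primary index $i_1$ (so that the $-m\phi(i_1)$ term disappears modulo $m$), and justifying $\S(c\A)=c\,\S(\A)$ with multiplicities rather than only as sets, which I prefer to handle via the power-sum/trace identity rather than manipulating the resultant directly. Conceptually this lemma supplies the ``if'' direction complementing Corollary \ref{k-color}, so that for a symmetric weakly irreducible nonnegative tensor spectral $\ell$-symmetry and $(m,\ell)$-colorability are equivalent.
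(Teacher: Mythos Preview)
Your argument is correct and is essentially the paper's own proof: the same diagonal matrix $D=\hbox{diag}(e^{\i 2\pi\phi(1)/m},\ldots,e^{\i 2\pi\phi(n)/m})$ is used, and the verification that $D^{-(m-1)}\A D=e^{\i 2\pi/\ell}\A$ via (\ref{gen-col}) is identical. The only difference is that the paper stops immediately after that identity, whereas you spell out in more detail why it forces $\S(\A)=e^{\i 2\pi/\ell}\S(\A)$ (diagonal similarity plus $\S(c\A)=c\,\S(\A)$); this extra care is fine but not a different approach.
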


\begin{proof}
Suppose that $\A$ has  an $(m,\ell)$-coloring $\phi: [n] \to [m]$.
Let \[D=\hbox{diag}\{e^{\i \frac{2 \phi(1)\pi}{m}}, \ldots,e^{\i \frac{2 \phi(n)\pi}{m}}\}.\]
It is easy to verify that
$\A=e^{-\i \frac{2 \pi}{\ell}}D^{-(m-1)}\A D$ by (\ref{gen-col}).
So $\A$ is spectral $\ell$-symmetric. \end{proof}

The following two theorems follow by Corollary \ref{k-color} and Lemma \ref{col-sym} immediately, which generalize the
Nikiforov's results on spectral $2$-symmetry.

\begin{theorem}
Let $\A$ be a symmetric weakly irreducible nonnegative tensor of order $m$.
Then $\A$ is spectral $\ell$-symmetric if and only if $\A$ is $(m,\ell)$-colorable.
\end{theorem}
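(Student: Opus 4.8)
The plan is to prove the two implications separately; both follow from results already in place, so the main task is to assemble them correctly and to reconcile the ranges of $\ell$ for which the two notions are defined.

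For the implication ``spectral $\ell$-symmetric $\Rightarrow$ $(m,\ell)$-colorable'', I would first dispose of the degenerate case $\ell=1$: every tensor is spectral $1$-symmetric, and the constant map $\phi\equiv m$ satisfies $\phi(i_1)+\cdots+\phi(i_m)=m^2\equiv 0\equiv \frac{m}{1}\mod m$, so $\A$ is $(m,1)$-colorable. For $\ell\ge 2$, since $\A$ is symmetric and weakly irreducible, Lemma \ref{group}(3) gives $\ell\mid m$, so $(m,\ell)$-colorability is a meaningful notion, and Corollary \ref{k-color} applies verbatim: $\A$ is $(m,\ell)$-colorable. (Recall that Corollary \ref{k-color} is itself read off from the structural statement Corollary \ref{stru2}, via the coloring $\psi:[n]\to[m]$ appearing in (\ref{rot22}).)

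For the converse, suppose $\A$ is $(m,\ell)$-colorable, say with an $(m,\ell)$-coloring $\phi:[n]\to[m]$. Then Lemma \ref{col-sym} applies directly: setting $D=\hbox{diag}\{e^{\i \frac{2\phi(1)\pi}{m}},\ldots,e^{\i \frac{2\phi(n)\pi}{m}}\}$, one checks $\A=e^{-\i \frac{2\pi}{\ell}}D^{-(m-1)}\A D$ from (\ref{gen-col}), and hence $\S(\A)=e^{\i \frac{2\pi}{\ell}}\S(\A)$. This direction uses neither symmetry nor weak irreducibility.

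As for the main obstacle: there is essentially none in the theorem itself, since all the substance has been front-loaded into Corollary \ref{k-color}. That corollary rests on the group-theoretic analysis of $\Dg$ in Lemma \ref{group} --- in particular the Sylow and elementary-divisor argument producing a diagonal matrix $D\in\Dg$ with $D^m=I$ --- together with Lemma \ref{j-sigma}, which converts such a $D$ into a vertex coloring. So the genuinely hard step, extracting an $m$-coloring from the finite abelian group $\Dg/\Dg^{(0)}$, is already done; here the only points needing care are the degenerate case $\ell=1$ and the observation that $\ell\mid m$ holds automatically in the symmetric weakly irreducible setting, so that both sides of the equivalence speak about the same range of $\ell$.
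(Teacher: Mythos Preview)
Your proposal is correct and follows essentially the same approach as the paper: the paper's proof consists of a single sentence invoking Corollary \ref{k-color} for the forward direction and Lemma \ref{col-sym} for the converse, exactly as you do. Your additional care about the degenerate case $\ell=1$ and the observation that $\ell\mid m$ holds automatically are reasonable refinements that the paper leaves implicit (indeed, Definition \ref{spe-ell-sym} only defines $(m,\ell)$-colorability for $\ell\ge 2$, so the paper tacitly assumes $\ell\ge 2$ throughout).
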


\begin{theorem} \label{sym-col-graph}
Let $G$ be a connected $m$-uniform hypergraph.
Then $G$ is spectral $\ell$-symmetric if and only if $G$ is $(m,\ell)$-colorable.
\end{theorem}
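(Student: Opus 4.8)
The plan is to reduce everything to the adjacency tensor $\A(G)$ and then invoke the machinery already assembled. By construction $\A(G)$ is a symmetric nonnegative tensor, and (as recalled in Section~2) it is weakly irreducible precisely because $G$ is connected. Hence both Corollary~\ref{k-color} and Lemma~\ref{col-sym} apply verbatim to $\A=\A(G)$, and the only thing to check is that the two notions involved translate faithfully between $G$ and $\A(G)$: a nonzero entry $a_{i_1\ldots i_m}$ of $\A(G)$ corresponds exactly to an edge $\{v_{i_1},\ldots,v_{i_m}\}\in E(G)$, so that an $(m,\ell)$-coloring of the tensor in the sense of Definition~\ref{spe-ell-sym} is the same datum as an $(m,\ell)$-coloring of the hypergraph, and ``$G$ spectral $\ell$-symmetric'' means by convention ``$\A(G)$ spectral $\ell$-symmetric''.

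For the forward implication I would assume $G$ is spectral $\ell$-symmetric, so $\A(G)$ is. The case $\ell=1$ is vacuous, so assume $\ell\ge 2$. Since $\A(G)$ is symmetric, Lemma~\ref{group}(3) gives $\ell\mid m$, which is the standing divisibility hypothesis built into Definition~\ref{spe-ell-sym}; then Corollary~\ref{k-color} produces a map $\phi\colon[n]\to[m]$ with $\phi(i_1)+\cdots+\phi(i_m)\equiv \frac{m}{\ell}\pmod m$ whenever $a_{i_1\ldots i_m}\ne 0$, i.e. (\ref{gen-col}) holds along every edge of $G$. Thus $\phi$ is an $(m,\ell)$-coloring of $G$.

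For the converse I would assume $G$ is $(m,\ell)$-colorable; then so is $\A(G)$, and Lemma~\ref{col-sym} yields $\S(\A(G))=e^{\i\frac{2\pi}{\ell}}\S(\A(G))$, which is exactly the statement that $G$ is spectral $\ell$-symmetric. This already gives both directions, so in the write-up I would simply say the theorem follows from Corollary~\ref{k-color} and Lemma~\ref{col-sym} applied to $\A(G)$, as the companion tensor theorem does.

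I do not anticipate a genuine obstacle here: all the substance lives in Corollary~\ref{k-color} (which in turn rests on Corollary~\ref{stru2}, Lemma~\ref{j-sigma} and Lemma~\ref{group}) and in the short Lemma~\ref{col-sym}. The only points deserving a word of care are the degenerate value $\ell=1$ and the reminder that $\ell\mid m$ is automatic for symmetric tensors via Lemma~\ref{group}(3), so that invoking $(m,\ell)$-colorability is legitimate.
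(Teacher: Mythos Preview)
Your proposal is correct and follows exactly the paper's approach: the paper states that the theorem follows immediately from Corollary~\ref{k-color} and Lemma~\ref{col-sym}, which is precisely what you do after recording that $\A(G)$ is symmetric, nonnegative, and weakly irreducible. Your added remarks on the trivial case $\ell=1$ and on why $\ell\mid m$ is automatic are sensible housekeeping but not needed for the core argument.
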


\begin{example} \label{exa1}
Let $\A=(a_{ijk})$ be a tensor of order $3$ and dimension $6$ (cf. \cite{Ni}), where $i,j,k \in [6]$, such that
$ a_{123}=a_{234}=a_{345}=a_{456}=a_{561}=a_{612}=1,$
and all other entries are zero.
The eigen-equations of $\A$ are
\begin{equation}\label{eqn} \la x_1^2=x_2x_3, \la x_2^2=x_3x_4, \la x_3^2=x_4x_5,\la x_5^2=x_6x_1,\la x_6^2=x_1x_2.\end{equation}
If $\la \ne 0$, then $\la^6=1$, implying $\A$ is spectral $6$-cyclic (i.e. $c(\A)=6$).
If taking $x_1=1$ and $x_2$ as a parameter, then
by the first four equations of (\ref{eqn}) we get
\begin{equation}\label{eigenvector} x_1=1, x_2=x_2, x_3=\la x_2^{-1}, x_4=x_2^3, x_5=\la^3 x_2^{-5},x_6=\la^4 x_2^{11}.\end{equation}
From the 5th or 6th equation, we get $x_2^{21}=\la^3$.
So, if letting $\la=1, \tau=e^{\i 2\pi / 21}$ in (\ref{eigenvector}),
   then get $21$ different eigenvectors $x^{(0j)}$ ($j \in [21]$) corresponding $\rho(\A)$ listed in Table \ref{tab1}, implying $s(\A)=21$.

Let $\omega=e^{\i 2\pi / 6}$ and $\xi=e^{\i \pi/21}$.
Then by a similar discussion, for each $i \in [5]$, we  get $21$ different eigenvectors $x^{(ij)}$ ($j \in [21]$) corresponding to the eigenvalue $\omega^i$ listed in Table 3.1.

\begin{table}[ht]
\caption{Eigenvalue and eigenvectors of the tensor in Example \ref{exa1}}\label{tab1}
\renewcommand\arraystretch{1.5}
\noindent\[
\begin{array}{|c|l|}
\hline
\mbox{Eigenvalues} & \mbox{Eigenvectors}\\
\hline 1 & x^{(0j)}=(1,\tau^j,(\tau^j)^{-1},(\tau^j)^3,(\tau^j)^{-5},(\tau^j)^{11}) \\
\hline \omega & x^{(1j)}=(1,\tau^j \xi, \omega (\tau^j \xi)^{-1},(\tau^j \xi)^3, \omega^3 (\tau^j \xi)^{-5}, \omega^4 (\tau^j \xi)^{11})\\
\hline
 \omega^2 & x^{(2j)}=(1,\tau^j , \omega^2 (\tau^j)^{-1},(\tau^j )^3, (\tau^j )^{-5}, \omega^{2} (\tau^j )^{11}) \\
\hline
\omega^3 & x^{(3j)}=(1,\tau^j \xi, \omega^3 (\tau^j \xi)^{-1},(\tau^j \xi)^3, \omega^3 (\tau^j \xi)^{-5},  (\tau^j \xi)^{11}) \\
\hline
\omega^4 & x^{(4j)}=(1,\tau^j, \omega^4 (\tau^j )^{-1},(\tau^j )^3,  (\tau^j)^{-5}, \omega^4 (\tau^j )^{11}) \\
\hline
\omega^5 & x^{(5j)}=(1,\tau^j \xi, \omega^5 (\tau^j \xi)^{-1},(\tau^j \xi)^3, \omega^3 (\tau^j \xi)^{-5}, \omega^2 (\tau^j \xi)^{11})  \\
\hline
\end{array}
\]
\end{table}

For each $i=0,1,\ldots,5$ and each $j \in [21]$, we associated the eigenvector $x^{(ij)}$ with a diagonal matrix $D^{(ij)}=\hbox{diag}(x^{(ij)})=\hbox{diag}(x^{(ij)}_1,\ldots,x^{(ij)}_6)$, and form a set
$\Dg^{(i)}=\{D^{(i1)},\ldots,D^{(i,21)}\}$.
By Lemma \ref{group}, $\Dg=\cup_{i=0}^5 \Dg^{(i)}$ is a group of order $126$, and $\Dg^{(0)}$ is a subgroup of order $21$.
Each set $ \Dg^{(i)}$ ($i \in [5]$) is a coset of $\Dg^{(0)}$.
For example, one can verify $D^{(1,j+1)}=D^{(0j)} D^{(11)}$ for $j \in [21]$, where the superscript is taken modulo $21$, so that $\Dg^{(1)}=\Dg^{(0)}D^{(11)}$.

Writing \[
D^{(01)}=\hbox{diag}\left(1=e^{\i \frac{2 \pi {\bf 21}}{21}},e^{\i \frac{2 \pi {\bf 1}}{21}},e^{\i \frac{2 \pi {\bf 20}}{21}},
e^{\i \frac{2 \pi {\bf 3}}{21}},e^{\i \frac{2 \pi {\bf 16}}{21}},e^{\i \frac{2 \pi {\bf 11}}{21}}\right).\]
Then we have a partition $[n]=V_1 \cup \cdots \cup V_6$, where $V_i=\{i\}$ for $i \in [6]$; and a map $\phi_1: [n] \to [21]$, such that
\[\phi_1(1)=21, \phi_1(2)=1,\phi_1(3)=20,\phi_1(4)=3,\phi_1(5)=16,\phi_1(6)=11.\]
One can verify that (\ref{center1}) of Corollary \ref{stru1} holds.
For example,  as the solutions of the equation
\[
3 \phi_1(1)  \equiv  (\phi_1(1)+\phi_1(j)+\phi_1(k)) \mod 21
\]
are $(j,k)=(2,3)$ and  $(j,k)=(3,2)$,
except $a_{123}$ and $a_{132}$, all other entries $a_{1jk}=0$ for $j,k \in [6]$.

As $2 \mid 6$ and $2 \nmid 21$, by Lemma \ref{group}(2), there exists a $D \in \Dg\backslash \Dg^{(0)}$ such that $D^2=I$.
Such $D$ is contained in $\Dg^{(3)}$, i.e.
\[ D=\hbox{diag}\{x^{(3,10)}\}=\hbox{diag}\{1,-1,1,-1,1,-1\}.    \]
\end{example}

\begin{example}\label{exa2}
Next we consider a symmetrization form of the tensor $\A$ in Example \ref{exa1}.
Let $G$ be a $3$-uniform hypergraph with vertex set $[6]$ and edge set
$$\{ \{1,2,3\},\{2,3,4\},\{3,4,5\},\{4,5,6\},\{5,6,1\},\{6,1,2\}\}.$$
Let $\A=A(G))$ be the adjacency tensor of $G$.
Since $G$ is $3$-regular, $\rho(G)=3$, with the all-one vector as an eigenvector.
By Lemma \ref{ev}, for any eigenvector $x$ corresponding to an eigenvalue with modulus $\rho(G)$, $|x|=x^{(01)}$ if normalizing $x_1=1$;
and by Corollary \ref{m-ev}(4), $x^{[3]}=x^{(01)}$.
Let $\omega=e^{\i \frac{2 \pi}{3}}$.
By the eigenvector equations of $\A(G)$,
the eigenvalues with modulus $\rho(G)$ are $\la_k=3 \omega^k \;(k=0,1,2)$, which are corresponding to the eigenvectors $x^{(kj)}\; (j \in [3])$ listed as in Table \ref{tab2},
that is, $s(G)=3$ and $c(G) =3$.

\begin{table}[ht]
\caption{Eigenvalue and eigenvectors of the tensor in Example \ref{exa2}}\label{tab2}
\renewcommand\arraystretch{1.5}
\noindent\[
\begin{array}{|c|l|}
\hline
\mbox{Eigenvalues} & \mbox{Eigenvectors}\\
\hline 3 & x^{(0j)}=(1,\omega^j,\omega^{-j},1,\omega^j,\omega^{-j}) \\
\hline 3\omega & x^{(1j)}=(1,\omega^j,\omega^{1-j},1,\omega^j,\omega^{1-j})\\
\hline
 3\omega^2 & x^{(2j)}=(1,\omega^j,\omega^{2-j},1,\omega^j,\omega^{2-j}) \\
\hline
\end{array}
\]
\end{table}

For each $i=0,1,2$ and each $j \in [3]$, define $D^{(ij)}=\hbox{diag}(x^{(ij)})$, and form a set
$\Dg^{(i)}=\{D^{(i1)},D^{(i2)},D^{(i,3)}\}$.
By Lemma \ref{group}, $\Dg=\cup_{i=0}^2 \Dg^{(i)}$ is a group of order $9$, and $\Dg^{(0)}$ is a subgroup of order $3$.
It is easy to verify that each set $ \Dg^{(i)}$ ($i \in [2]$) is a coset of $\Dg^{(0)}$.
Also, we find that
$$ \Dg \cong \mathbb{Z}_3 \oplus \mathbb{Z}_3, \Dg^{(0)} \cong \mathbb{Z}_3.$$
So each elementary divisor of $\Dg$ and $\Dg^{(0)}$ divides $m$ (here $m=3$).

From the eigenvector
\[ x^{(01)}=(e^{\i \frac{2 \pi \bf{3}}{3}},e^{\i \frac{2 \pi \bf{1}}{3}},e^{\i \frac{2 \pi \bf{2}}{3}},
 e^{\i \frac{2 \pi \bf{3}}{3}},e^{\i \frac{2 \pi \bf{1}}{3}},e^{\i \frac{2 \pi \bf{2}}{3}})
 ,\]
we have a map $\phi_1: [6] \to [3]$ such that $\phi_1|_{\{1,4\}}=3$, $\phi_1|_{\{2,5\}}=1$ and $\phi_1|_{\{3,6\}}=1$ holding (\ref{center2}).
Let $V_1=\{1,4\}$, $V_2=\{2,5\}$ and $V_3=\{3,6\}$.
Observe that the equation
\[0 \equiv (\phi_1(V_i)+\phi_1(V_j)+\phi_1(V_k) \mod 3,
\]
has solutions $(i,j,k)$ with $\{i,j,k\}=\{1,2,3\}$.
So, we have a partition $V(G)=V_1 \cup V_2 \cup V_3$ such that each edge of $G$ intersects those three parts.
Equivalently, if one of $i,j,k$ occurs more than one time, then
\[\label{distrb1}
\A(G)[V_i|V_j|V_k]=0,
\]
which implies that $G$ contains no edges
$\{1,2,4\}$,$\{1,2,5\}$, $\{1,3,4\}$, $\{1,3,6\}$, $\{1,4,5\}$, $\{1,4,6\}$,
$\{2,3,5\}$, $\{2,3,6\}$, $\{2,4,5\}$, $\{2,5,6\}$, $\{3,4,6\}$, $\{3,5,6\}$.

From the eigenvector
\[x^{(11)}=(e^{\i \frac{2 \pi \bf{3}}{3}},e^{\i \frac{2 \pi \bf{1}}{3}},
 e^{\i \frac{2 \pi \bf{3}}{3}},e^{\i \frac{2 \pi \bf{3}}{3}},e^{\i \frac{2 \pi \bf{1}}{3}},e^{\i \frac{2 \pi \bf{3}}{3}}),\]
we have a map $\phi_2: [6] \to [3]$ such that $\phi_2|_{\{1,3,4,6\}}=3$ and $\phi_2|_{\{2,5\}}=1$ holding (\ref{rot22}).
Let $U_1=\{1,3,4,6\}$ and $U_2=\{2,5\}$.
Then every edge of $G$ takes two vertices from $U_1$ and one from $U_2$.
So we get the information of edge distribution of $G$, in particular we find almost all non-edges of $G$ in this example.
\end{example}

\section{Cyclic index of tensors and hypergraphs}
In this section, we will discuss how to characterize the spectral symmetry or the cyclic index of weakly irreducible nonnegative tensors or connected hypergraphs.
Shao et.al give a characterization of spectral $m$-symmetric $m$-uniform hypergraphs by using the generalized traces; see \cite[Theorem 3.1]{SQH}.
Following their idea, we get a generalized result.

\begin{theorem} \label{ellsym}
Let $\A$ be a tensor of order $m$ and dimension $n$, and $\varphi_\A(\la)=\sum_{i=0}^D a_i \la^{D-i}\;(D=n(m-1)^{n-1})$ be the characteristic polynomial of $\A$.
Then the following conditions are equivalent.

\begin{enumerate}

\item  $\A$ is spectral $\ell$-symmetric.

\item If $\ell \nmid d$, then $a_d=0$, i.e. $\varphi_\A(\la)=\la^t f(\la^\ell)$ for some nonnegative integer $t$ and some polynomial $f$.

\item If $\ell \nmid d$, then $\Tr_d(\A)=0$.

\end{enumerate}
\end{theorem}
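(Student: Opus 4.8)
The plan is to prove the cycle of implications $(1)\Rightarrow(3)\Rightarrow(2)\Rightarrow(1)$, which gives all three equivalences at once, while noting along the way that the two formulations inside $(2)$ differ only by a regrouping of terms. Throughout I would write $\omega=e^{\i 2\pi/\ell}$ and let $\la_1,\dots,\la_D$ be the eigenvalues of $\A$ listed with algebraic multiplicity, so that $\varphi_\A(\la)=\prod_{i=1}^D(\la-\la_i)$ and, by (\ref{traceF}), $\Tr_d(\A)=\sum_{i=1}^D\la_i^d$. For $(1)\Rightarrow(3)$: if $\A$ is spectral $\ell$-symmetric then the multiset $\{\la_1,\dots,\la_D\}$ is invariant under multiplication by $\omega$, so $\Tr_d(\A)=\sum_i\la_i^d=\sum_i(\omega\la_i)^d=\omega^d\,\Tr_d(\A)$ for every $d$; when $\ell\nmid d$ we have $\omega^d\neq1$, hence $\Tr_d(\A)=0$.

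For $(3)\Rightarrow(2)$ I would substitute the hypothesis into the Schur-polynomial formula (\ref{coef})--(\ref{schur}) for the characteristic coefficients, namely $a_d=\sum_{k=1}^d\sum_{d_1+\cdots+d_k=d}\frac{1}{k!}\prod_{j=1}^k\left(-\frac{\Tr_{d_j}(\A)}{d_j}\right)$, the inner sum over compositions with all $d_j\ge1$. Fix $d$ with $\ell\nmid d$; for any such composition, $\ell$ cannot divide every part $d_j$, for otherwise it would divide $\sum_j d_j=d$, so at least one factor $\Tr_{d_j}(\A)$ vanishes by $(3)$ and the whole monomial is zero; summing, $a_d=0$. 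To recast this as $\varphi_\A(\la)=\la^t f(\la^\ell)$, write $D=\ell q+t$ with $0\le t<\ell$: the surviving terms of $\sum_d a_d\la^{D-d}$ are those with $d=\ell d'$, where $0\le d'\le q$, and $\la^{D-\ell d'}=\la^t(\la^\ell)^{q-d'}$, so $f(u):=\sum_{d'=0}^q a_{\ell d'}u^{q-d'}$ does the job; conversely, that factored form visibly forces $a_d=0$ whenever $\ell\nmid d$.

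For $(2)\Rightarrow(1)$, assume $a_d=0$ for every $d$ with $\ell\nmid d$ and compute $\varphi_\A(\omega\la)=\sum_{d=0}^D a_d\,\omega^{D-d}\la^{D-d}$: only the terms with $\ell\mid d$ survive, and for those $\omega^{D-d}=\omega^D$, so $\varphi_\A(\omega\la)=\omega^D\varphi_\A(\la)$. Dividing by $\omega^D$ makes both sides monic of degree $D$, and the root multiset of $\omega^{-D}\varphi_\A(\omega\la)=\prod_i(\la-\omega^{-1}\la_i)$ is $\omega^{-1}\S(\A)$; equality of the two monic polynomials therefore gives $\omega^{-1}\S(\A)=\S(\A)$, i.e. $\S(\A)=\omega\,\S(\A)$, which is $(1)$.

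The argument is, in essence, a dictionary between spectral $\ell$-symmetry, the functional equation $\varphi_\A(\omega\la)=\omega^D\varphi_\A(\la)$, vanishing of the \emph{off-residue} coefficients $a_d$, and vanishing of the off-residue traces $\Tr_d(\A)$, so I do not expect a serious obstacle. The only place that uses a genuine (if elementary) combinatorial fact is $(3)\Rightarrow(2)$: that $\ell\nmid d$ forces $\ell\nmid d_j$ for at least one part of every composition $d_1+\cdots+d_k=d$. The single spot that merits a little care is the bookkeeping of the residue $t$ of $D$ modulo $\ell$ when passing from the vanishing of the off-residue $a_d$ to the explicit factorization $\varphi_\A(\la)=\la^t f(\la^\ell)$.
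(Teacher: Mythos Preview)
Your proof is correct and follows a slightly different, more economical route than the paper. The paper proves $(1)\Leftrightarrow(2)$ via the functional equation $\varphi_\A(\epsilon\la)=\epsilon^D\varphi_\A(\la)$ (exactly your $(2)\Rightarrow(1)$ run in both directions), and its $(3)\Rightarrow(2)$ coincides with yours through the Schur-polynomial formula. The genuine difference is in how $(3)$ is reached: the paper proves $(2)\Rightarrow(3)$ by factoring $\varphi_\A(\la)=\la^t\prod_j(\la^\ell-c_j^\ell)$, building an auxiliary block matrix $B=c_1P\oplus\cdots\oplus c_sP$ with $P$ the $\ell\times\ell$ cyclic permutation matrix, and then identifying $\Tr_d(\A)$ with the ordinary matrix trace $\Tr(B^d)$, which vanishes when $\ell\nmid d$. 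You bypass that construction entirely by going $(1)\Rightarrow(3)$ directly from the power-sum identity (\ref{traceF}), using only that the eigenvalue multiset is $\omega$-invariant. Your cycle is shorter and uses less machinery; the paper's argument, on the other hand, exhibits the mildly sharper fact that condition $(2)$ alone already yields the trace vanishing through an explicit matrix model, without first detouring through $(1)$.
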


\begin{proof}
$(1) \Longrightarrow (2)$. Let $\epsilon=e^{\i \frac{2\pi}{\ell}}$.
Then (1) implies that $\psi_\A(\epsilon\la)=\epsilon^D\varphi_\A(\la)$.
So \begin{equation} \label{trans} \sum_{d=0}^D a_d \epsilon^{D-d}\la^{D-d}=\epsilon^D \sum_{d=0}^D a_d \la^{D-d}.\end{equation}
Then we have $a_d(\epsilon^d-1)=0$.
So, if $\ell \nmid d$, then $\epsilon^d-1\neq 0$, and hence $a_d=0$.

It is easily seen that $(2) \Longrightarrow (1)$.

$(2) \Longrightarrow (3)$. From (2) we have for some integer $s$,
\[\varphi_\A(\la)=\la^t(\la^\ell-c_1^\ell)\cdots (\la^\ell-c_s^\ell).\]
Let $P$ be a circulant permutation matrix of dimension $\ell$, that is $p_{ij}=1$ if and only if $j\equiv i+1 \mod \ell$.
Then $\psi_{cP}(\la)=\la^\ell - c^\ell$.
If letting $B=c_1P \oplus \cdots \oplus c_sP$, then
$\varphi_\A(\la)=\la^t \psi_B(\la)$, and by (\ref{traceF})
\[ \Tr_d(\A)=\Tr(B^d)=\Tr((c_1P)^d)+\cdots+\Tr((c_sP)^d).\]
It is known that if $\ell \nmid  d$, then $\Tr((c_iP)^d)=0$ for $i \in [s]$, and hence $\Tr_d(\A)=0$.

$(3) \Longrightarrow (2)$. From (\ref{coef}) and (\ref{schur}),
if $a_d \neq 0$, then there exist some positive integers $d_1,\ldots,d_t$ with $d_1+\cdots+d_t=d$ such that
\begin{equation}\label{sum-trace}\Tr_{d_1}(\A)\cdots \Tr_{d_t}(\A) \neq 0.\end{equation}
By the condition (3), we know that $\ell \mid  d_i$ for $i \in [t]$, yielding $\ell \mid d$.
\end{proof}

\begin{coro}\label{div}
Let $\A$ be a tensor of order $m$ and dimension $n$, and $\varphi_\A(\la)=\sum_{i=0}^D a_i \la^{D-i}\;(D=n(m-1)^{n-1})$ be the characteristic polynomial of $\A$.
If $\A$ is spectral $\ell$-symmetric, then
\[ \ell \mid  \hbox{g.c.d.}\{d: a_d \ne 0\}, \; \ell \mid  \hbox{g.c.d.}\{d: \Tr_d(\A) \ne 0\}.\]
Furthermore,
\begin{equation} \label{gcd2} c(\A)=\hbox{g.c.d.} \{d: a_d \ne 0\}=\hbox{g.c.d.}\{d: \Tr_d(\A) \ne 0\}.\end{equation}
\end{coro}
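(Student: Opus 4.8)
The plan is to deduce everything from Theorem \ref{ellsym} together with the divisibility lemma for the cyclic index (the unnamed Lemma at the end of Section~2). First I would establish the two divisibility statements. By Theorem \ref{ellsym}, if $\A$ is spectral $\ell$-symmetric, then $a_d = 0$ whenever $\ell \nmid d$; equivalently, $a_d \ne 0$ forces $\ell \mid d$. Hence $\ell$ is a common divisor of the set $\{d : a_d \ne 0\}$, so $\ell \mid \hbox{g.c.d.}\{d : a_d \ne 0\}$. The identical argument applied to condition (3) of Theorem \ref{ellsym} gives $\ell \mid \hbox{g.c.d.}\{d : \Tr_d(\A) \ne 0\}$.

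Next I would prove the characterization \eqref{gcd2} of $c(\A)$. Write $g_a := \hbox{g.c.d.}\{d : a_d \ne 0\}$ and $g_t := \hbox{g.c.d.}\{d : \Tr_d(\A) \ne 0\}$. Applying the first part with $\ell = c(\A)$ (recall $\A$ is spectral $c(\A)$-symmetric by definition of the cyclic index) yields $c(\A) \mid g_a$ and $c(\A) \mid g_t$. For the reverse divisibilities, I would argue that $\A$ is spectral $g_a$-symmetric and spectral $g_t$-symmetric, using the converse directions of Theorem \ref{ellsym}: since every $d$ with $a_d \ne 0$ is a multiple of $g_a$, condition (2) of Theorem \ref{ellsym} holds with $\ell = g_a$, so $\A$ is spectral $g_a$-symmetric, whence $g_a \mid c(\A)$ by maximality of the cyclic index. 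Likewise, if $d$ with $\Tr_d(\A) \ne 0$ is always a multiple of $g_t$, then condition (3) holds with $\ell = g_t$, so $\A$ is spectral $g_t$-symmetric and $g_t \mid c(\A)$. Combining the two directions gives $c(\A) = g_a$ and $c(\A) = g_t$, which is \eqref{gcd2}.

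One technical point to handle carefully is the degenerate case where the relevant index set is empty or contains $0$ only — e.g. when $\varphi_\A(\la) = \la^D$, so that $a_d = 0$ for all $d \ge 1$ and $\Tr_d(\A) = 0$ for all $d \ge 1$. In that situation the g.c.d. over the empty set should be interpreted as $0$ (or the statement restricted to $d \ge 1$), and indeed $\A$ is then spectral $\ell$-symmetric for every $\ell$, so $c(\A)$ is not finite in the usual sense; I would note that the convention matches the one implicit in Theorem \ref{ellsym}(2) (where $t = D$ and $f \equiv 1$). Apart from this bookkeeping, the argument is a direct application of the equivalences already proved, so I do not expect any real obstacle: the main content is entirely carried by Theorem \ref{ellsym}, and Corollary \ref{div} is essentially just its restatement in terms of greatest common divisors.
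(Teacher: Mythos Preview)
Your proposal is correct and follows essentially the same approach as the paper: both arguments deduce everything from the equivalences in Theorem~\ref{ellsym}, first obtaining $c(\A)\mid g_a$ and $c(\A)\mid g_t$ from the forward direction, then spectral $g_a$- and $g_t$-symmetry from the converse directions to force equality. The only cosmetic difference is that the paper first proves $c(\A)=g_a$ and then shows $g_t=g_a$ directly via the Schur-polynomial relation~(\ref{sum-trace}), whereas you treat $g_a$ and $g_t$ in parallel by invoking conditions~(2) and~(3) of Theorem~\ref{ellsym} separately.
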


\begin{proof}
The first result is obtained by the equivalence of (1) and (2) in Theorem \ref{ellsym}.
So, $ c(\A)\mid \hbox{g.c.d.} \{d: a_d \ne 0\}$.
Let $g:=\hbox{g.c.d.} \{d: a_d \ne 0\}$ and $\epsilon=e^{\i \frac{2\pi}{g}}$.
Then for all $d$ with $a_d\ne 0$, $g\mid d$ and  $a_d({e^{\i\frac{2\pi d}{g}}}-1)=0$.
So (\ref{trans}) holds and hence
$\psi_\A(\epsilon\la)=\epsilon^D\varphi_\A(\la)$,
which implies that $\A$ is spectral $g$-symmetric.
By the definition of $c(\A)$, $c(\A)=g=\hbox{g.c.d.} \{d: a_d \ne 0\}$.

Now let $\bar{g}:=\hbox{g.c.d.}\{d: \Tr_d(\A) \ne 0\}$.
From (\ref{sum-trace}), if $a_d \ne 0$, then  there exist some positive integer $d_1,\ldots,d_t$ with $d_1+\cdots+d_t=d$ such that $\Tr_{d_1}(\A)\cdots \Tr_{d_t}(\A) \neq 0$.
Then, $\bar{g} \mid  d_i$ for $i \in [t]$, and hence $\bar{g}\mid d$, which implies that $\bar{g}\mid g$.
On the other hand, by what we have proved, $g\mid \bar{g}$, yielding (\ref{gcd2}).
\end{proof}

In the remaining part of this paper, we will discuss the spectral symmetry of connected hypergraphs.
Cooper and Dutle \cite{CD} raised a problem on characterizing the $m$-uniform hypergraphs whose spectra are invariant under multiplication by the $m$-th roots
of unity (i.e. the spectral $m$-symmetric $m$-uniform hypergraphs by our definition).
Pearson and Zhang posed a more specific problem on characterizing all connected uniform hypergraphs with symmetric spectrum (i.e. the spectral $2$-symmetric $m$-uniform connected hypergraphs).
Nikiforov \cite{Ni} obtains a complete solution to the latter problem, which is exactly the result of Theorem \ref{sym-col-graph} for $\ell=2$.

Shao et.al \cite{SSW} give a characterization on the symmetry of $H$-spectra of hypergraphs, that is,
an $m$-uniform hypergraph $G$ has a symmetric $H$-spectrum if and only if $m$ is even and $G$ is odd-bipartite (or odd transversal).
They posed a problem that whether $\S(\L(G))=\S(\Q(G))$ can imply that $H\S(\L(G))=H\S(\Q(G))$, which is equivalently to ask
whether $\S(\A(G))=-\S(\A(G))$ can imply that $H\S(\A(G))=-H\S(\A(G))$.
Zhou et.al also pose a similar conjecture whether $-\rho(\A(G))$ being an eigenvalue of $G$ can imply that $m$ is even and $G$ is odd-bipartite.
In fact,  $-\rho(\A(G))$ is an eigenvalue of $G$ if and only if $G$ has a symmetric spectrum.
By Nikiforov's result  \cite{Ni}, this is equivalent to ask whether an odd-colorable hypergraph is odd transversal.
They construct two classes of hypergraphs to give a negative answer.
We also give a negative answer to the above problem in \cite{FKT} by constructing a class of non-odd-bipartite generalized power hypergraphs.

In general, for an $m$-uniform hypergraph $G$, as $\A(G)$ is symmetric, if $G$ is spectral $\ell$-symmetric, then $\ell\mid m$ by Lemma \ref{group}.
If $G$ is $m$-partite \cite{CD}, or hm-bipartite\cite{HQ}, or $p$-hm bipartite with $(p,m)=1$ \cite{SQH}, then $G$ is spectral $m$-symmetric.
The following result is proved in Lemma \ref{group}(3), which is re-proved by using the generalized traces as follows.

\begin{coro}\label{ell-m}
Let $G$ be an $m$-uniform hypergraph on $n$ vertices.
If $G$ is spectral $\ell$-symmetric, then $\ell \mid m$.
\end{coro}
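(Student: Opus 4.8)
The plan is to obtain this corollary from the trace criterion of Theorem~\ref{ellsym} rather than from the group $\Dg$. By the equivalence of conditions (1) and (3) there, if the adjacency tensor $\A(G)$ is spectral $\ell$-symmetric, then $\Tr_d(\A(G))=0$ for every $d$ with $\ell\nmid d$. Hence it is enough to prove that $\Tr_m(\A(G))\neq 0$ (assuming $E(G)\neq\emptyset$, as one must: for the empty hypergraph $\A(G)$ is the zero tensor, which is spectral $\ell$-symmetric for every $\ell$). Once $\Tr_m(\A(G))\neq 0$ is established, $\ell\nmid m$ becomes impossible, so $\ell\mid m$. Note that, unlike the proof of Lemma~\ref{group}(3), this route uses neither connectedness nor weak irreducibility of $G$.

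For the key point I would use the combinatorial expansion (\ref{tracedF}) of $\Tr_d(\A)$ with $d=m$. Since $\A(G)$ is nonnegative, every summand $(m-1)^{n-1}\frac{b(F)}{c(F)}\Pi_F(\A(G))\,|\W(F)|$ is nonnegative: $b(F)$ and $c(F)$ are products of factorials, hence positive; $\Pi_F(\A(G))\ge 0$ as a product of entries of a nonnegative tensor; and $|\W(F)|\ge 0$. Therefore it suffices to exhibit one $F\in\F_m$ with a strictly positive summand. Pick any edge $e=\{w_1,\dots,w_m\}\in E(G)$, relabel so that $w_1<\cdots<w_m$, and, in the notation of (\ref{F}), take $F=\{e(w_1),\dots,e(w_m)\}$, the multiset of $m$ copies of $e$ in which the $j$-th copy carries $w_j$ as primary index, with each $\alpha_j$ an arbitrary ordering of $\{w_1,\dots,w_m\}\setminus\{w_j\}$. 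Then $\Pi_F(\A(G))=\prod_{j=1}^m a_{w_j\alpha_j}=\big(\tfrac{1}{(m-1)!}\big)^m>0$, and the arc multiset $E(F)$ contains every ordered pair $(w_j,w_k)$ with $j\neq k$ exactly once; thus $D(F)$ is the complete symmetric digraph on $\{w_1,\dots,w_m\}$, which is connected and in which every vertex has in-degree equal to its out-degree (namely $m-1$). Hence $D(F)$ is Eulerian, so $|\W(F)|\ge 1$, the summand is strictly positive, and $\Tr_m(\A(G))>0$.

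I do not anticipate a genuine obstacle: the argument uses only the nonnegativity of the terms of (\ref{tracedF})---so that a single positive term forces $\Tr_m(\A(G))\neq 0$, with no cancellation to worry about---together with the elementary fact that a connected digraph with balanced in- and out-degrees has an Eulerian circuit. The sole point requiring attention is the degenerate case $E(G)=\emptyset$, which must be excluded from (or handled separately in) the statement; everything else is routine bookkeeping with the formula (\ref{tracedF}).
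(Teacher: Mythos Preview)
Your argument is correct and follows essentially the same strategy as the paper: show that something at degree $m$ is nonzero and then invoke Theorem~\ref{ellsym}. The only difference is which half of the equivalence you use. The paper appeals to condition~(2) via Corollary~\ref{div}, quoting Cooper--Dutle's explicit formula $a_m=-m^{m-2}(m-1)^{n-m}|E(G)|$ for the codegree-$m$ coefficient of $\varphi_{\A(G)}$; you instead use condition~(3) and prove $\Tr_m(\A(G))>0$ directly from the combinatorial expansion~(\ref{tracedF}) by exhibiting a single edge and observing that its associated digraph is the complete symmetric digraph on $m$ vertices, hence Eulerian. Your route is a bit longer but fully self-contained---it does not rely on the external computation of $a_m$---and your remark about the degenerate case $E(G)=\emptyset$ is on point: the paper's cited formula also gives $a_m=0$ there, so both proofs tacitly assume at least one edge.
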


\begin{proof}
By Theorem 3.15 in \cite{CD}, the codegree $m$ coefficient of $\psi_G(\la)$ is
\[ \label{coeff} a_m=-m^{m-2}(m-1)^{n-m}|E(G)| \ne 0.\]
So, by Corollary \ref{div}, $\ell \mid m$.\end{proof}

A {\it simplex} in an $m$-uniform hypergraph is a set of $m+1$ vertices where every set of $m$ vertices forms an edge.

\begin{coro}\label{simplex}
Let $G$ be an $m$-uniform hypergraph on $n$ vertices.
If $G$ contains a simplex, then $G$ is  spectral $1$-cyclic or nonsymmetric.
\end{coro}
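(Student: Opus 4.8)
The plan is to work with the generalized-trace description of the cyclic index supplied by Corollary~\ref{div}, namely $c(\A(G))=\hbox{g.c.d.}\{d:\Tr_d(\A(G))\neq 0\}$, and to exhibit two \emph{coprime} values of $d$ at which this trace does not vanish, namely $d=m$ and $d=m+1$. Since $\hbox{g.c.d.}(m,m+1)=1$, this forces $c(\A(G))=1$, i.e.\ $G$ is spectral $1$-cyclic (equivalently, spectral nonsymmetric), which is exactly the assertion. The structural feature that makes this clean is that $\A(G)$ is nonnegative, so in the combinatorial trace formula (\ref{tracedF}) every summand $\frac{b(F)}{c(F)}\Pi_F(\A)|\W(F)|$ is nonnegative; hence it suffices, for each of $d=m$ and $d=m+1$, to produce a single $F\in\F_d$ whose summand is strictly positive, and no cancellation can destroy it.

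The case $d=m$ is easy and uses only $E(G)\neq\emptyset$: pick any facet $e$ of the simplex (an edge of $G$) and let $F$ consist of $m$ copies of $e$, the $k$-th copy carrying the $k$-th vertex of $e$ as primary index. Then $D(F)$ is the complete digraph on the $m$ vertices of $e$, which is connected with every vertex balanced (in-degree $=$ out-degree $=m-1$), so $\W(F)\neq\emptyset$, while $\Pi_F(\A(G))=((m-1)!)^{-m}>0$; thus $\Tr_m(\A(G))>0$.

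The substance is the case $d=m+1$. Label the simplex vertices $u_0,\dots,u_m$, and let $e_j=\{u_0,\dots,u_m\}\setminus\{u_j\}$ be its facets, all of which are edges of $G$. I take $F=\{e_0,\dots,e_m\}$ and must assign to each facet $e_j$ a primary vertex $i_j\in e_j$. Balancing in- and out-degrees at each vertex of $D(F)$ forces every $u_k$ to be the primary vertex of \emph{exactly one} facet, so, since $i_j\neq u_j$, the map $j\mapsto i_j$ must be a derangement $\pi$ of $\{0,\dots,m\}$. Conversely, for any such $\pi$ one checks that in $D(F)$ the vertex $u_v$ emits an arc to every vertex except $u_v$ itself and $u_{\pi^{-1}(v)}$, and a short two-step argument then shows $D(F)$ is strongly connected as soon as $m+1\ge 3$, hence Eulerian, so $\W(F)\neq\emptyset$. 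Derangements of an $(m+1)$-set exist for all $m\ge 2$, so such an $F$ exists and $\Pi_F(\A(G))=((m-1)!)^{-(m+1)}>0$, giving $\Tr_{m+1}(\A(G))>0$. Combining with the previous paragraph, Corollary~\ref{div} yields $c(\A(G))\mid\hbox{g.c.d.}(m,m+1)=1$.

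The main obstacle — indeed the only step that is not routine — is the connectivity verification in the case $d=m+1$: the degree-balance condition is automatically met by the choice of primary vertices, so everything hinges on showing that the digraph $D(F)$ attached to a derangement of the simplex is strongly connected. The remaining ingredients (nonnegativity of the trace summands, existence of derangements, triviality of $\hbox{g.c.d.}(m,m+1)$, and the case $d=m$) are immediate. When $G$ is connected one can bypass traces entirely: if $G$ were spectral $\ell$-symmetric with $\ell\ge 2$, then by Theorem~\ref{sym-col-graph} it would admit an $(m,\ell)$-coloring $\phi$, and applying (\ref{gen-col}) to the facets $e_j$ gives $\phi(u_j)\equiv\phi(u_{j'})\pmod m$ for all $j,j'$, so all $u_j$ share a single color $c$; then the facet equation reads $mc\equiv\tfrac{m}{\ell}\pmod m$, i.e.\ $\tfrac{m}{\ell}\equiv 0\pmod m$, contradicting $0<\tfrac{m}{\ell}<m$ (here $\ell\mid m$ by Corollary~\ref{ell-m}).
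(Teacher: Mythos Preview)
Your argument is correct. Both you and the paper reduce to Corollary~\ref{div} and show that something is nonzero at degree $m$ and at degree $m+1$, so $c(\A(G))\mid\gcd(m,m+1)=1$. The difference is in how the nonvanishing is established. The paper simply quotes Cooper--Dutle \cite[Theorems~3.15, 3.17]{CD}, which compute the codegree-$m$ and codegree-$(m+1)$ coefficients $a_m,a_{m+1}$ of $\varphi_{\A(G)}$ explicitly (the latter being a nonzero multiple of the number of simplices), and then applies the coefficient version of (\ref{gcd2}). You instead work with the trace version of (\ref{gcd2}) and exhibit, by hand, a single $F\in\F_m$ and a single $F\in\F_{m+1}$ whose contribution to (\ref{tracedF}) is strictly positive; the key observation that all summands in (\ref{tracedF}) are nonnegative when $\A=\A(G)\ge 0$ rules out cancellation, so one positive term suffices.

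Your route is more self-contained (no external coefficient computation) and makes transparent \emph{why} a simplex is exactly what is needed for $d=m+1$: the derangement assignment of primary vertices to facets is forced by the balance condition, and your two-step connectivity check (noting that $\pi(b)=a$ when $b=\pi^{-1}(a)$, so any third vertex serves as intermediary) is clean. The paper's route is shorter but black-boxes the combinatorics into \cite{CD}. Your alternative argument for connected $G$ via Theorem~\ref{sym-col-graph} is a nice bonus and is not in the paper.
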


\begin{proof}
By Theorem 3.17 in \cite{CD}, the codegree $m+1$ coefficient of $\psi_G(\la)$ is
 \[ a_{m+1}=-C(m-1)^{n-m}s,\]
  where $s$ is the number of simplices in $G$,
  and $C$ is a positive integer depending only on $m$.
 By Corollary \ref{div}, $c(\A(G))=\hbox{g.c.d}\{m,m+1,\ldots\}=1$. \end{proof}

The following result in the case of $(p,m)=1$ is given by Shao et.al \cite{SQH}.

\begin{theorem} \label{gener}
Let $G$ be an $m$-uniform $p$-hm bipartite hypergraph.
Then $G$ is spectral $\frac{m}{(p,m)}$-symmetric.
\end{theorem}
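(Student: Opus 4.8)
The plan is to exhibit an $(m,\ell)$-coloring of the adjacency tensor $\A(G)$ with $\ell=\frac{m}{(p,m)}$ and then invoke Lemma~\ref{col-sym}. First I would record that $(p,m)\mid m$, so $\ell\mid m$ and $\frac{m}{\ell}=(p,m)$, and dispose of the degenerate values $p=0$ and $p=m$ (one part of the bipartition empty): there $(p,m)=m$, hence $\ell=1$, and spectral $1$-symmetry holds vacuously by Definition~\ref{ell-sym}. So I may assume $1\le p\le m-1$, whence $(p,m)\le p\le m-1<m$, so $\ell\ge2$ and the notion of $(m,\ell)$-colorability from Definition~\ref{spe-ell-sym} applies.

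Next, writing $V(G)=V_1\cup V_2$ for the bipartition in which every edge meets $V_1$ in exactly $p$ vertices, I would construct $\phi:V(G)\to[m]$ with $\phi(i_1)+\cdots+\phi(i_m)\equiv(p,m)\mod m$ for every edge $\{v_{i_1},\dots,v_{i_m}\}$ of $G$, which is precisely (\ref{gen-col}) for this $\ell$. Since $(p,m)$ divides $(p,m)$, the congruence $px\equiv(p,m)\mod m$ is solvable; any solution has $x\not\equiv0\mod m$, for otherwise $(p,m)\equiv0\mod m$, impossible as $1\le(p,m)<m$. So I fix $x\in\{1,\dots,m-1\}$ and set $\phi(v)=x$ for $v\in V_1$ and $\phi(v)=m$ for $v\in V_2$, the color $m$ playing the role of $0$ modulo $m$. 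Then for any edge $e$ exactly $p$ of its $m$ distinct vertices lie in $V_1$ and $m-p$ in $V_2$, so $\sum_{v\in e}\phi(v)=px+(m-p)m\equiv px\equiv(p,m)\mod m$. Hence $\A(G)$ is $(m,\ell)$-colorable, and Lemma~\ref{col-sym} yields that $G$ is spectral $\frac{m}{(p,m)}$-symmetric.

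I do not expect any real obstacle here; the only mildly delicate points are the solvability of $px\equiv(p,m)\mod m$ and the bookkeeping for $p\in\{0,m\}$. Should one prefer to bypass the colorability machinery, I would instead use the diagonal matrix $D=\hbox{diag}\big(e^{\i\frac{2\pi\phi(v)}{m}}\big)_{v\in V(G)}$ directly: for each nonzero entry $a_{i_1\dots i_m}$ of $\A(G)$ the factor $d_{i_1i_1}^{-(m-1)}d_{i_2i_2}\cdots d_{i_mi_m}=d_{i_1i_1}^{-m}\prod_{j=1}^m d_{i_ji_j}$ equals $e^{\i\frac{2\pi(p,m)}{m}}=e^{\i\frac{2\pi}{\ell}}$, so $e^{-\i\frac{2\pi}{\ell}}D^{-(m-1)}\A(G)D=\A(G)$; diagonal similarity (\cite{Shao}) together with the scaling of eigenvalues under multiplication by a scalar then gives $\S(\A(G))=e^{\i\frac{2\pi}{\ell}}\S(\A(G))$, i.e.\ spectral $\frac{m}{(p,m)}$-symmetry.
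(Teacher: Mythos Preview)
Your proof is correct, but it takes a genuinely different route from the paper's. The paper argues via the trace characterization of Theorem~\ref{ellsym}: if $\Tr_d(G)\ne 0$ then some $F\in\F_d(G)$ exists whose induced sub-hypergraph $H$ has every vertex degree a multiple of $m$; summing degrees over $V_1$ gives $\sum_{v\in V_1}d_H(v)=dp$, hence $m\mid dp$ and $\frac{m}{(p,m)}\mid d$. Your argument instead produces an explicit $(m,\ell)$-coloring by solving $px\equiv(p,m)\bmod m$ and assigning $x$ to $V_1$, $m$ to $V_2$, then appeals to Lemma~\ref{col-sym}. Your approach is more elementary and constructive (it hands you the diagonal matrix $D$ directly), and it does not rely on the generalized-trace formula~(\ref{tracedF}) or on $m$-valence of nonzero trace contributions. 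The paper's proof, on the other hand, is placed in Section~4 precisely to illustrate the trace machinery, and its counting argument generalizes more readily to settings where one wants to read off divisibility of $c(\A)$ from combinatorial data without exhibiting a coloring.
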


\begin{proof}
By Lemma \ref{ellsym}, it suffices to prove that if $\Tr_d(G) \ne 0$, then $\frac{m}{(p,m)}\mid d$.
Suppose that $\Tr_d(G) \ne 0$.
By (\ref{tracedF}) and (\ref{F}), there exists $F=\{e_1(i_1),\ldots,e_d(i_d)\} \in \F_d(G)$.
Let $H$ be the sub-hypergraph of $G$ induced by those $d$ edges $e_1,\ldots,e_d$.
Then the degree $d_H(v)$ of each vertex $v$ of $H$ is $m$-valent.
Let $[n]=V_1 \cup V_2$ be a partition of the vertex set $V(G)$ such that each edge intersects $V_1$ with exactly $p$ vertices.
Now \[ \sum_{v \in V_1}d_H(v)=dp.\]
As each vertex of $H$ has an $m$-valent degree, $m \mid  dp$, which implies that $\frac{m}{(p,m)}\mid d$. \end{proof}

 Finally we will discuss the spectral symmetry of generalized power hypergraphs, and
show that for an arbitrarily given positive integer $m$ and any positive integer $\ell $ with $\ell |m$, there exists an $m$-uniform hypergraph $G$ with $c(G)=\ell$.

\begin{lemma} \label{powerB}
Let $G$ be a simple bipartite graph, and let $m \ge 4$ be an even integer.
Then $G^{m,\frac{m}{2}}$ is spectral $m$-cyclic.
\end{lemma}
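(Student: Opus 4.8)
The plan is to exhibit an explicit $(m,m)$-coloring of $G^{m,\frac{m}{2}}$ in the sense of Definition~\ref{spe-ell-sym}, deduce spectral $m$-symmetry from Lemma~\ref{col-sym}, and then promote this to spectral $m$-cyclicity by means of Corollary~\ref{ell-m}. Throughout set $H:=G^{m,\frac{m}{2}}$ and fix a bipartition $V(G)=A\cup B$ of the bipartite graph $G$, so that every edge of $G$ has exactly one end in $A$. Specializing Definition~\ref{powerhyper} to $t=2$ and $s=\frac{m}{2}$ (so that $m-ts=0$ and there are no ``edge vertices''), the vertex set of $H$ is $\bigcup_{v\in V(G)}\mathbf{v}$ with each $\mathbf{v}=\{v^{(1)},\dots,v^{(m/2)}\}$ an $\frac{m}{2}$-set, and the edges of $H$ are exactly the sets $\mathbf{u}\cup\mathbf{w}$ with $\{u,w\}\in E(G)$.

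Next I would define $\phi:V(H)\to[m]$ by: for $v\in B$, put $\phi(v^{(j)})=m$ for all $1\le j\le\frac{m}{2}$; for $v\in A$, put $\phi(v^{(1)})=1$ and $\phi(v^{(j)})=m$ for $2\le j\le\frac{m}{2}$. This is well defined since $1,m\in[m]$, and (crucially) it is not constant on the blow-up sets over $A$. For any edge $\mathbf{u}\cup\mathbf{w}$ of $H$ we may assume $u\in A$ and $w\in B$, and then the sum of the $m$ colors on this edge is
\[
\left(1+\left(\frac{m}{2}-1\right)m\right)+\frac{m}{2}\cdot m = 1+(m-1)m \equiv 1 \pmod m.
\]
Since $1=\frac{m}{m}$, this is precisely condition~(\ref{gen-col}) with $\ell=m$, so $\phi$ is an $(m,m)$-coloring of $H$; hence $H$ is spectral $m$-symmetric by Lemma~\ref{col-sym}.

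It remains to identify $c(H)$. Assume $G$ has at least one edge (otherwise the claim is vacuous). Then $H$ is an $m$-uniform hypergraph with $|E(H)|=|E(G)|\ge1$, so Corollary~\ref{ell-m}, applied with $\ell=c(H)$, yields $c(H)\mid m$. On the other hand $H$ is spectral $m$-symmetric, so $m$ is among the integers $\ell$ for which $\S(\A(H))=e^{\i\frac{2\pi}{\ell}}\S(\A(H))$, whence $c(H)\ge m$. Therefore $c(H)=m$, i.e.\ $G^{m,\frac{m}{2}}$ is spectral $m$-cyclic.

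I do not expect a real obstacle: the argument is driven entirely by the choice of coloring, and the only pitfall is the tempting but useless attempt to color each blow-up set $\mathbf{v}$ by a single color $\phi(v)$ — that makes every edge-sum equal to $\frac{m}{2}(\phi(u)+\phi(w))$, which for $m\ge4$ is congruent to $0$ or $\frac{m}{2}$ modulo $m$ and never to $1$, forcing one to split the colors inside a single blow-up set as above. Once that is seen, verifying~(\ref{gen-col}) is the one-line congruence displayed above, and the identification of the cyclic index is a formal consequence of Corollary~\ref{ell-m}.
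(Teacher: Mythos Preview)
Your proof is correct but takes a genuinely different route from the paper. The paper argues via the trace characterization: it shows directly that whenever $\Tr_d\bigl(G^{m,m/2}\bigr)\ne 0$ one has $m\mid d$, by picking one representative vertex from each blow-up set over one side of the bipartition and observing that every edge of $G^{m,m/2}$ meets this set in exactly one vertex, so the degree sum over that set equals $d$ while each degree is a multiple of $m$; then Corollary~\ref{div} gives $m\mid c(G^{m,m/2})$ and Corollary~\ref{ell-m} gives the reverse divisibility. Your argument instead exhibits an explicit $(m,m)$-coloring (color one distinguished vertex in each $A$-blow-up by $1$, all others by $m$) and invokes Lemma~\ref{col-sym} for spectral $m$-symmetry, then uses Corollary~\ref{ell-m} to pin down $c(H)=m$. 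In effect both proofs exploit the same structural fact---$G^{m,m/2}$ is $1$-hm bipartite with respect to the set of distinguished $A$-representatives---but the paper routes this through traces (in the spirit of Theorem~\ref{gener}) while you route it through the coloring framework of Section~3. Your approach is a bit more self-contained, avoiding the trace formula~(\ref{tracedF}) entirely, and it has the bonus of producing an explicit diagonal matrix $D\in\Dg^{(1)}$ witnessing the symmetry.
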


\begin{proof}
By Corollaries \ref{div} and \ref{ell-m}, it suffices to prove that if $\Tr_d(G^{m,\frac{m}{2}}) \ne 0$, then $m \mid d$.
Let $V(G) =V_1 \cup V_2$ be a bipartition of $V(G)$ such that each edge of $G$ intersects both $V_1$ and $V_2$,
 which naturally corresponds a bipartition $V(G^{m,\frac{m}{2}})=\mathbf{V}_1 \cup \mathbf{V}_2$,
 where $\mathbf{V}_i$ is obtained from $V_i$ by blowing each vertex $v$ into an $\frac{m}{2}$-set $\v$ for $i \in [2]$.
So we may assume that $V_i \subset \mathbf{V}_i$ for $i \in [2]$.
Suppose that $\Tr_d(G^{m,\frac{m}{2}}) \ne 0$.
By (\ref{tracedF}) and (\ref{F}), we have $d$ edges
$F=\{e_1(i_1), \ldots, e_d(i_d)\} \in \F_d(G^{m,\frac{m}{2}})$.
Let $H$ be the sub-hypergraph of $G^{m,\frac{m}{2}}$ induced by the edges in $F$.
Then
\[
\sum_{v \in V_1 \cap V(H)} d_H(v)=d.
\]
As each vertex in $H$ is $m$-valent, we have $m \mid d$.
 \end{proof}

\begin{lemma} \label{equiv}
Let $G$ be a $t$-uniform hypergraph and let $G^{m,\frac{m}{t}}$ be the generalized power of $G$, where $t|m$.
Then $c(G^{m,\frac{m}{t}})=\frac{m}{t} \cdot l$, where $l$ is a positive integer such that $l \mid c(G)$.
\end{lemma}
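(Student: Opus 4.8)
The plan is to compute $c(H)$, where $H:=G^{m,\frac{m}{t}}$, through the cyclic‑index formula of Corollary~\ref{div}, $c(\mathcal{B})=\gcd\{d:\Tr_d(\mathcal{B})\ne 0\}$, combined with the graph‑theoretic trace formula (\ref{tracedF})--(\ref{F}) and, for the last step, the coloring characterization of spectral symmetry. Put $s:=\frac{m}{t}$ and write $V(H)=\bigcup_{v\in V(G)}\v$ for the blow‑up, each $\v$ an $s$‑set; since $m-ts=0$, Definition~\ref{powerhyper} shows the edge‑sets $\mathbf{e}$ are empty, so $E(G)$ and $E(H)$ are in bijection via $e=\{u_1,\dots,u_t\}\mapsto \u_1\cup\cdots\cup\u_t$. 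The elementary fact underlying everything is that a vertex $w\in\v$ lies in the $H$‑edge corresponding to a $G$‑edge $\bar e$ if and only if $v\in\bar e$; hence for any multiset $F$ of edges of $H$, with $F'$ the corresponding multiset of edges of $G$, one has $\deg_F(w)=\deg_{F'}(v)$ for every $w\in\v$.

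I would first prove $s\mid c(H)$. If $\Tr_d(\A(H))\ne 0$, then by (\ref{tracedF})--(\ref{F}) some summand is nonzero, so there is a multiset $F$ of $d$ edges of $H$ with $\Pi_F(\A(H))$ $m$‑valent, i.e.\ $m\mid\deg_F(w)$ for every vertex $w$ of $H$ occurring in $F$. By the fact above, $m\mid\deg_{F'}(v)$ for every $v$ occurring in the corresponding multiset $F'$ of $G$‑edges; summing over $v$ and using that each of the $d$ edges of $G$ has exactly $t$ vertices gives $\sum_v\deg_{F'}(v)=td$, so $m\mid td$ and therefore $s=\frac{m}{t}$ divides $d$. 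As this holds for every such $d$, $s\mid c(H)$; write $c(H)=sl$ with $l$ a positive integer. Since $H$ is $m$‑uniform, Corollary~\ref{ell-m} gives $sl=c(H)\mid m=st$, hence $l\mid t$.

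It remains to show $l\mid c(G)$; we may assume $l\ge 2$, otherwise there is nothing to prove. Assuming $G$ connected (so that $\A(H)$ is weakly irreducible), $H$ is spectral $sl$‑symmetric with $sl\ge 2$, so Theorem~\ref{sym-col-graph} gives an $(m,sl)$‑coloring $\phi\colon V(H)\to[m]$; thus every edge $\u_1\cup\cdots\cup\u_t$ of $H$ satisfies $\sum_{i=1}^{t}\bigl(\sum_{w\in\u_i}\phi(w)\bigr)\equiv \frac{m}{sl}=\frac{t}{l}\pmod m$. Define $\psi\colon V(G)\to[t]$ by $\psi(v)\equiv\sum_{w\in\v}\phi(w)\pmod t$. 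Reducing the previous congruence modulo $t$ (legitimate because $t\mid m$) shows every edge $\{u_1,\dots,u_t\}$ of $G$ satisfies $\psi(u_1)+\cdots+\psi(u_t)\equiv\frac{t}{l}\pmod t$, i.e.\ $\psi$ is a $(t,l)$‑coloring of $\A(G)$ (valid since $l\mid t$ and $l\ge 2$). By Lemma~\ref{col-sym}, $G$ is spectral $l$‑symmetric, so $l\mid c(G)$. Combining the two parts, $c\bigl(G^{m,\frac{m}{t}}\bigr)=c(H)=sl=\frac{m}{t}\cdot l$ with $l\mid c(G)$.

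I expect the point needing most care to be the first step: one should invoke ``some summand of (\ref{tracedF}) is nonzero'' only through its $m$‑valence consequence — which the excerpt records directly — so that the more delicate Eulerian/closed‑walk condition $|\W(F)|\ne 0$ never has to be tracked through the blow‑up; everything else is bijective bookkeeping and elementary modular arithmetic. A secondary point is the standing connectivity of $G$ (used so that $\A(H)$ is weakly irreducible and Theorem~\ref{sym-col-graph} applies); if one wished to drop it, the coloring step can be replaced by the trace estimate $\Tr_d(\A(G))\ne 0\Rightarrow\Tr_{sd}(\A(H))\ne 0$, obtained by taking $s$ copies of each edge of the witnessing multiset and distributing the primary indices over each $\v$ — there verifying $|\W|\ne 0$ for the resulting configuration (equivalently, that balance of the associated digraph is preserved) becomes the fussy part.
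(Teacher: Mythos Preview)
Your proposal is correct (under the connectivity hypothesis you flag) and takes a genuinely different route from the paper for the key divisibility $l\mid c(G)$. The paper obtains $l\mid c(G)$ via traces: from an $m$-valent Eulerian $F=\{e_1(i_1),\dots,e_d(i_d)\}\in\F_d(G)$ it builds $\bar F\in\F_{ds}(H)$ by listing each blown-up edge $\bar e_j$ exactly $s$ times, once with each primary index from the blow-up class of $i_j$, and then spends the bulk of the proof checking that $D(\bar F)$ is connected and balanced (hence Eulerian), so that $\Tr_d(G)\ne 0\Rightarrow\Tr_{ds}(H)\ne 0$ and therefore $l=\gcd\{d:\Tr_{ds}(H)\ne 0\}$ divides $\gcd\{d:\Tr_d(G)\ne 0\}=c(G)$. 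This is precisely the ``fussy'' alternative you sketch at the end. Your main argument instead pulls back an $(m,sl)$-coloring of $H$ (from Theorem~\ref{sym-col-graph}) to a $(t,l)$-coloring of $G$ by summing over each blow-up class and reducing mod $t$, then invokes Lemma~\ref{col-sym}; this is shorter and sidesteps the digraph bookkeeping entirely, at the cost of requiring $G$ connected so that $H$ is connected and Theorem~\ref{sym-col-graph} applies. For the first step $s\mid c(H)$, the paper's packaging is also slightly different---it observes that $H$ is $t$-hm bipartite and quotes Theorem~\ref{gener}---but this is the same $m$-valence/degree-sum computation you carry out directly.
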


\begin{proof}
Let $s:=\frac{m}{t}$.
Note that each edge $e=\{i_1,\ldots,i_t\}$ of $G$ is $1$-$1$ corresponding to the edge
  $\bar{e}=S_{i_1} \cup \cdots S_{i_t}$ of $G^{m,s}$, where $S_{i_j}=\{i_{j1},\ldots,i_{js}\}$ for $j \in [t]$.
Hence we give a labeling of the vertices of $G^{m,s}$.
 So, if we choose $i_{11},\ldots,i_{t1}$ from each edge $\bar{e}$ of $G^{m,s}$ with the above expression and form a set $U$, then every edge of $G^{m,s}$
  intersects $U$ with exactly $t$ vertices, which implies that $G^{m,s}$ is a $t$-hm bipartite hypergraph.
 By Theorem \ref{gener}, $G^{m,s}$ is spectral $\frac{m}{(m,t)}$-symmetric or spectral $s$-symmetric.
By Corollary \ref{div}, if $\Tr_{\bar{d}}(G^{m,s}) \ne 0$, then $s|\bar{d}$.
So, it is enough to consider the trace $\Tr_{ds}(G^{m,s})$ for a general positive integer $d$.

Suppose $\Tr_d(G) \ne 0$.
By (\ref{tracedF}) and (\ref{F}), each nonzero (positive) term of $\Tr_d(G)$ is associated with $d$ edges such that
$$ F=\{e_1(i_1), \ldots, e_d(i_d)\} \in \F_d(G),$$ and $D(F)$ is a Eulerian directed graph, the latter of which also implies that
each index occurring in $F$ will be a primary index and a secondary index as well.
Correspondingly, we have $ds$ edges of $G^{m,s}$ such that
\[\label{construct}\bar{F}=\{\bar{e}_{1}(i_{11}), \ldots, \bar{e}_{1}(i_{1s}),\ldots,\bar{e}_d(i_{d1}), \ldots, \bar{e}_d(i_{ds})\} \in \F_{ds}(G^{m,s}).\]
Here each edge $\bar{e}_j$ occurs $s$ times but with different primary index for $j \in [d]$.

Now we consider the directed graph $D(\bar{F})$.
For each fixed $j \in [s]$, let $D_j(\bar{F})$ be the subgraph of $D(\bar{F})$ induced by all possible vertices $i_{pj}$ if $i_p$ occurs in $F$,
 or equivalently, $D_j(\bar{F})$ uses the edges arising from $F_j=\{\bar{e}_1(i_{1j}), \ldots, \bar{e}_d(i_{dj})\}$ and the vertices of form $i_{pj}$ contained in those edges.
The vertex sets of $D_1(\bar{F}), \ldots, D_t(\bar{F})$ form a partition of $D(\bar{F})$, i.e.
\[V(D(\bar{F}))=V(D_1(\bar{F})) \cup \cdots \cup V(D_s(\bar{F})).\]
By the construction of $D(F),D(\bar{F})$ and $G^{m,s}$, for each $j \in [s]$,
  $D_j(\bar{F})$ is isomorphic to $D(F)$, which implies that $D_j(\bar{F})$ contains a Eulerian directed circuit.

For any fixed $j \in [s]$, we assert that in the directed graph $D(\bar{F})$, each vertex of $D_j(\bar{F})$ has the same number of in-neighbors and out-neighbors outside $D_j(\bar{F})$.
We will prove the assertion by considering each primary index $i_{pj}$ occurring in $\bar{F}$ for $p \in [d]$, including multiplicity, which means
if considering $i_{pj}$ as a vertex, the (out- or in-)degree of $i_{pj}$ is the sum of (out- or in-)degrees of the index $i_{pj}$ for each appearance of $i_{pj}$ (thinking of all $i_{pj}$ being distinct).
Observe that $i_{pj}$ has $t(s-1)$ out-neighbors outside $D_j(\bar{F})$ arising from the edge $\bar{e}_p(i_{pj})$,
where for each $l \in [s]\backslash \{j\}$, $i_{pj}$ has $t$ out-neighbors in $D_l(\bar{F})$.
As $D_j(\bar{F})$  contains a Eulerian directed circuit and $i_{pj}$ has exactly $t-1$ out-neighbors in $D_j(\bar{F})$ arising from the edge $\bar{e}_p(i_{pj})$,
 $i_{pj}$ also has exactly $t-1$ in-neighbors in $D_j(\bar{F})$, say $i_{q_1j},\ldots,i_{q_{t-1}j}$,
   corresponding to $t-1$ edges $\bar{e}_{q_1}(i_{q_1j}),\ldots,\bar{e}_{q_{t-1}}(i_{q_{t-1}j})$, each of which contains $i_{pj}$ as a secondary index.
So, by the construction of $\bar{F}$, for each $l \in [t-1]$, we have $s-1$ edges $\bar{e}_{q_l}(i_{q_l\tilde{j}})$'s for $\tilde{j} \in [s]\backslash \{j\}$,
  each of which also contains $i_{pj}$ as a secondary index.
In addition, for each $\tilde{j} \in [s]\backslash \{j\}$, the edge $\bar{e_p}(i_{p\tilde{j}})$'s contains $i_{pj}$ as a secondary index.
So, $i_{pj}$ has $(t-1)(s-1)+(s-1)=t(s-1)$ in-neighbors outside $D_j(\bar{F})$.

By the above discussion, we know $D(\bar{F})$ is weakly connected (considering its underlying undirected graph), and
each vertex of $D(\bar{F})$ has the same in-degree and out-degree, which implies that
 $D(\bar{F})$ is Eulerian, and hence $\Tr_{ds}(G^{m,s}) \ne 0$ by (\ref{tracedF}).
By Corollary \ref{div},
\begin{equation}
c(G^{m,s})=\hbox{g.c.d.} \{ds: \Tr_{ds}(G^{m,s}) \ne 0\}= s \cdot \hbox{g.c.d.} \{d: \Tr_{ds}(G^{m,s}) \ne 0\}.
\end{equation}
Let $l:=\hbox{g.c.d.} \{d: \Tr_{ds}(G^{m,s}) \ne 0\}$.
By what we have proved, if $\Tr_d(G) \ne 0$, then $\Tr_{ds}(G^{m,s}) \ne 0$, implying that $l | c(G)$.
 \end{proof}

If $c(G)=1$, then $c(G^{m,s})=s \cdot c(G)$ by Lemma \ref{equiv}. We pose the following conjecture.

\begin{conj}\label{conj}
\begin{equation} c(G^{m,s})=s \cdot c(G). \end{equation}
\end{conj}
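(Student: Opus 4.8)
\emph{The plan} is to reduce Conjecture~\ref{conj}, by means of Lemma~\ref{equiv} and Corollary~\ref{div}, to a statement purely about generalized traces, and then to attack that statement by projecting Eulerian multidigraphs on $G^{m,s}$ down to $G$. Write $s=\frac{m}{t}$. Lemma~\ref{equiv} already gives $c(G^{m,s})=s\cdot l$ with $l=\hbox{g.c.d.}\{d:\Tr_{ds}(G^{m,s})\ne 0\}$ and $l\mid c(G)$, while Corollary~\ref{div} gives $c(G)=\hbox{g.c.d.}\{d:\Tr_d(G)\ne 0\}$. Hence it suffices to prove the reverse divisibility $c(G)\mid l$, and for this it is enough to establish the converse of the implication proved in Lemma~\ref{equiv}, namely
\[ \Tr_{ds}(G^{m,s})\ne 0 \ \Longrightarrow\ \Tr_d(G)\ne 0 \qquad (d\ge 1); \]
this would make the index sets $\{d:\Tr_d(G)\ne 0\}$ and $\{d:\Tr_{ds}(G^{m,s})\ne 0\}$ coincide, so the conjecture is in fact equivalent to this converse. (The weaker conclusion that $c(G)\mid d$ would already suffice.)

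To prove the converse I would start from a nonzero summand of $\Tr_{ds}(G^{m,s})$ in (\ref{tracedF}), say $\bar F=\{\bar e_1(w_1),\ldots,\bar e_{ds}(w_{ds})\}\in\F_{ds}(G^{m,s})$ with $D(\bar F)$ Eulerian and every vertex $m$-valent. Since $s=m/t$, the hypergraph $G^{m,s}$ is the pure blow-up of $G$: each $v\in V(G)$ becomes an $s$-set $S_v$ and each edge $e=\{v_1,\ldots,v_t\}$ becomes the single edge $\bar e=S_{v_1}\cup\cdots\cup S_{v_t}$. Project $\bar F$ by sending $\bar e$ to its base edge $e$ and each primary index in $S_v$ to $v$, and let $N_e$ be the resulting multiplicity of $e$, so $\sum_e N_e=ds$. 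Counting occurrences of a vertex of $G^{m,s}$ exactly as in the proof of Lemma~\ref{equiv} shows that each $w\in S_v$ occurs $\sum_{e\ni v}N_e$ times in $\bar F$, and that this number equals $m\,p_w$, where $p_w$ is the number of primary occurrences of $w$; hence the $p_w$ with $w\in S_v$ are all equal, say $=p_v$, and $\sum_{e\ni v}N_e=m\,p_v=ts\,p_v$ for every $v\in V(G)$. If each $N_e$ were divisible by $s$ \emph{and} the primary indices in $\bar F$ were distributed compatibly with the sets $S_v$, one could split $\bar F$ into $s$ copies of a single $F\in\F_d(G)$ with $d=\sum_e(N_e/s)$ edges, each vertex $t$-valent with $p_v$ primary occurrences, whose directed graph $D(F)\cong D_j(\bar F)$ is Eulerian; then $\Tr_d(G)\ne 0$ by (\ref{tracedF}) and we would be done.

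The whole difficulty is thus concentrated in these two linked points, and neither is forced by the Eulerian condition alone: the $s$ copies in a set $S_v$ may serve as primary indices with unequal multiplicities across different base edges, so $E(\bar F)$ need not split into $s$ congruent Eulerian pieces coming from $G$. The natural remedy is to bring in the group of permutations of $V(G^{m,s})$ that permute the copies within each $S_v$; this group fixes $G^{m,s}$, hence preserves $\Tr_{ds}$, so one may hope to replace $\bar F$ by a symmetrized family for which $s\mid N_e$ holds and the split exists. I expect that making this passage rigorous --- turning invariance at the trace (eigenvalue) level into an honest reorganization of the combinatorial data $\bar F$ --- is the main obstacle, and the reason the statement is left as a conjecture rather than a theorem. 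A possibly cleaner alternative would be to find a direct relation between the characteristic polynomials $\varphi_{G^{m,s}}$ and $\varphi_G$, or between their coefficients $a_d$, generalizing the bipartite special case treated in Lemma~\ref{powerB}; such a relation would deliver the spectral symmetry without any of the combinatorial bookkeeping above.
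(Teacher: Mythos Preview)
The statement you are trying to prove is labelled \textbf{Conjecture}~\ref{conj} in the paper, and the paper does \emph{not} supply a proof; it is explicitly left open, with only the partial information of Lemma~\ref{equiv} (that $c(G^{m,s})=s\cdot l$ for some $l\mid c(G)$) and the verified special cases $c(G)=1$ (Corollary~\ref{powerNB}) and $G$ bipartite (Lemma~\ref{powerB}). So there is no ``paper's own proof'' to compare against.

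Your reduction is sound: by Lemma~\ref{equiv} and Corollary~\ref{div} the conjecture is equivalent to showing $c(G)\mid d$ whenever $\Tr_{ds}(G^{m,s})\ne 0$, and for this it would certainly suffice to prove the converse implication $\Tr_{ds}(G^{m,s})\ne 0\Rightarrow\Tr_d(G)\ne 0$. Your projection analysis is also correct: for any $\bar F\in\F_{ds}(G^{m,s})$ with $D(\bar F)$ Eulerian, every $w\in S_v$ has the same number $p_v$ of primary occurrences and $\sum_{e\ni v}N_e=mp_v$. But, as you yourself observe, nothing in this forces $s\mid N_e$ edge by edge, nor a coherent assignment of primary vertices after projection, so the construction of a corresponding $F\in\F_d(G)$ with $D(F)$ Eulerian does not go through. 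Your symmetrization idea (averaging over permutations within each $S_v$) acts on the trace value, not on individual summands, so it does not by itself produce a single Eulerian $\bar F$ of the desired shape; one would need a further argument that some term in the symmetrized sum survives, which is exactly the missing step. In short, your proposal correctly isolates the obstruction but does not overcome it, and the paper is in the same position: this is genuinely an open problem in the paper, not a gap in your write-up.
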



\begin{coro} \label{powerNB}
Let $G$ be a simple non-bipartite graph, and let $m \ge 4$ be an even integer.
Then $G^{m,\frac{m}{2}}$ is spectral $\frac{m}{2}$-cyclic.
\end{coro}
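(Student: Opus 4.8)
The plan is to deduce the statement from Lemma \ref{equiv} together with the elementary fact that a non-bipartite graph has cyclic index $1$. Since $G$ is a simple graph it is $2$-uniform, and $2 \mid m$ because $m$ is even; moreover $m \ge 4 > 2$, so the generalized power $G^{m,\frac{m}{2}}$ is defined and Lemma \ref{equiv} applies with $t=2$. It gives $c(G^{m,\frac{m}{2}})=\frac{m}{2}\cdot l$ for some positive integer $l$ with $l \mid c(G)$. Hence it suffices to prove $c(G)=1$, for then $l=1$ and $c(G^{m,\frac{m}{2}})=\frac{m}{2}$, i.e. $G^{m,\frac{m}{2}}$ is spectral $\frac{m}{2}$-cyclic.

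To show $c(G)=1$ I would invoke the trace characterization of the cyclic index from Corollary \ref{div}, namely $c(\A(G))=\gcd\{d:\Tr_d(\A(G))\ne 0\}$. In the matrix case $m=2$, formula (\ref{traceF}) reads $\Tr_d(\A(G))=\Tr(\A(G)^d)$, which (up to normalization) counts closed walks of length $d$ in $G$. Since $G$ is non-bipartite it contains an odd cycle, say of length $2k+1$; traversing it yields a closed walk, so $\Tr_{2k+1}(\A(G))>0$. Also $G$ has at least one edge, so $\Tr_2(\A(G))=2|E(G)|>0$ (equivalently, $\Tr_2(\A(G))=\sum_i\lambda_i^2\ge\rho(\A(G))^2>0$). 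Therefore $c(G)$ divides $\gcd(2,2k+1)=1$, whence $c(G)=1$. Combining the two steps completes the proof; note that no connectivity hypothesis on $G$ is needed, since the odd-cycle and edge arguments only use non-bipartiteness.

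I do not expect a serious obstacle here. The only points requiring a little care are checking that the hypotheses of Lemma \ref{equiv} are met ($t\mid m$ and $m>t$, both immediate) and that the identity $\Tr_d=\Tr(A^d)$ is legitimate in the matrix case so that Corollary \ref{div} can be used verbatim. As an alternative one could argue that the adjacency matrix of a connected non-bipartite graph is primitive, so $c(G)=1$ by the Perron--Frobenius theorem for matrices, and then reduce a disconnected $G$ to its components; but the trace route is cleaner and uniform. It is also worth remarking that this argument is the exact counterpart of Lemma \ref{powerB}: there $G$ bipartite forces $c(G)=2$, hence $l=2$ and $c(G^{m,\frac{m}{2}})=m$, whereas here $G$ non-bipartite forces $c(G)=1$, hence $l=1$ and $c(G^{m,\frac{m}{2}})=\frac{m}{2}$.
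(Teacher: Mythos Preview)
Your main argument is correct and matches the paper's proof exactly: invoke Lemma \ref{equiv} with $t=2$, then use $c(G)=1$ for non-bipartite $G$ to force $l=1$. The paper simply asserts $c(G)=1$ as known, whereas you supply a justification via Corollary \ref{div} and closed-walk counts; either is fine.

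One correction to your closing remark: the bipartite case is \emph{not} the exact counterpart via Lemma \ref{equiv}. That lemma only yields $l \mid c(G)$, so for bipartite $G$ with $c(G)=2$ one gets $l\in\{1,2\}$, not $l=2$ automatically. This is precisely why the paper proves Lemma \ref{powerB} by a separate direct argument and leaves the general equality $c(G^{m,s})=s\cdot c(G)$ as Conjecture \ref{conj}.
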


\begin{proof}
As $G$ is non-bipartite, $c(G)=1$.
By Lemma \ref{equiv}, $c(G^{m,\frac{m}{2}})=\frac{m}{2} \cdot l$, where $l \mid c(G)$.
So $l=1$, and hence $c(G^{m,\frac{m}{2}})=\frac{m}{2}$.
 \end{proof}

\begin{coro}
Let $m \ge 2$ be an integer.
Then for any positive integer $\ell$ with $\ell |m$, there exists an $m$-uniform hypergraph $G$ such
that $G$ is spectral $\ell$-cyclic.
\end{coro}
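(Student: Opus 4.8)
The plan is to reduce the statement to machinery already in hand---Corollary~\ref{simplex} (a simplex kills spectral symmetry), Lemma~\ref{equiv} (a generalized power multiplies the cyclic index by $m/t$, up to a divisor of the cyclic index of the base), and Corollary~\ref{ell-m} ($\ell\mid m$ for any spectral $\ell$-symmetric $m$-uniform hypergraph)---by exhibiting, for each divisor $\ell$ of $m$, an explicit $m$-uniform hypergraph of cyclic index exactly $\ell$. I would treat the ``generic'' range $1<\ell<m$ by a generalized power and handle the two boundary divisors $\ell=1$ and $\ell=m$ separately.

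For $1<\ell<m$ put $t=m/\ell$. Since $\ell\mid m$ and $1<\ell<m$, we have $2\le t<m$ and $t\mid m$, so for any $t$-uniform hypergraph $G_0$ the generalized power $G_0^{m,m/t}$ is defined. I would take $G_0$ to be the complete $t$-uniform hypergraph on $t+1$ vertices, which contains a simplex; then $c(G_0)=1$ by Corollary~\ref{simplex}. Applying Lemma~\ref{equiv} to $G=G_0^{m,m/t}$ gives $c(G)=\frac{m}{t}\cdot l$ with $l\mid c(G_0)=1$, so $l=1$ and $c(G)=\frac{m}{t}=\ell$; thus $G$ is spectral $\ell$-cyclic.

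For $\ell=1$ I would simply take $G$ to be the complete $m$-uniform hypergraph on $m+1$ vertices; it contains a simplex, so $c(G)=1$ by Corollary~\ref{simplex}. For $\ell=m$ the generalized-power route degenerates (it would require $t=1$), so I would instead take $G$ to be a single $m$-edge $\{v_1,\ldots,v_m\}$. Its adjacency tensor $\A(G)$ is symmetric and weakly irreducible (the digraph $D(\A(G))$ is the complete digraph on $m$ vertices), with $\rho(\A(G))=1$. Multiplying the eigen-equation $\A(G)x^{m-1}=\lambda x^{[m-1]}$ through by $x_i$ yields $\lambda x_i^{m}=x_1\cdots x_m$ for every $i$; a short argument then shows that every nonzero eigenvalue is an $m$-th root of unity and, conversely, that every $m$-th root of unity is an eigenvalue (take the eigenvector $(\omega^k,1,\ldots,1)$ with $\omega=e^{\i 2\pi/m}$). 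Hence $G$ has exactly $m$ distinct eigenvalues of modulus $\rho(\A(G))$, and Theorem~\ref{PF3} gives $c(G)=m$. (Alternatively: $G$ is $m$-partite, hence spectral $m$-symmetric, while Corollary~\ref{ell-m} forces $c(G)\mid m$, so $c(G)=m$.)

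The real content of the corollary is packaged in Lemma~\ref{equiv}; what remains is bookkeeping, the only substantive input being that in every uniformity $t\ge 2$ there is a connected base hypergraph of cyclic index $1$, which Corollary~\ref{simplex} supplies through the simplex. Consequently I expect the only spot demanding genuine care to be the two extremal divisors, where the generalized-power construction is unavailable: there one must fall back on the simplex itself (for $\ell=1$) and on a single edge (for $\ell=m$), the latter requiring the short direct eigenvalue analysis sketched above.
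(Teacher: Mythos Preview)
Your proposal is correct and, for $\ell=1$ and $1<\ell<m$, matches the paper's proof exactly: both invoke Corollary~\ref{simplex} to get a $t$-uniform base of cyclic index~$1$ and then apply Lemma~\ref{equiv} to the generalized power $G_0^{m,m/t}$.

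The only divergence is at $\ell=m$. The paper handles this case by citing Lemma~\ref{powerB}, taking $G^{m,m/2}$ for a simple bipartite graph $G$; you instead take a single $m$-edge and argue either directly from the eigen-equations or via the observation that a single edge is $m$-partite (hence spectral $m$-symmetric) together with $c(G)\mid m$ from Corollary~\ref{ell-m}. Your route is self-contained and works uniformly for every $m\ge 2$, whereas Lemma~\ref{powerB} as stated in the paper is formulated only for even $m\ge 4$; in this respect your argument is cleaner. The paper's approach, on the other hand, keeps the entire proof within the generalized-power framework and avoids any ad~hoc eigenvalue computation.
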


\begin{proof}
Let $m =\ell t$.
If $\ell=1$, any $m$-uniform hypergraph with a simplex is as desired by Corollary \ref{simplex}.
If $\ell=m$, then the hypgergraph $G^{m,\frac{m}{2}}$ by taking $G$ be any bipartite graph is spectral $m$-cyclic by Corollary \ref{powerB}.
Otherwise, $2 \le t \le \frac{m}{2}$.
Let $G$ be a $t$-uniform hypgergraph (maybe simple graph) containing a simplex.
Then $G$ is spectral $1$-cyclic by Corollary \ref{simplex}.
By Theorem \ref{equiv}, $G^{m,\ell}$ is spectral $\ell$-cyclic. \end{proof}

\section{Conclusions}

It is known that the Earth has two kinds of movement: one is the rotation around its own axis, the other is the travel around the Sun.
Let $\A$ be a weakly irreducible nonnegative tensor of order $m$ and dimension $n$.
Governed by the group $\Dg$ defined in (\ref{Dg}) (taking $\ell=c(\A)$),
the ``movement'' of $\A$ has a similar behavior: one is self-rotating determined by $\Dg^{(0)}$ with ``period'' $s(\A)$, the other is
traveling through the orbit $S=\{e^{\i \frac{2 \pi i}{\ell}}\A: i=0,1,\ldots,c(\A)-1\}$ determined by $\Dg \backslash \Dg^{(0)}$ with ``period'' $c(\A)$.
So, we have two important parameters $s(\A)$ and $c(\A)$, from which we know the structural information of $\A$.
If $m=2$ or $\A$ is an irreducible nonnegative matrix, then $s(\A)=1$.
But for the case of $m \ge 3$, it will happen that $s(\A) \ge 2$; see Examples \ref{exa1} and \ref{exa2}.
So it will be interesting to investigate $s(\A)$ and get more structural information of $\A$.

%
%
%
%
%

\bibliographystyle{amsplain}

\end{document}